\def\wt{\widetilde}
\def\wh{\widehat}
\def\ov{\overline}
\def \im{{\rm Im\,}}
 \def\up{\upharpoonright}
\def\cA{\mathcal A} \def\cH{\mathcal H}  \def\cB{\mathcal B} \def\cC{\mathcal C}
\def\cD{\mathcal D} \def\cE {\mathcal E}\def\cF {\mathcal F}   \def\cJ {\mathcal J}
\def\cK{\mathcal K} \def\cL{\mathcal L}
 \def\cN{\mathcal N} \def\cP{\mathcal P}
  \def\cS{\mathcal S}
\def\cU{\mathcal U} 
 \def\cT{\mathcal T} \def\cI{\mathcal I}
\def\B{\mbox{\boldmath$B$}}
\def \gH{\mathfrak H}   \def \gN{\mathfrak N}
\def \bC{\mathbb C} \def \bN{\mathbb N} 
\def\bR{\mathbb R}
\def \l{\lambda}
\def \a{\alpha} \def \b{\beta}   \def \L{\Lambda}  \def \s{\sigma} \def \t{\theta}
\def\d {\delta}
\def \f{\varphi} \def\D {\Delta} \def\Si{\Sigma}
 \def \G{\Gamma} 
\def \C{\widetilde {\mathcal C}}
 \def \CAt {C(\wt A_\tau)}
\def \cd {\cdot}
\def\lI {\cL_\Delta^2(\cI;\bC^n)} \def\LI {L_\Delta^2(\cI;\bC^n)}
\def\lIW {\cL_\Delta^2(\cI;\bC^m)} \def\LIW {L_\D^2(\cI;\bC^m)}
\def\AC {AC(\cI, \bC^n)} \def\ACm {AC(\cI, \bC^m)}
\def\lS {\cL^2(\s; \bC^p)} \def\LS {L^2(\s ; \bC^p)}
\def\lSm {\cL^2(\s; (\bC^m)^r)} 
\def\tma{\cT_{\max}} \def\tmi{\cT_{\min}} \def\Tma{T_{\max}} \def\Tmi{T_{\min}}
\def\sma{\cS_{\max}}  \def\Sma{S_{\max}} \def\Smi{S_{\min}}
 \def\Tma{T_{\max}}
 \def \cex {\overline {\rm ext }(A)}
\def\CAt{C(\wt A_\tau)}
\def \dom {{\rm dom}\,}  \def \ran {{\rm ran}\,}  \def \ker{{\rm
ker\,}}
 \def \mul {{\rm mul}\,}
\def  \RH {\wt R (\cH)}  
\def \CR {\bC\setminus\bR}
\def\bt{\{\cH,\G_0,\G_1\}}
\newcommand{\Romannumeral}[1]{\uppercase\expandafter{\romannumeral #1\relax}}
\newtheorem{theorem}{Theorem}[section]
\newtheorem{proposition}[theorem]{Proposition}
\newtheorem{corollary}[theorem]{Corollary}
\newtheorem{lemma}[theorem]{Lemma}
\newtheorem{assertion}[theorem]{Assertion}
\theoremstyle{definition}
\theoremstyle{definition}
\newtheorem {definition} [theorem]{Definition}
\theoremstyle{remark}
\newtheorem{remark}[theorem]{Remark}
\numberwithin{equation}{section}
\begin{document}
\title[On uniform convergence of the  inverse Fourier transform]
{On uniform convergence of the inverse Fourier transform for differential equations and Hamiltonian systems with degenerating  weight}
\author{Vadim  Mogilevskii}
\address{Department of Mathematical Analysis and Informatics, Poltava National V.G. Korolenko Pedagogical University,  Ostrogradski Str. 2, 36000 Poltava, Ukraine }
\email{vadim.mogilevskii@gmail.com}

\subjclass[2010]{34B09,34B40,34L10,47A06,47E05}
\keywords{Spectral function, pseudospectral function, generalized Fourier transform, uniform convergence}
\begin{abstract}
We study pseudospectral and spectral functions for Hamiltonian system $Jy'-B(t)=\l\D(t)y$  and differential equation $l[y]=\l\D(t)y$  with matrix-valued coefficients defined on an interval $\cI=[a,b)$ with the regular endpoint $a$. It is not assumed that the matrix weight $\D(t)\geq 0$ is invertible a.e. on $\cI$. In this case a pseudospectral function always exists, but the set of spectral functions may be empty. We obtain a parametrization $\s=\s_\tau$ of all pseudospectral and  spectral functions $\s$ by means of a Nevanlinna  parameter $\tau$ and single out in terms of $\tau$  and boundary conditions the class  of functions $y$ for which the inverse Fourier transform $y(t)=\int\limits_\bR \f(t,s)\, d\s (s) \wh y(s)$ converges uniformly. We also show that  for scalar equation $l[y]=\l \D(t)y$ the set of spectral functions is not empty. This enables us to extend the Kats-Krein and Atkinson results for scalar Sturm - Liouville equation $-(p(t)y')'+q(t)y=\l \D (t) y$ to such equations with arbitrary coefficients $p(t)$ and $q(t)$ and arbitrary non trivial weight $\D (t)\geq 0$.
\end{abstract}
\maketitle
\section{Introduction}
We consider the differential equation of an even order $2r$
\begin{gather}\label{1.1}
l[y]= \sum_{k=0}^r  (-1)^k \left( p_{r-k}(t)y^{(k)}\right)^{(k)}=\l \D(t)y,\quad t\in \cI=[a,b\rangle , \quad -\infty <a<b\leq \infty
\end{gather}
and its natural generalization -- the Hamiltonian differential system
\begin{gather}\label{1.2}
Jy'-B(t)y= \l \D(t)y, \quad t\in \cI=[a,b\rangle , \quad -\infty <a<b\leq \infty
\end{gather}
on an interval $\cI=[a,b\rangle $ with the regular endpoint $a$ and arbitrary (regular or singular) endpoint $b$. It is assumed that the coefficients $p_j$ and the weight $\D$ in \eqref{1.1} are functions on $\cI$ with values in the set  $\B (\bC^m)$  of all linear operators in $\bC^m$ (or equivalently  $m\times m$-matrices) such that $p_j=p_j^*,\; \D\geq 0 $ (a.e. on $\cI$) and $p_0^{-1}, \; p_1, \dots, p_r, \D$ are locally integrable. As to system \eqref{1.2}, we assume that $J\in \B(\bC^n)\; (n=2p)$ is given by
\begin {equation} \label{1.3}
J=\begin{pmatrix} 0 & -I_p \cr  I_p&
0\end{pmatrix}:\underbrace{\bC^p\oplus \bC^p}_{\bC^n} \to \underbrace{\bC^p\oplus \bC^p}_{\bC^n}
\end{equation}
and $B$ and $\D$ are locally integrable $\B (\bC^n)$-valued functions on $\cI$ such that $B=B^*$ and $\D\geq 0$ a.e. on $\cI$. Equation  \eqref{1.1} (system \eqref{1.2}) is called regular if $b<\infty$ and $p_0^{-1}, \; p_1, \dots, p_r, \D$ (resp. $B,\D$)  are integrable on $\cI$; otherwise it is  called singular. Equation \eqref{1.1} is called scalar if $m=1$ and hence $p_j$ and $\D$ are real valued functions.

Following to \cite{BinVol13} we  call the weight $\D$ definite if it is invertible a.e. on $\cI$ and semi-definite in the opposite case. Moreover, the weight $\D$ in the scalar equation \eqref{1.1} is called nontrivial if the equality $\D(t)=0$ (a.e. on $\cI$) does not hold. Clearly, non triviality  is the weakest restriction on $\D$, which saves the interest to studying of \eqref{1.1}.

As is known a spectral function is a fundamental concept in the spectral theory of differential equations \cite{DunSch,Nai,Sht57,Wei} and Hamiltonian systems \cite{AD12,Kac03,Sah13}. Let $\f(\cd,\l)(\in \B(\bC^p,\bC^p\oplus\bC^p))$ be an operator solution of \eqref{1.2} such that $\f(a,\l)=(-\sin A, \cos A)^\top$ with some $A=A^*\in \B(\bC^p)$. Then a spectral function of the system \eqref{1.2} is defined as an operator-valued (or, equivalently, matrix-valued) distribution function $\s(s)(\in \B(\bC^p))$ such that the generalized Fourier  transform
\begin {equation}\label{1.4}
L_\D^2(\cI)\ni f(t)\to \wh f(s)=\int_\cI \f^*(t,s)\D(t)f(t)\,dt
\end{equation}
induces an isometry $V_\s$ from the Hilbert space $L_\D^2(\cI)$ of all vector-functions $f(t)(\in\bC^n)$ such that $\int\limits_{\cI}(\D (t)f(t),f(t))\, dt <\infty$ to the Hilbert space $L^2(\s;\bC^p)$. Similarly one defines a spectral function $\s(s)(\in \B((\bC^m)^r))$ of equation \eqref{1.1}. If $\s(\cd)$ is a spectral function of \eqref{1.1} or \eqref{1.2}, then for each $y\in L_\D^2(\cI)$ the inverse Fourier transform is
\begin{gather}\label{1.5}
y(t)=\int_\bR \f(t,s)\, d\s (s) \wh y(s),
\end{gather}
where the integral converges in $L_\D^2(\cI)$. Recall also that a spectral function $\s(\cd)$ is called orthogonal if $V_\s$ is a unitary operator.

Existence of a spectral function for equation \eqref{1.1} and system \eqref{1.2} with the definite weight is a classical result (see e.g. \cite{Wei}). This result was extended by I.S. Kats \cite{Kac69,Kac71} to the scalar Sturm-Liouville equation
\begin{gather}\label{1.7}
l[y]=-(p(t)y')' + q(t)y=\l \D(t)y, \quad t\in\cI=[a,b\rangle, \quad \l\in\bC
\end{gather}
with $p(t)\equiv 1$ and the semi-definite weight $\D$. Moreover, I.S. Kats and M.G. Krein parameterized in \cite[\S 14]{KacKre} all spectral functions of such an equation under the following additional conditions:

(A1) there is no  interval $(a,b')\subset\cI$ ($(a',b)\subset \cI$) such that $\D(t)=0$ a.e. on  $(a,b')$ (resp. on $(a',b)$);

(A2) if $\D(t)=0$ a.e. on an interval $(a',b')\subset \cI$, then $q(t)=0 $ (a.e. on $(a',b')$).

The Kats -- Krein parametrization  can be formulated as the following theorem.
\begin{theorem}\label{th1.0}
Consider scalar regular  equation \eqref{1.7}  such that  $p(t)\equiv 1$ and {\rm (A1)} and {\rm (A2)} are satisfied. Let   $\f(\cd,\l)$ and $\psi(\cd,\l)$ be solutions of \eqref{1.7} with
\begin{gather}\label{1.8}
\f(a,\l)=-\sin\a, \quad \f'(a,\l)=\cos\a,\;\;
\psi(a,\l)=-\cos\a, \quad \psi'(a,\l)=-\sin\a
\end{gather}
and let $\wh R[\bC]=R[\bC]\cup \{\tau(\l)\equiv\infty\}$, where $R[\bC]$ is the class of all  complex-valued Nevanlinna functions $\tau(\l)$ (see Section \ref{sect2.1}).  Then  the equalities
\begin{gather}
m_\tau(\l)=\frac {\psi (b,\l)\tau(\l)-\psi' (b,\l)}{
\f (b,\l)\tau(\l)-\f'(b,\l) }, \quad \l\in\CR\label{1.10}\\
\s_\tau(s)=\lim\limits_{\d\to+0}\lim\limits_{\varepsilon\to +0} \frac 1 \pi
\int_{-\d}^{s-\d}\im \,m_\tau(u+i\varepsilon)\, du \label{1.11}
\end{gather}
establish a bijective correspondence $\s(\cd)=\s_\tau(\cd)$ between all functions $\tau\in \wh R[\bC]$ and all (real valued) spectral functions $\s(\cd)$ of \eqref{1.7} (with respect to the Fourier transform \eqref{1.4}). Moreover, $\s_\tau(\cd)$ is orthogonal  if and only if $\tau(\l)\equiv\t(=\ov\t)$ or $\tau(\l)\equiv\infty,\;\l\in\CR$.
\end{theorem}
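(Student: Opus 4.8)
The plan is to recast the problem in the language of symmetric linear relations and their generalized resolvents, which is the natural setting once the weight $\D$ is allowed to degenerate. First I would introduce the Hilbert space $\gH=L_\D^2(\cI)$ and the minimal and maximal relations $\Tmi\subset\Tma$ generated by $l[y]=\l\D y$; since $\D$ is only semi-definite these are genuine (possibly multivalued) relations rather than operators, and here conditions (A1)--(A2) enter: (A1) prevents $\D$ from vanishing near either endpoint, while (A2) forces $q=0$ on the intervals where $\D=0$, and together they guarantee that the quotient by $\ker\D$ is well behaved and that, after imposing at the regular endpoint $a$ the boundary condition encoded in \eqref{1.8}, one obtains a \emph{symmetric relation} $S=S_\a$ with deficiency indices $(1,1)$, whose defect subspace at $\l\in\CR$ is spanned by the class of the solution $\f(\cd,\l)$ fixed by \eqref{1.8}. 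The spectral functions of \eqref{1.7} are then precisely the spectral measures attached to the self-adjoint exit-space extensions $\wt A_\tau\supset S_\a$.

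Next I would fix a boundary triplet $\{\bC,\G_0,\G_1\}$ for $S_\a^*$ whose boundary maps are the Lagrange brackets $[y,\f](b)$ and $[y,\psi](b)$ at the (regular) endpoint $b$. Because $p(t)\equiv1$ and the normalization \eqref{1.8} gives the constant Wronskian $\f(b,\l)\psi'(b,\l)-\f'(b,\l)\psi(b,\l)\equiv1$, the abstract Weyl function of this triplet is a scalar Nevanlinna function, and a direct computation identifies the fractional-linear expression \eqref{1.10} as the value $m_\tau$ of the perturbed Weyl function obtained by coupling the boundary condition at $b$ to the parameter $\tau$. Monotonicity of $\tau\in\wh R[\bC]$ together with the Wronskian identity then shows that $\l\mapsto m_\tau(\l)$ is itself Nevanlinna on $\CR$, so that the Stieltjes inversion \eqref{1.11} produces a bona fide nondecreasing distribution function $\s_\tau$.

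The heart of the argument is the bijection. By the Krein--Naimark formula for a symmetric relation with deficiency indices $(1,1)$, the compressed resolvents $P_\gH(\wt A_\tau-\l)^{-1}\up\gH$ are in one-to-one correspondence with the parameters $\tau\in\wh R[\bC]$, the map \eqref{1.10} being exactly the passage from $\tau$ to the scalar kernel $m_\tau$ of the compressed resolvent. The eigenfunction-expansion theorem for $\wt A_\tau$ then identifies the spectral measure of $\wt A_\tau$ with $d\s_\tau$ and shows that the Fourier transform \eqref{1.4} implements an isometry $V_{\s_\tau}$; hence each $\s_\tau$ is a spectral function. Conversely, to any spectral function $\s$ one associates the Nevanlinna function $m(\l)=\int_\bR(s-\l)^{-1}\,d\s(s)$ (suitably regularized), which by the boundary relation forced by \eqref{1.7} at $b$ must have the form \eqref{1.10}; since that map is invertible on $\wh R[\bC]$, the parameter $\tau$ is uniquely determined, giving both surjectivity and injectivity. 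The step I expect to be the main obstacle is precisely this last inversion: one must verify, using (A1)--(A2) to control the degeneracy, that $\s$ determines $m$ with no spurious linear term $\b_0\l$ and that the compressed resolvent really is scalar of deficiency $(1,1)$, so that $\s\mapsto\tau$ is single-valued and onto.

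Finally, for the orthogonality criterion I would use that $V_\s$ is unitary exactly when the corresponding generalized resolvent is \emph{orthogonal}, i.e. when $\wt A_\tau$ can be realized inside $\gH$ without a proper exit space. In Krein--Naimark theory this happens if and only if the Nevanlinna parameter is $\l$-independent, and this translates into $\tau(\l)\equiv\t$ with $\t=\ov\t$ (a self-adjoint separated boundary condition at $b$) or $\tau(\l)\equiv\infty$; a nonconstant $\tau$ necessarily requires a nontrivial exit space, whence $V_{\s_\tau}$ fails to be surjective. This yields the stated equivalence and completes the proof.
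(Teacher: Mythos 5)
Your overall strategy --- a boundary triplet at $b$ for the symmetric relation obtained by imposing the $a$-condition, the Krein--Naimark parametrization of exit-space extensions by $\tau\in\wh R[\bC]$, identification of \eqref{1.10} as the corresponding $m$-function, and Stieltjes inversion --- is exactly the framework the paper itself runs (Theorem \ref{th3.12}, taken from \cite{Mog15}, combined with Theorem \ref{th4.18.2}); note that the paper does not prove Theorem \ref{th1.0} from scratch but cites \cite{KacKre} and then recovers a strengthened version of it (Theorem \ref{th1.2}, (i), with arbitrary $p$ and merely nontrivial $\D$) from this general machinery.

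There is, however, a concrete gap at the heart of the degenerate-weight situation, and it is not the obstacle you flag. In general the correspondence $\tau\mapsto\s_\tau$ of Theorem \ref{th3.12} is a bijection between the \emph{admissible} parameters --- those satisfying the asymptotic condition \eqref{3.37.1} --- and the \emph{pseudospectral} functions; the Fourier transform $V_{\s_\tau}$ is a priori only a partial isometry with kernel $\mul T$. The assertion you actually need --- that \emph{every} $\tau\in\wh R[\bC]$ is allowed and every $\s_\tau$ is a genuine spectral function, i.e.\ $V_{\s_\tau}$ is isometric on all of $L_\D^2(\cI)$ --- is equivalent to $\mul T=\mul T^*=\{0\}$, that is, to the underlying symmetric relation being a densely defined operator (Theorem \ref{th3.12}, (i), and Proposition \ref{pr3.13}, (ii)). Your proposal assumes this implicitly (``the quotient by $\ker\D$ is well behaved''; ``spectral functions are precisely the spectral measures attached to exit-space extensions'') but never isolates it as a claim, let alone derives it from (A1)--(A2); the admissibility restriction never appears in your argument, so as written your bijection covers only the admissible part of $\wh R[\bC]$ and, for those parameters, yields pseudospectral rather than spectral functions. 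This is where (A1)--(A2) must actually be spent in the classical Kats--Krein argument --- or, as the paper shows, where they can be dispensed with entirely: by Lemma \ref{lem4.12} and Theorem \ref{th4.13}, scalarity together with $\mu(\D>0)\neq 0$ already forces the minimal relation to be a densely defined operator, whence $\mul S=\mul S^*=\{0\}$, which is precisely how Theorem \ref{th4.14} removes assumptions (A1)--(A2) altogether. The difficulty you do flag (recovering $\tau$ from $\s$ with no spurious linear term, scalar deficiency $(1,1)$) is real but secondary: it is absorbed into the parametrization of \cite{Mog15} once the multivalued-part issue above is settled. Your orthogonality criterion is correct and coincides with Theorem \ref{th3.12}, (ii).
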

As is known each orthogonal spectral function $\s(\cd)$ of the equation \eqref{1.1} with definite weight is associated with a certain self-adjoint operator $\wt S$ in $L_\D^2(\cI)$. Moreover, a classical result claims that for each function $y$ from the domain of $\wt S$ the $L_\D^2(\cI)$-convergence in \eqref{1.5} can be improved to uniform convergence on each compact interval $[a,c]\subset\cI$ (see e.g. \cite[Theorem \Romannumeral{13}.5.16]{DunSch}. In the case of the Sturm -- Liouville equation this result yields the following theorem (see e.g. \cite{Col}).
\begin{theorem}\label{th1.1}
Consider the eigenvalue problem for scalar regular Sturm-Liouville equation \eqref{1.7} with the definite weight $\D$ subject to self-adjoint boundary conditions
\begin{gather}\label{1.13}
\cos\a\cd y(a) + \sin \a \cd (py')(a)=0, \qquad \cos\b\cd y(b) + \sin \b \cd (py')(b)=0.
\end{gather}
Then each function $y\in AC(\cI)$ such that $py'\in AC(\cI)$, $\D^{-1}l[y]\in L_\D^2(\cI)$ and \eqref{1.13} is satisfied admits the eigenfunction expansion
\begin{gather}\label{1.14}
y(t)=\sum_{k=1}^\infty (y,v_k)_\D v_k(t), \quad t\in\cI,
\end{gather}
which converges absolutely and uniformly on $\cI$. In \eqref{1.14} $\{v_k\}_1^\infty$ are orthonormal eigenfunctions of the problem \eqref{1.7}, \eqref{1.13}.
\end{theorem}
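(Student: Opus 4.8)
The plan is to realize the boundary-value problem as a self-adjoint operator with compact resolvent and then control the eigenfunction expansion through the Green function. First I would introduce the operator $\wt S$ in $L_\D^2(\cI)$ generated by the expression $l[\cdot]$ on the domain
\[
\dom \wt S=\{y: y,\,py'\in AC(\cI),\ \D^{-1}l[y]\in L_\D^2(\cI),\ y\ \text{satisfies}\ \eqref{1.13}\},\qquad \wt S y=\D^{-1}l[y].
\]
Since \eqref{1.7} is regular and $\D$ is definite, the separated conditions \eqref{1.13} make $\wt S$ self-adjoint, and its resolvent $R_z=(\wt S-z)^{-1}$, $z\in\CR$, is an integral operator whose Green kernel $G_z(\cd,\cd)$ is continuous on the compact square $\cI\tm\cI$. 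Hence $\wt S$ has purely discrete real spectrum $\{\l_k\}$ with a complete orthonormal system of eigenfunctions $\{v_k\}_1^\infty\subset\dom\wt S$, $\wt S v_k=\l_k v_k$. The hypotheses on $y$ say precisely that $y\in\dom\wt S$, so \eqref{1.14} is the abstract eigenfunction expansion of $y$ and converges in $L_\D^2(\cI)$ by completeness; it remains only to upgrade this to absolute and uniform convergence on $\cI$.

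Next I would pass to the coefficients. Put $f:=\D^{-1}l[y]=\wt S y\in L_\D^2(\cI)$. Using self-adjointness of $\wt S$ and $\wt S v_k=\l_k v_k$ one gets, for every $k$ with $\l_k\neq 0$,
\[
(y,v_k)_\D=\tfrac1{\l_k}(\wt S y,v_k)_\D=\tfrac1{\l_k}(f,v_k)_\D
\]
(for the at most one index with $\l_k=0$ the summand $(y,v_k)_\D\,v_k(t)$ is a single continuous bounded function, irrelevant for convergence). Splitting by Cauchy--Schwarz, the tail of \eqref{1.14} is estimated by
\[
\sum_{k=M}^N|(y,v_k)_\D|\,|v_k(t)|\le\Bigl(\sum_{k=M}^N|(f,v_k)_\D|^2\Bigr)^{1/2}\Bigl(\sum_{k=M}^N\frac{|v_k(t)|^2}{|\l_k|^2}\Bigr)^{1/2}.
\]
By Parseval the first factor is a tail of $\|f\|_\D^2$ and tends to $0$ as $M\to\infty$, uniformly in $N$ and in $t$.

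The decisive step is a uniform bound for the second factor. Fixing $z=i$ and expanding $G_i(t,\cd)\in L_\D^2(\cI)$ in the basis $\{v_k\}$, the identity $R_i v_k=(\l_k-i)^{-1}v_k$ gives Fourier coefficients $(G_i(t,\cd),v_k)_\D=(\l_k-i)^{-1}v_k(t)$, so that Parseval yields
\[
\sum_k\frac{|v_k(t)|^2}{\l_k^2+1}=\int_\cI|G_i(t,s)|^2\D(s)\,ds=:g(t).
\]
As $G_i$ is continuous on the compact $\cI\tm\cI$ and $\D$ is integrable, $g$ is continuous, hence bounded by some $C$ on $\cI$. Since $\l_k^2\ge\tfrac12(\l_k^2+1)$ whenever $|\l_k|\ge 1$, the second factor above is dominated by $2C$ apart from the finitely many indices with $|\l_k|<1$, whose contribution is a continuous, hence bounded, function of $t$. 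Combining this uniform bound with the vanishing of the first factor shows that the partial sums of \eqref{1.14} form a uniform Cauchy sequence on $\cI$ and that the series converges absolutely; its uniform limit is continuous and coincides a.e.\ with the $L_\D^2$-limit $y$, hence everywhere since $y\in AC(\cI)$.

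I expect the main obstacle to be the continuity and boundedness of the Green function $G_i$ on $\cI\tm\cI$ together with the finiteness of $g(t)$, i.e.\ the fact that $R_i$ is an integral (Hilbert--Schmidt) operator with continuous kernel. These are exactly the points where regularity of \eqref{1.7} and definiteness of $\D$ enter, and they are what permit replacing the weak $L_\D^2$-convergence of \eqref{1.5} by genuine absolute and uniform convergence.
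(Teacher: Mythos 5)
Your proof is correct, but it takes a genuinely different route from the paper's. All the steps check out: with $\D$ definite and \eqref{1.7} regular, $\wt S y=\D^{-1}l[y]$ on the stated domain is self-adjoint in $L_\D^2(\cI)$ with a Hilbert--Schmidt resolvent whose kernel $G_i$ is continuous on the compact square $\cI\tm\cI$; the Parseval identity $\sum_k|v_k(t)|^2(\l_k^2+1)^{-1}=\int_\cI|G_i(t,s)|^2\D(s)\,ds$ is valid (choose the $v_k$ real, or insert the conjugations, which do not affect moduli); and the Cauchy--Schwarz split against the tails of $\|\wt Sy\|_\D^2$ gives absolute and uniform convergence, definiteness of $\D$ being exactly what lets you identify the uniform limit with $y$ pointwise. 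The paper, by contrast, never touches a Green kernel: Theorem \ref{th1.1} is quoted as classical, and the paper's own derivation of it (as the special case $\tau(\l)\equiv\t=\t^*$ of Theorem \ref{th3.16}, spelled out for Sturm--Liouville equations in Corollary \ref{cor4.18.5}) extracts uniform convergence from a purely ODE-theoretic mechanism: Proposition \ref{pr3.14} shows that if $\{y_n,f_n\}\in\cT_*$ with $\|y_n-y\|_\D\to 0$ and $\|f_n-f\|_\D\to 0$, then $y_n\to y$ uniformly on compacts, by splitting $y_n$ through the variation-of-constants formula \eqref{3.5.2} — the inhomogeneous part is handled by Cauchy--Schwarz in $\cL_\D^2$, while the homogeneous part lies in the finite-dimensional solution space $\cN_0'$ of Lemma \ref{lem4.14}, where $\|\cd\|_\D$-convergence already implies uniform convergence thanks to $U$-definiteness; one then feeds in the spectral truncations $y_n(t)=\int_\bR\f_U(t,s)\,d\s(s)\chi_{B_n}(s)\wh y(s)$ of Proposition \ref{pr3.15} together with the compression machinery of Theorem \ref{th2.9}. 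Your argument buys brevity and an explicit quantitative bound $\sum_k|v_k(t)|^2(\l_k^2+1)^{-1}\le C$; the paper's argument buys generality: your route requires $\D^{-1}$ (already in the definition of $\wt S$), a compact interval, and an orthogonal expansion attached to a self-adjoint operator acting in $L_\D^2(\cI)$ itself, so it does not extend to the semi-definite weights, singular endpoints, or nonorthogonal spectral functions $\s_\tau$ (where the self-adjoint realization lives in an exit space and no continuous Green kernel in $L_\D^2(\cI)$ is available), which is precisely the setting the paper is after.
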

F. Atkinson in \cite[Theorem 8.9.1]{Atk} extended Theorem \ref{th1.1} to scalar regular equations  \eqref{1.7} with semi-definite  weight $\D$ satisfying the condition $0\leq p(t)\leq \infty, \; t\in\cI,$ and assumptions (A1) and (A2) before Theorem \ref{th1.0}. Moreover, Theorem \ref{th1.1} was extended to eigenvalue problems for regular scalar equations \eqref{1.7} \cite{Ful77,Hin79} and \eqref{1.1} \cite{Bin02} with definite weight subject to boundary conditions linearly dependent on the eigenparameter $\l$. It is worth to note that these papers deal in fact with a special class of nonorthogonal spectral functions. Observe also that various properties (existence and behavior of eigenvalues, oscillation of eigenfunctions etc.) of eigenvalue problems for Sturm -- Liouville equations with semi-definite weight was studied in \cite{BinVol13}.

It turns out that a spectral function of the system \eqref{1.2} and equation \eqref{1.1} with semi-definite weight may not exist and hence definition of a spectral function requires a certain modification. To this end one defines a pseudospectral function of the system \eqref{1.2} as an operator-valued distribution function $\s(s)(\in \B(\bC^p))$ such that the generalized Fourier transform \eqref{1.4} induces a partial isometry $V_\s:L_\D^2(\cI)\to L^2(\s;\bC^p)$ with the minimally possible kernel $\ker V_\s$ (see \cite{Kac03,AD12,Sah13} for regular systems and \cite{Mog15} for singular ones). If $\s(\cd)$ is a pseudospectral
function, then the inverse Fourier transform \eqref{1.5} holds only for functions $y\in L_\D^2(\cI)\ominus \ker V_\s$. It turns out that a pseudospectral function exists for any system \eqref{1.2}; moreover, either the set of  spectral functions of a given system is empty or it coincides with the set of pseudospectral ones. The Kats -- Krein parametrization of  spectral functions   was extended in \cite{AD12,Mog15,Sah13} to Hamiltonian systems \eqref{1.2}. In these papers a parametrization $\s(\cd)=\s_\tau(\cd)$ of pseudospectral functions $\s(\cd)$ is given in terms of the parameter $\tau=\tau(\l)$,  which takes on values in the set of all relation-valued Nevanlinna functions (for more details see Theorem \ref{th3.12}).

In the present paper we extend the above results concerning the uniform convergence of the inverse Fourier transform \eqref{1.5} to arbitrary (possibly nonorthogonal) pseudospectral and spectral functions of differential equation \eqref{1.1} and Hamiltonian system \eqref{1.2} with matrix-valued coefficients and semi-definite weight $\D$. This enables us to extend Theorems \ref{th1.0} and \ref{th1.1} to scalar regular Sturm - Liouville equation \eqref{1.7} with arbitrary coefficients $p$ and $q$ and semi-definite nontrivial weight  $\D$.

First we consider Hamiltonian system \eqref{1.2}. Assume for simplicity that the set of spectral functions of this system is not empty. Let $\tau=\tau(\l)$ be a Nevanlinna parameter and let $\s(\cd)=\s_\tau(\cd)$ be the corresponding spectral function of the system. We prove the following statement:

(S) If $y\in L_\D^2(\cI)$ is an absolutely continuous vector-function such that the equality $Jy'-By=\D f_y$ holds with some $f_y\in L_\D^2(\cI)$ and the boundary conditions
\begin{gather}\label{1.16}
(\cos A,\, \sin A)\, y(a)=0, \qquad \G_b y\in\eta_\tau
\end{gather}
are satisfied, then the inverse Fourier transform \eqref{1.5} converges absolutely and uniformly on each compact interval $[a,c]\in\cI$. In \eqref{1.16} $A=A^*\in \B(\bC^p)$, $\G_b y$ is a singular boundary value of $y$ at the endpoint $b$ (in the case of the regular system one can put $\G_b y = y(b)$) and $\eta_\tau$ is a linear relation defined in terms of the asymptotic behavior   of the parameter $\tau(\l)$ at the infinity.

If $\tau(\l)\equiv \t$ is a self-adjoint parameter, then the spectral function $\s_\tau(\cd)$ is orthogonal, $\eta_\tau=\t$ and  \eqref{1.16} turns into self-adjoint boundary conditions, which defines  a self-adjoint operator $\wt T$ in $L_\D^2(\cI)$. So in this case under the additional assumption of definiteness  of $\D$ statement (S) gives rise to  known results on the uniform convergence \cite{DunSch}. Note also that in fact we prove  statement (S) for pseudospectral functions (see Theorem \ref{th3.16}).

As is known \cite{KogRof75} equation \eqref{1.1} is equivalent to a certain special system \eqref{1.2}. Therefore the concept of a pseudospectral function and  relative results can be readily transformed to equation \eqref{1.1} with matrix-valued coefficients and semi-definite weight (see Theorems \ref{th4.8} and \ref{th4.11}). Nevertheless it turns out that scalar equation \eqref{1.1} with semi-definite  nontrivial weight possesses an essential peculiarity. Namely, we show (see Theorem \ref{th4.14}) that the set of spectral functions of such an equation is not empty. Moreover, we parameterize all these spectral functions by means of a Nevanlinna  parameter $\tau$ and single out in terms of $\tau$  and boundary conditions the class  of functions $y\in L_\D^2(\cI)$ for which the inverse Fourier transform \eqref{1.5} with the spectral function $\s(\cd)=\s_\tau (\cd)$ converges uniformly on each compact interval $[a,c]\subset\cI$ (see Theorems \ref{th4.14}, \ref{th4.15} and \eqref{th4.18.2}). In the case of the Sturm -- Liouville equation these results can be formulated in the form of the  following theorem.
\begin{theorem}\label{th1.2}
Consider scalar regular equation \eqref{1.7} on $\cI=[a,b]$ with real-valued coefficients $p,q$ and  semi-definite nontrivial weight  $\D (t)\geq 0 \,$ ($p^{-1},q,\D\in L^1(\cI)$). Denote by $\dom l$ the set of all functions $y\in AC(\cI)$ such that $y^{[1]}:=py'\in AC(\cI)$ and let $l[y]:=-(y^{[1]})'+qy,\; y\in\dom l$. Moreover, let  $\f(\cd,\l)\in\dom l$ and $\psi(\cd,\l)\in\dom l$ be solutions of \eqref{1.7} defined by initial values  \eqref{1.8} with $\f^{[1]}(a,\l)$ and $\psi^{[1]} (a,\l)$ instead of $\f'(a,\l)$ and $\psi'(a,\l)$ respectively. Then:

{\rm (i)} The set of spectral functions of \eqref{1.7} (with respect to the Fourier transform \eqref{1.4}) is not empty and statement of Theorem \ref{th1.0} is valid.

{\rm (ii)}Let $\tau=\tau(\cd)\in R[\bC]$ and let $\s(\cd)=\s_\tau(\cd) $ be the corresponding spectral function of \eqref{1.7}  defined by \eqref{1.10} and  \eqref{1.11}. Denote by $\cF$ the set of all functions $y\in \dom l$ satisfying the following conditions: (a) there exists a function $f_y\in\cL_\D^2(\cI)$ such that $l[y]=\D f_y $(a.e. on $\cI$); (b)  one of the following boundary conditions {\rm (bc1)} -- {\rm (bc3)}  dependent on $\tau$ are satisfied:

{\rm (bc1)} if $\lim\limits_{y\to\infty} \frac {\tau(iy)} {iy}\neq 0$, then $\cos\a\cd y(a) + \sin \a \cd y^{[1]}(a)=0\;$ and $\; y(b)=0$;

{\rm (bc2)} if
\begin{gather}\label{1.19}
\lim\limits_{y\to\infty} \tfrac {\tau(iy)} {iy}= 0 \;\;\; {\rm and} \;\;\; \lim_{y\to\infty} y\im \tau(iy)<\infty,
\end{gather}
then $\cos\a\cd y(a) + \sin \a \cd y^{[1]}(a)=0\;$ and $\; y^{[1]}(b)=D_\tau y(b)$ (here $D_\tau=\lim\limits_{y\to\infty}\tau(iy)$);

{\rm (bc3)} if $\lim\limits_{y\to\infty} \frac {\tau(iy)} {iy}= 0$ and $ \lim\limits_{y\to\infty} y\im \tau(iy)=\infty$, then

\centerline{$\cos\a\cd y(a) + \sin \a \cd y^{[1]}(a)=0, \;\; y(b)=0\;\;{\rm and}\;\; y^{[1]}(b)=0$.}

Then for each function $y\in\cF$
\begin{gather}\label{1.22}
y(t)=\int_{\bR} \f(t,s) \wh y(s)\, d\s(s),
\end{gather}
where the integral converges absolutely and uniformly on $\cI$.
\end{theorem}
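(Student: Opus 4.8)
The plan is to obtain Theorem~\ref{th1.2} as a specialization of the general results for scalar equations \eqref{1.1} (Theorems~\ref{th4.14} and~\ref{th4.15}), which in turn rest on statement~(S) for Hamiltonian systems (Theorem~\ref{th3.16}). Part~(i) is essentially a restatement: Theorem~\ref{th4.14} asserts that for a scalar regular equation \eqref{1.7} with nontrivial semi-definite weight the set of spectral functions is nonempty and that the bijective parametrization $\s=\s_\tau$, $\tau\in\wh R[\bC]$, of Theorem~\ref{th1.0} holds verbatim, with $\f^{[1]}(b,\l)$ and $\psi^{[1]}(b,\l)$ in place of $\f'(b,\l)$, $\psi'(b,\l)$, and with the orthogonal functions corresponding to $\tau\equiv\ov\t$ or $\tau\equiv\infty$. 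The point of Theorem~\ref{th4.14} is that neither (A1), (A2) nor $p\equiv 1$ is needed, so this is genuinely stronger than Theorem~\ref{th1.0}.

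For part~(ii) I would first pass to the first-order form. Putting $\bm y=(y,\,y^{[1]})^\top$ with $y^{[1]}=py'$ rewrites \eqref{1.7} as the system $J\bm y'-B\bm y=\l\wt\D\bm y$ with $n=2$, $B=\diag(-q,\,p^{-1})$ and the degenerate weight $\wt\D=\diag(\D,0)$; note that $\wt\D$ is semi-definite even when the scalar $\D$ is invertible, so the semi-definite machinery of Sections~3--4 is unavoidable. A direct computation gives $J\bm y'-B\bm y=(l[y],0)^\top$, whence condition~(a), $l[y]=\D f_y$ with $f_y\in\cL_\D^2(\cI)$, is exactly the hypothesis $J\bm y'-B\bm y=\wt\D\bm f$ of statement~(S) with $\bm f=(f_y,0)^\top$. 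The boundary form at $a$ reads $(\cos\a,\sin\a)\,\bm y(a)=\cos\a\cd y(a)+\sin\a\cd y^{[1]}(a)$, which matches the condition at $a$ in each of (bc1)--(bc3); since the system is regular one may take $\G_b\bm y=\bm y(b)=(y(b),\,y^{[1]}(b))^\top$.

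The core of the argument is to identify the abstract relation $\eta_\tau\subset\bC^2$ of statement~(S) for a scalar $\tau\in R[\bC]$ and to show that $\G_b\bm y\in\eta_\tau$ is equivalent to the trichotomy (bc1)--(bc3). Using the Nevanlinna representation $\tau(\l)=\g+\ka\l+\int_\bR(\tfrac1{t-\l}-\tfrac t{1+t^2})\,d\mu(t)$ with $\ka\geq0$, one has $\lim_{y\to\infty}\tfrac{\tau(iy)}{iy}=\ka$ and, when $\ka=0$, $\lim_{y\to\infty}y\im\tau(iy)=\mu(\bR)$ by monotone convergence; these three regimes are exhaustive and mutually exclusive. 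Computing $\eta_\tau$ as the limit of the graphs $\{(h,\tau(iy)h):h\in\bC\}$ in the sense of linear relations then yields: for $\ka\neq0$ the graph tends to $\eta_\tau=\{(0,k):k\in\bC\}$, i.e. $y(b)=0$ (case (bc1)); for $\ka=0$, $\mu(\bR)<\infty$ one has $\tau(iy)\to D_\tau\in\bR$ and $\eta_\tau=\{(h,D_\tau h):h\in\bC\}$, i.e. $y^{[1]}(b)=D_\tau y(b)$ (case (bc2)); and for $\ka=0$, $\mu(\bR)=\infty$ the relation collapses to $\eta_\tau=\{(0,0)\}$, i.e. $y(b)=0$ and $y^{[1]}(b)=0$ (case (bc3)). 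Hence $\cF$ is precisely the class of $\bm y$ admitted by statement~(S), and Theorem~\ref{th4.15} (via Theorem~\ref{th3.16}) gives absolute and uniform convergence of \eqref{1.22} on every compact $[a,c]\subset\cI$; taking $c=b$, legitimate because the equation is regular on $\cI=[a,b]$, upgrades this to uniform convergence on all of $\cI$.

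The main obstacle I anticipate is precisely the passage through $\eta_\tau$: matching the abstract relation of Theorem~\ref{th3.16} (stated there for relation-valued parameters acting in a general boundary space) with the concrete graph limit of a scalar $\tau$, getting the sign and normalization of the boundary maps at $b$ right, and handling the borderline case (bc3), where the degeneracy of $\wt\D$ forces an over-determined pair of conditions at $b$ while $\eta_\tau$ shrinks to $\{(0,0)\}$. Verifying that this scalar specialization of $\eta_\tau$ is faithful---so that no spectral information is lost in the reduction---is the delicate technical point; once it is in place, the remaining steps are the routine translation of conditions (a)--(b) and the regular-endpoint limit $c\to b$.
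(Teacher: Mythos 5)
Your overall skeleton is the paper's own: part (i) by specializing the scalar-equation results of Section 5, and part (ii) by translating condition (a) and the boundary conditions into the setting of Theorem \ref{th4.15} (with $G_{0b}y=y(b)$, $G_{1b}y=y^{[1]}(b)$ as in Remark \ref{rem4.11.1}) and identifying $\eta_\tau$ in the three asymptotic regimes of $\tau$ — exactly how the paper argues. But one step of your derivation of $\eta_\tau$ is genuinely flawed. You propose to compute $\eta_\tau$ ``as the limit of the graphs $\{(h,\tau(iy)h):h\in\bC\}$ in the sense of linear relations''. This fails in case (bc3), even though your answer $\eta_\tau=\{(0,0)\}$ is correct there. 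Take $\tau(\l)\equiv\g+i\pi$ for $\l\in\bC_+$ (in your integral representation: $\ka=0$ and $\mu$ equal to Lebesgue measure): then $\ka=0$ and $y\im\tau(iy)=\pi y\to\infty$, so this $\tau$ falls under (bc3); yet the graphs of $\tau(iy)$ are constant in $y$ and converge to the graph of the non-real number $\g+i\pi$, not to $\{(0,0)\}$. The paper never takes graph limits: it reads $\eta_\tau$ directly off formula \eqref{2.13} of Theorem \ref{th2.9}, where $\dom D_{\tau_0}$ is defined by finiteness of $\lim_{y\to\infty} y\im(\tau_0(iy)h,h)$ — a condition that annihilates every $h\neq 0$ in regime (bc3) regardless of whether $\tau(iy)h$ converges. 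So you must replace your graph-limit computation by a direct evaluation of $\cB_{\tau_0}$ and $\dom D_{\tau_0}$ from \eqref{2.13}; with the sign conventions of Theorem \ref{th3.16} (the condition is $\{G_{0b}y,-G_{1b}y\}\in\eta_\tau$, and the paper's $\eta_\tau$ in case (bc2) is $\{\{h,-D_\tau h\}:h\in\bC\}$), the two minus signs cancel and one recovers exactly your conditions (bc1)--(bc3), so your flagged worry about signs is resolved in your favor.

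A smaller citation point on part (i): Theorem \ref{th4.14} yields existence and a bijective parametrization via the abstract matrix $M(\l)$ of \eqref{3.33}, but not the concrete formula \eqref{1.10} expressing $m_\tau$ through the boundary values $\f(b,\l)$, $\f^{[1]}(b,\l)$, $\psi(b,\l)$, $\psi^{[1]}(b,\l)$. To get ``statement of Theorem \ref{th1.0} is valid'' verbatim you need Theorem \ref{th4.18.2} together with the choices $U=(-\cos\a,-\sin\a)$, $U'=(-\sin\a,\cos\a)$ (which force $\f_U(a,\l)=-\sin\a$, $\f_U^{[1]}(a,\l)=\cos\a$, matching \eqref{1.8}); this is what the paper invokes. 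The remainder of your reduction — the first-order rewriting with $B=\diag(-q,\,p^{-1})$ and $\wt\D=\diag(\D,0)$, the identification of condition (a), the exhaustiveness of the trichotomy via the Nevanlinna representation, $d=2$ by regularity, and uniform convergence on all of $\cI=[a,b]$ since $\cI$ itself is compact — agrees with the paper, where the system reduction is packaged once and for all in Proposition \ref{pr4.1} so that Theorems \ref{th4.14} and \ref{th4.15} are already stated at the level of the equation.
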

Note that statement (i) of Theorem \ref{th1.2} extends the Kats existence theorem \cite{Kac69,Kac71} and  Kats -Krein parametrization of spectral functions to Sturm-Liouville equations \eqref{1.7} with $p(t)\not\equiv 1$ and semi-definite nontrivial weight $\D$ (cf Theorem \ref{th1.0}). Moreover, by using Theorem \ref{th1.2} we extend to such  equations  Theorem \ref{th1.1} (see Corollary \ref{cor4.18.5}). In other words, we show that in the case $p(t)<\infty$ Theorem \ref{th1.1} remains valid without Atkinson's assumptions.

In conclusion note that our investigations are based on the results of \cite{Mog19} (see also \cite{DajLan18}), where compression $P_\gH \wt A\up \gH$ of an exit space   extension $\wt A=\wt A^*$ of an operator $A\subset A^*$  in the Hilbert space $\gH$ are characterized in terms of abstract boundary conditions. We show that in the case of a nonorthogonal spectral function $\s(\cd)$ the integral in \eqref{1.5} converges uniformly for any $y$ from the domain of the compression of respective $\wt A$ and then apply the results of \cite{Mog19} to this compression.

\section{Preliminaries}
\subsection{Notations}\label{sect2.1}
The following notations will be used throughout the paper: $\gH$, $\cH$ denote separable  Hilbert spaces; $\B (\cH_1,\cH_2)$  is the set of all bounded linear operators defined on $\cH_1$ with values in  $\cH_2$;  $A\up \mathcal L$ is a restriction of the operator $A\in \B(\cH_1,\cH_2)$ to the linear manifold $\mathcal L\subset\cH_1$; $P_\cL$ is the orthoprojection in $\gH$ onto the subspace $\cL\subset \gH$;  $\bC_+\,(\bC_-)$ is the open  upper (lower) half-plane  of the complex plane; $ \cA$ is the $\s$-algebra of  Borel sets in $\bR$ and $\mu$ is the Borel measure on $\cA$. For a set $B\subset\bR$ we denote by $\chi_B(\cd)$ the indicator of $B$, i.e., the real-valued function on $\bR$ given by $\chi_B(t)=1$ for $t\in B$ and $\chi_B(t)=0$ for $t\in \bR\setminus B$.

Recall that a linear manifold $T$ in the Hilbert space $\cH_0\oplus\cH_1$ ($\cH\oplus\cH$) is called a  linear relation from $\cH_0$ to $\cH_1$ (resp. in $\cH$). The set of all closed linear relations from $\cH_0$ to $\cH_1$ (in $\cH$) will be denoted by $\C (\cH_0,\cH_1)$ (resp. $\C(\cH)$). Clearly for each linear operator $T:\dom T\to\cH_1, \;\dom T\subset \cH_0,$ its graph ${\rm gr} T =\{\{f,Tf\}:f\in \dom T\} $  is a linear relation from $\cH_0$ to $\cH_1$. This fact enables one to consider an operator $T$ as a linear relation. In the following we denote by $\cC (\cH_0,\cH_1)$ the set of all closed linear operators $T:\dom T\to\cH_1, \;\dom T\subset \cH_0$. Moreover, we let  $\cC (\cH)=\cC (\cH,\cH)$.

For a linear relation $T$ from $\cH_0$ to $\cH_1$  we denote by $\dom T, \; \ker T,\; \ran T $ and $\mul T:=\{h_1\in\cH_1: \{0,h_1\}\in T\}$
the domain, kernel, range and multivalued part  of
$T$ respectively. Denote also by $T^{-1}$ and $T^*$ the inverse and adjoint linear relations of $T$ respectively. Clearly, $T$ is an operator if and only if $\mul T=\{0\}$.

We will use the following notations:

(i) $R[\cH]$  is the set of all Nevanlinna $\B(\cH)$-valued functions, i.e., the set of all  holomorphic operator functions $M(\cd):\CR\to \B(\cH)$ such that   $\im\l\cd\im M (\l)\geq 0$ and $M^*(\l)=M (\ov\l), \; \l\in\CR$;

(ii) $R_u[\cH]$  is the set of all functions $M(\cd)\in R[\cH]$ such that  $(\im M(\l))^{-1}\in\B (\cH)$ for all $\l\in\CR$;

(iii) $\RH$ is the set of all Nevanlinna relation-valued functions (see e.g. \cite{DM06}), which in the case $\cH=\bC^m$ can be defined as the set of all functions  $\tau(\cd):\CR\to \C (\bC^m)$ such that   $\mul \tau(\l):=\cK$  does not depend on $\l\in\CR$ and the decompositions
\begin{gather}\label{2.9}
\bC^m=\cH_0 \oplus\cK, \qquad \tau (\l)={\rm gr}\,\tau_0(\l)\oplus \wh \cK, \quad \l\in\CR
\end{gather}
hold with $\wh\cK=\{0\}\oplus \cK$ and $\tau_0(\cd)\in R [\cH_0]$ (the operator function $\tau_0(\cd)$ is called the operator part of $\tau(\cd)$).

It is clear that $R[\cH]\subset \wt R(\cH)$.
\subsection{Boundary triplets and compressions of exit space extensions}
Recall that a linear relation $T$ in $\gH$ is called symmetric (self-adjoint) if $T\subset T^*$ (resp. $T=T^*$). In the following we denote by $A$ a closed symmetric linear relation  in a Hilbert space $\gH$. Let  $\gN_\l(A)=\ker (A^*-\l)\; (\l\in\CR)$ be a defect subspace of $A$ and let $n_\pm (A):=\dim \gN_\l(A),\;
\l\in\bC_\pm,$ be deficiency indices of $A$. Denote by $\cex$ the
set of all closed proper extensions of $A$ (i.e., the set of all
relations $\wt A \in\C(\gH)$ such that $A\subset\wt A\subset A^*$).

It is easy to see that $A$ is a densely defined operator if and only if $\mul A^*=\{0\}$.

As is known a linear relation $\wt A=\wt A^*$ in a Hilbert space $\wt\gH\supset \gH$ is called an exit space extension of $A$ if $A\subset \wt A$ and the  minimality condition $\ov{{\rm span}} \{\gH,(\wt A-\l)^{-1}\gH: \l\in\CR\}=\wt\gH$
is satisfied.
\begin{definition}\label{def2.3}$ \,$\cite{GorGor}
A collection $\Pi=\bt$ consisting of a Hilbert space $\cH$ and linear mappings   $\G_j:A^*\to \cH, \; j\in\{0,1\},$  is called a
boundary triplet for $A^*$, if the mapping $\G=(\G_0,\G_1)^\top $ from $A^*$ into
$\cH\oplus\cH$ is surjective and the following abstract Green's
identity  holds:
\begin {equation*}
(f',g)-(f,g')=(\G_1  \wh f,\G_0 \wh g)- (\G_0 \wh f,\G_1
\wh g), \quad \wh f=\{f,f'\}, \; \wh g=\{g,g'\}\in A^*.
\end{equation*}
\end{definition}
\begin{theorem}\label{th2.6}$\,$ \cite{DM91,Mal92}
Let $\Pi=\bt$ be a boundary triplet for  $A^*$. Then:

{\rm (\romannumeral 1)}   The
mapping
\begin {equation}\label{2.11}
\t\to  A_\t :=\{ \hat f\in A^*:\{\G_0  \hat f,\G_1 \hat f \}\in
\t\}
\end{equation}
establishes a bijective correspondence $\wt A=A_\t$ between all linear relations  $\t \in\C(\cH)$ and all extensions $ \wt A= \cex$. Moreover $A_\t$ is symmetric (self-adjoint)  if  and only if $\t$ is symmetric (resp. self-adjoint).

{\rm (\romannumeral 2)} The equality $P_\gH (\wt A_\tau -\l)^{-1}\up\gH =(A_{-\tau(\l)}-\l)^{-1}, \; \l\in\CR,$ gives a bijective correspondence $\wt A=\wt A_\tau$ between all functions $\tau=\tau(\cd)\in \RH$ and all exit space extensions $\wt A=\wt A^*$ of $A$. Moreover, if $\tau(\l)\equiv \t(=\t^*), \; \l\in\CR$, then $\wt A_\tau = A_{-\t}$ (see \eqref{2.11}).
\end{theorem}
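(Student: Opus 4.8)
The plan is to deduce everything in (\romannumeral 1) from the single structural fact that $\G=(\G_0,\G_1)^\top$ is a surjective linear map of $A^*$ onto $\cH\oplus\cH$ whose kernel is exactly $A$. First I would prove $\ker\G=A$. If $\G_0\hat f=\G_1\hat f=0$, then Green's identity gives $(f',g)-(f,g')=0$ for every $\hat g=\{g,g'\}\in A^*$, so $\hat f\in(A^*)^*=A$ (using that $A$ is closed); conversely, if $\hat f\in A$ then $(f',g)-(f,g')=0$ for all $\hat g\in A^*$ by definition of the adjoint, whence Green's identity yields $(\G_1\hat f,\G_0\hat g)-(\G_0\hat f,\G_1\hat g)=0$ for all $\hat g$, and surjectivity of $\G$ (which lets $\{\G_0\hat g,\G_1\hat g\}$ range over all of $\cH\oplus\cH$) forces $\G_0\hat f=\G_1\hat f=0$. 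Granting $\ker\G=A$, the assignment $\t\mapsto A_\t=\G^{-1}(\t)$ is a bijection between linear manifolds $\t\subseteq\cH\oplus\cH$ and manifolds between $A$ and $A^*$: every $\wt A\in\cex$ contains $A=\ker\G$ and hence satisfies $\wt A=\G^{-1}(\G(\wt A))$, which gives surjectivity and injectivity at once. To pass to closed objects I would note that $A^*$ carries the graph norm making it a Hilbert space and that $\G$ is bounded with respect to it; the open mapping theorem then shows that $A_\t$ is closed iff $\t$ is closed.

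For the symmetry/self-adjointness clause I would establish the identity $(A_\t)^*=A_{\t^*}$. Since $A\subseteq A_\t$ we have $(A_\t)^*\subseteq A^*$, so for $\hat g\in A^*$ Green's identity gives $\hat g\in(A_\t)^*$ iff $(\G_1\hat f,\G_0\hat g)-(\G_0\hat f,\G_1\hat g)=0$ for all $\hat f\in A_\t$, which, because $\{\G_0\hat f,\G_1\hat f\}$ runs over $\t$, says precisely that $\{\G_0\hat g,\G_1\hat g\}\in\t^*$, i.e. $\hat g\in A_{\t^*}$. Hence $A_\t\subseteq A_\t^*$ iff $\t\subseteq\t^*$ and $A_\t=A_\t^*$ iff $\t=\t^*$, which completes (\romannumeral 1).

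For (\romannumeral 2) I would introduce the two analytic objects attached to $\Pi$. Put $A_0:=\ker\G_0=A_{\{0\}\oplus\cH}$; since $\{0\}\oplus\cH$ is a self-adjoint relation, (\romannumeral 1) gives $A_0=A_0^*$. Writing $\wh\gN_\l(A)=\{\{f,\l f\}:f\in\gN_\l(A)\}$, the restriction $\G_0\up\wh\gN_\l(A)$ is a bounded bijection onto $\cH$ (as $A^*=A_0\dotplus\wh\gN_\l(A)$ and $\G_0$ kills $A_0$), so the $\gamma$-field $\gamma(\l):=\pi_1(\G_0\up\wh\gN_\l(A))^{-1}$ and the Weyl function $M(\l):=\G_1(\G_0\up\wh\gN_\l(A))^{-1}\in R_u[\cH]$ are well defined and holomorphic on $\CR$. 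The goal is the Krein--Naimark resolvent formula: for $\tau\in\RH$ the operator function
\[
R_\tau(\l)=(A_0-\l)^{-1}-\gamma(\l)\,(M(\l)+\tau(\l))^{-1}\,\gamma(\ov\l)^*,\qquad \l\in\CR,
\]
(with $(M(\l)+\tau(\l))^{-1}$ read off the decomposition \eqref{2.9} of $\tau$ into operator and multivalued parts) coincides with $(A_{-\tau(\l)}-\l)^{-1}$ and equals the compressed resolvent $P_\gH(\wt A-\l)^{-1}\up\gH$ of a minimal exit space extension $\wt A=\wt A^*$. The identification $R_\tau(\l)=(A_{-\tau(\l)}-\l)^{-1}$ is a direct computation: for $h\in\gH$ set $f_\l:=R_\tau(\l)h$ and $\hat f_\l:=\{f_\l,\l f_\l+h\}$; one checks $\hat f_\l\in A^*$ and $\{\G_0\hat f_\l,\G_1\hat f_\l\}\in-\tau(\l)$, i.e. $\hat f_\l\in A_{-\tau(\l)}$. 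The constant self-adjoint case $\tau(\l)\equiv\t=\t^*$ then collapses the exit space: $M(\l)+\t$ is boundedly invertible, $R_\tau(\l)=(A_{-\t}-\l)^{-1}$ is already a genuine resolvent on $\gH$, and so $\wt A_\tau=A_{-\t}$.

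The main obstacle is the bijectivity asserted in (\romannumeral 2), and this is where I expect the real work. In the forward direction, given a minimal $\wt A=\wt A^*$, I would use Straus extension theory to produce, for each $\l$, a proper extension $A_{\theta(\l)}$ with $P_\gH(\wt A-\l)^{-1}\up\gH=(A_{\theta(\l)}-\l)^{-1}$, set $\tau(\l)=-\theta(\l)$, and verify that the Nevanlinna properties $\im\l\cdot\im\tau(\l)\geq0$ and $\tau(\l)^*=\tau(\ov\l)$ force $\tau\in\RH$; this reduces to showing that the compressed resolvent has nonnegative Nevanlinna kernel $(\l-\ov\mu)^{-1}(R_\tau(\l)-R_\tau(\mu)^*)\geq0$, which is exactly the positivity encoded in $\wt A$ being self-adjoint. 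In the converse direction the key input is Naimark's dilation theorem: one realizes the abstractly defined $R_\tau(\cd)$ as the compression of a self-adjoint operator in a larger space, the nonnegativity of its Nevanlinna kernel being precisely the hypothesis that makes the dilation exist, and minimality of the dilation then yields uniqueness of $\wt A_\tau$ and hence injectivity of $\tau\mapsto\wt A_\tau$. The technically delicate point throughout is the bookkeeping for relation-valued $\tau$: one works with the operator part $\tau_0\in R[\cH_0]$ over the summand $\cH_0$ of \eqref{2.9} and handles the multivalued part $\wh\cK$ separately, checking that $(M(\l)+\tau(\l))^{-1}$ still makes sense as a bounded operator even though $\tau(\l)$ need not be everywhere defined.
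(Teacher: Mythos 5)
First, note that the paper itself gives no proof of Theorem \ref{th2.6}: it is quoted from \cite{DM91,Mal92} (with the underlying Krein-type resolvent formula going back to \cite{LanTex77}), so there is no in-paper argument to compare against; your proposal has to stand on its own. Part (\romannumeral 1) does: the chain $\ker\G=A$ (via Green's identity, surjectivity of $\G$, and $(A^*)^*=A$ for closed $A$), the bijection $\t\mapsto\G^{-1}(\t)$ using $A=\ker\G\subset\wt A$, the closedness transfer via boundedness of $\G$ on $A^*$ (itself a closed subspace of $\gH\oplus\gH$, so the open mapping theorem applies after factoring through $A^*/A\cong\cH\oplus\cH$), and the identity $(A_\t)^*=A_{\t^*}$ is exactly the standard and complete proof. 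Your sign bookkeeping in (\romannumeral 2) is also consistent with the paper's convention: with $(A_\t-\l)^{-1}=(A_0-\l)^{-1}+\gamma(\l)(\t-M(\l))^{-1}\gamma(\ov\l)^*$ and $\t=-\tau(\l)$ one recovers your formula for $R_\tau(\l)$, and the constant self-adjoint case collapses correctly to $\wt A_\tau=A_{-\t}$.

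The genuine gap is in the existence half of (\romannumeral 2). You assert that ``the nonnegativity of its Nevanlinna kernel'' $(\l-\ov\mu)^{-1}(R_\tau(\l)-R_\tau(\mu)^*)\geq 0$ is ``precisely the hypothesis that makes the dilation exist.'' As stated this would fail: the class of $\B(\gH)$-valued Nevanlinna functions is strictly larger than the class of compressed resolvents (a nonzero constant self-adjoint function is Nevanlinna but no compressed resolvent, since $||P_\gH(\wt A-\l)^{-1}\up\gH||\leq |\im\l|^{-1}$). The correct characterization of generalized resolvents requires the stronger Schwarz-type kernel inequality $(\l-\ov\mu)^{-1}(R(\l)-R(\mu)^*)\geq R(\mu)^*R(\l)$, together with the compatibility $R(\l)(f'-\l f)=f$ for $\{f,f'\}\in A$ so that $A\subset\wt A$; alternatively one verifies that $R_\tau$ admits the integral representation $R_\tau(\l)=\int_\bR (t-\l)^{-1}\,d\Si(t)$ with $\Si(\bR)=I_\gH$ (this needs the asymptotics $\lim_{y\to\infty}yR_\tau(iy)=-I$ and $\lim_{y\to\infty}y^{-1}R_\tau(iy)=0$, which are nontrivial for relation-valued $\tau$) and then applies Naimark's dilation to the positive operator measure $\Si$. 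A cleaner modern route that avoids verifying the kernel inequality altogether is the coupling method: realize $\tau\in\RH$ as the Weyl function of a boundary triplet for a model symmetric relation in an auxiliary space and take the self-adjoint coupling with $A$, whose compressed resolvent is computed directly to be $R_\tau$. Similarly, in the forward direction your reduction to kernel positivity undersells what is needed: to get $\tau\in\RH$ from a minimal $\wt A=\wt A^*$ one must establish the Straus-family properties (each $\theta(\l)$ maximal dissipative for $\l\in\bC_+$, $\theta(\ov\l)=\theta(\l)^*$, holomorphy in a suitable sense) before the boundary triplet translates $-\theta(\l)$ into a Nevanlinna family. These are exactly the points carrying the real analytic content in \cite{LanTex77,DM91,Mal92}; your outline names the right objects but leaves this decisive step unsupported.
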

Note that the same  parametrization $\wt A=\wt A_\tau$ of exit space extensions $\wt A$ of $A$ can be also given by means of the Krein formula for generalized resolvents (see e.g. \cite{LanTex77,DM91,Mal92}).
\begin{definition}\label{def2.8}
The linear relation $C(\wt A)$ in $\gH$ defined by
\begin{gather*}
C(\wt A):=P_{\gH}\wt A\up \gH=\{\{f,P_\gH f'\}: \{f,f'\}\in\wt A, \; f\in\gH\}
\end{gather*}
is called the compression of the exit space  $\wt A =\wt A^*$ of $A$.
\end{definition}
Clearly, $C(\wt A)$ is a  symmetric extension of $A$. Note also that the equality
\begin{gather}\label{2.12.1}
\Phi(\wt A):=\{\{P_\gH f, P_\gH f'\}:\{f,f'\}\in \wt A \}
\end{gather}
defines a linear relation $\Phi (\wt A)\subset A^*$ (see e.g. \cite{DM06}).

A characterization of the compression $\CAt$ in terms of the parameter $\tau$ is given by the following theorem obtained in our paper \cite{Mog19}.
\begin{theorem}\label{th2.9}
Assume that $\Pi=\{\bC^m,\G_0,\G_1\}$ is a boundary triplet for $A^*$ (in this case $n_+(A)=n_-(A)=m$). Let  $\tau\in \wt R(\bC^m)$, let $\wt A_\tau=\wt A_\tau^*$ be the corresponding exit space extension of $A$ and let $\CAt$ be the compression of $\wt A_\tau$. Assume also that $\tau_0\in R[\cH_0]$  and $\cK$ are the operator and multivalued parts of $\tau$ respectively  (see \eqref{2.9}). Then:

{\rm (i)} the equalities  $\cB_{\tau_0}=\lim\limits_{y\to\infty}\tfrac 1 {iy}\tau_0(iy)$ and
\begin{gather*}
\dom D_{\tau_0} =\{h\in\cH_0: \lim_{y\to\infty}  y\im (\tau_0(iy)h,h)< \infty\}, \quad D_{\tau_0} h=\lim_{y\to\infty}  \tau_0(iy)h, \quad h\in \dom \cN_{\tau_0}
\end{gather*}
correctly define the nonnegative  operator $\cB_{\tau_0}\in \B (\cH_0)$ and the operator  $D_{\tau_0}:\dom D_{\tau_0}\to \cH_0 \;\; (\dom D_{\tau_0} \subset \cH_0) $;

{\rm (i)} $\CAt=A_{\eta_\tau}$  with the symmetric linear relation $\eta_\tau\in \C (\bC^m)$ given by
\begin{gather}\label{2.13}
\eta_\tau=\{\{h,- D_{\tau_0}h+\cB_{\tau_0}h'+k \}: h\in\dom D_{\tau_0}, h'\in  \cH_0, k\in\cK\}.
\end{gather}
\end{theorem}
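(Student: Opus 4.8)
The plan is to treat the two assertions separately, deriving (i) from the integral representation of $\tau_0$ and (ii) from the Krein formula for the generalized resolvent together with an asymptotic analysis as $\l=iy\to i\infty$. For (i) I would start from the Herglotz--Nevanlinna representation of the (matrix) Nevanlinna function $\tau_0\in R[\cH_0]$,
\[
\tau_0(\l)=\a_0+\b_0\l+\int_\bR\Big(\tfrac1{t-\l}-\tfrac t{1+t^2}\Big)\,d\Si_0(t),\qquad \a_0=\a_0^*,\ \ \b_0=\b_0^*\geq0,
\]
with a nondecreasing $\B(\cH_0)$-valued measure $\Si_0$ satisfying $\int(1+t^2)^{-1}d\Si_0(t)\in\B(\cH_0)$. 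Dividing by $iy$ and letting $y\to\infty$ kills the constant and integral terms, so $\tfrac1{iy}\tau_0(iy)\to\b_0$; this shows $\cB_{\tau_0}=\b_0\geq0$ is well defined. Next, from $y\im(\tau_0(iy)h,h)=y^2(\b_0h,h)+\int\tfrac{y^2}{t^2+y^2}\,d(\Si_0(t)h,h)$ I read off that the limit is finite exactly when $\b_0h=0$ and $\int_\bR d(\Si_0(t)h,h)<\infty$; on this set $\int\tfrac1{t-iy}d\Si_0(t)h\to0$, whence $\tau_0(iy)h\to D_{\tau_0}h=\a_0h-\int\tfrac t{1+t^2}d\Si_0(t)h$. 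This simultaneously shows that $D_{\tau_0}$ is well defined and symmetric on $\dom D_{\tau_0}$ and that $\dom D_{\tau_0}\subset\ker\cB_{\tau_0}$, which is the compatibility making $\eta_\tau$ in \eqref{2.13} symmetric.

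For (ii), observe first that $C(\wt A_\tau)$ is a symmetric extension of $A$ with $A\subset C(\wt A_\tau)\subset A^*$, so by Theorem \ref{th2.6}(i) it equals $A_\eta$ for the symmetric relation $\eta=\{\{\G_0\wh f,\G_1\wh f\}:\wh f\in C(\wt A_\tau)\}$; everything reduces to proving $\eta=\eta_\tau$. The analytic form of Theorem \ref{th2.6}(ii) is the Krein formula
\[
R_\l:=P_\gH(\wt A_\tau-\l)^{-1}\up\gH=(A_0-\l)^{-1}-\g(\l)(\tau(\l)+M(\l))^{-1}\g^*(\ov\l),\qquad\l\in\CR,
\]
where $A_0=\ker\G_0$ and $\g(\cd),M(\cd)$ are the $\g$-field and the Weyl function of $\Pi$. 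Using the standard decomposition of $\wh f\in A^*$ into its $\ker\G_0$-component and its component $\{\g(\l)\G_0\wh f,\l\g(\l)\G_0\wh f\}$ along $\gN_\l(A)$, a direct computation gives, for every $\wh f=\{f,f_*\}\in A^*$,
\[
f-R_\l(f_*-\l f)=\g(\l)\xi_\l,\qquad \xi_\l=\G_0\wh f+(\tau(\l)+M(\l))^{-1}(\G_1\wh f-M(\l)\G_0\wh f).
\]
Since $\dom C(\wt A_\tau)=\dom\wt A_\tau\cap\gH$, the spectral theorem for $\wt A_\tau$ identifies this domain as $\{f\in\gH:\int_\bR s^2\,d(F(s)f,f)<\infty\}$, where $F$ is the generalized spectral function of $R_\l$.

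The heart of the matter is to let $y\to\infty$. Using $\g(iy)\to0$ and $iyR_{iy}f\to-f$ (valid for $f\in\gH$), one feeds the identity for $\xi_{iy}$ into the domain/action description of $C(\wt A_\tau)$ and shows that $\wh f\in C(\wt A_\tau)$ holds if and only if the boundary values $\{\G_0\wh f,\G_1\wh f\}$ lie in the graph limit of the relations $-\tau(iy)$ as $y\to+\infty$. It then remains to compute this graph limit from part (i) and the decomposition \eqref{2.9}, which yields exactly $\eta_\tau$: a direction $h\in\dom D_{\tau_0}\subset\ker\cB_{\tau_0}$ survives in the limit and contributes the value $-D_{\tau_0}h$; a direction in $\ran\cB_{\tau_0}$ can enter only through vanishing sequences $h_y=(iy)^{-1}h'$, producing the multivalued freedom $\cB_{\tau_0}h'$; and the multivalued part $\cK=\mul\tau(\l)$ contributes the free summand $k\in\cK$. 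This reproduces \eqref{2.13} and closes the proof.

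The step I expect to be the main obstacle is this passage to the limit in the membership condition, i.e. justifying that the coupling of $\gH$ to the exit space $\wt\gH\ominus\gH$ is captured precisely by the behaviour of $(\tau(iy)+M(iy))^{-1}$. Because both $\tau(iy)$ and $M(iy)$ grow linearly, one must split $\bC^m$ according to $\ker\cB_{\tau_0}$, $\ran\cB_{\tau_0}$ and $\cK$ and control $(\tau(iy)+M(iy))^{-1}$ on each block uniformly enough to interchange the limit with the membership test; the condition $\lim_{y}y\im(\tau_0(iy)h,h)<\infty$ defining $\dom D_{\tau_0}$ is exactly what makes the relevant limits exist. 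Handling this cleanly is most conveniently done through an explicit model of the minimal exit space extension $\wt A_\tau$ attached to $\tau$, which is the route taken in \cite{Mog19}.
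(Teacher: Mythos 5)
Your part (i) is essentially the standard and correct argument: the Herglotz representation of $\tau_0$, the identification $\cB_{\tau_0}=\b_0$, and the characterization $\dom D_{\tau_0}=\{h:\b_0h=0,\ \int_\bR d(\Si_0(t)h,h)<\infty\}$ together with $\dom D_{\tau_0}\subset\ker\cB_{\tau_0}$ (which, as you note, is what makes \eqref{2.13} symmetric). Some details need the Cauchy--Schwarz inequality for matrix measures (e.g.\ the claim $\int(t-iy)^{-1}d\Si_0(t)h\to0$), but these are routine. Note that the paper itself contains no proof of Theorem \ref{th2.9}; it is quoted from \cite{Mog19}, where the argument goes through an explicit model of $\wt A_\tau$ -- a route you yourself point to at the end. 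The decisive comparison is therefore with your step (ii), and there the proposal has a genuine gap.

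The claim that $\wh f\in \CAt$ if and only if $\{\G_0\wh f,\G_1\wh f\}$ lies in the strong graph limit of the relations $-\tau(iy)$ as $y\to+\infty$ is false, and with it the derivation of \eqref{2.13} collapses. Take $m=1$ and $\tau(\l)=\sqrt\l$. Then $\cB_{\tau_0}=\lim_y\sqrt{iy}/(iy)=0$ and $y\,\im\tau(iy)=y^{3/2}/\sqrt2\to\infty$, so $\dom D_{\tau_0}=\{0\}$, $\eta_\tau=\{\{0,0\}\}$ and Theorem \ref{th2.9} gives the minimal answer $\CAt=A$. But the strong graph limit of $-\sqrt{iy}$ is $\{0\}\oplus\bC$ (take $h_y=-h'/\sqrt{iy}\to0$, $-\sqrt{iy}\,h_y=h'$), and your criterion would yield $\CAt=\{\wh f\in A^*:\G_0\wh f=0\}$, a self-adjoint extension -- strictly larger. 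Dually, for $\tau(\l)=-\l^{-1/2}$ one has $\tau(iy)\to0$ in norm, so the graph limit is the zero \emph{operator} on all of $\bC$, while $y\,\im\tau(iy)=\sqrt{y/2}\to\infty$ forces again $\eta_\tau=\{\{0,0\}\}$. The point is that the defining condition $\lim_y y\,\im(\tau_0(iy)h,h)<\infty$ is strictly stronger than existence of $\lim_y\tau_0(iy)h$, and graph convergence only sees the latter; your phrase ``a direction $h\in\dom D_{\tau_0}$ survives in the limit'' conflates the two. What the finiteness of $y\,\im(\tau_0(iy)h,h)$ actually encodes is that the corresponding vector does not leak into the exit space (control of $y\,\|P_{\wt\gH\ominus\gH}(\wt A_\tau-iy)^{-1}f\|^2$, equivalently of the full quadratic form of $\im\bigl((\tau(iy)+M(iy))^{-1}\cdot,\cdot\bigr)$, not just of the limit of $\tau(iy)$ as a relation). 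Establishing this is exactly the content of the model-based analysis in \cite{Mog19}; since your plan replaces it by the graph-limit equivalence, the central step of (ii) is missing, and as stated the plan would prove a wrong (strictly larger) boundary relation $\eta\supsetneq\eta_\tau$ whenever $\tau_0(iy)$ has directions of sublinear growth with divergent $y\,\im(\tau_0(iy)h,h)$.
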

\subsection{The spaces $\cL^2(\s;\bC^m)$ and $L^2(\s;\bC^m)$ }
Recall that a non-decreasing operator function $\s(\cd): \bR\to \B(\bC^m)$ is called a
distribution function if it is left continuous and satisfies $\s(0)=0$.
\begin{theorem}\label{th2.10} $\,$\cite{DunSch,MalMal03}
Let $\s(\cd): \bR\to \B (\bC^m)$  be a distribution function. Then:
\begin{enumerate}\def\labelenumi{\rm (\arabic{enumi})}
\item
There exist a scalar measure $\nu$ on $\cA$ and a function $\Psi:\bR\to \B (\bC^m)$ (uniquely defined by $\nu$ up to $\nu$-a.e.) such that $\Psi (s)\geq 0$ $\nu$-a.e. on $\bR$, $\nu([\a,\b))<\infty$ and $\s(\b)-\s(\a)=\int\limits_{[\a,\b)}\Psi(s)\, d \nu $ for any finite interval $[\a,\b)\subset\bR$.
\item
The set  $\cL^2(\s;\bC^m)$ of all Borel-measurable functions $f=f(\cd):\bR\to \bC^m$ satisfying
\begin {equation*}
||f||_{\cL^2(\s;\bC^m)}^2=\int_\bR (d\s(s)f(s),f(s)):=\int_\bR(\Psi(s)f(s),f(s))_{\bC^m}\, d\nu <\infty
\end{equation*}
is a semi-Hilbert space with the semi-scalar product
\begin {equation*}
(f,g)_{\cL^2(\s;\bC^m)}=\int_\bR (d\s(s)f(s),g(s)):=\int_\bR(\Psi(s)f(s),g(s))_{\bC^m}\,d\nu, \quad f,g\in \cL^2(\s;\bC^m).
\end{equation*}
\end{enumerate}
\end{theorem}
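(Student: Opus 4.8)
The plan is to reduce both parts to classical scalar measure theory by decomposing $\s$ entrywise against its trace. For part (1) I would build the scalar measure $\nu$ from the trace of $\s$. Since $\s(\cd)$ is non-decreasing, left continuous and $\s(0)=0$, the scalar function $t\mapsto {\rm tr}\,\s(t)$ is an ordinary non-decreasing left continuous distribution function, so it induces a unique Lebesgue--Stieltjes measure $\nu$ on $\cA$ with $\nu([\a,\b))={\rm tr}(\s(\b)-\s(\a))<\infty$. Each entry $\s_{ij}(\cd)$ has bounded variation on finite intervals and defines a complex Lebesgue--Stieltjes measure $\mu_{ij}$ with $\mu_{ij}([\a,\b))=\s_{ij}(\b)-\s_{ij}(\a)$. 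The key pointwise bound is that for a positive semidefinite matrix $M\geq 0$ one has $|M_{ij}|\leq\sqrt{M_{ii}M_{jj}}\leq {\rm tr}\,M$; applied to $M=\s(\b)-\s(\a)\geq 0$ this gives $|\mu_{ij}|\leq\nu$ as measures, hence $\mu_{ij}\ll\nu$. The Radon--Nikodym theorem then produces densities $\Psi_{ij}=d\mu_{ij}/d\nu$, and setting $\Psi=(\Psi_{ij})$ yields the identity $\s(\b)-\s(\a)=\int_{[\a,\b)}\Psi\,d\nu$. Uniqueness up to $\nu$-a.e. is the uniqueness clause of Radon--Nikodym, since the integrals over all finite intervals determine $\Psi$.

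The positivity $\Psi(s)\geq 0$ $\nu$-a.e. I would obtain by a countable density argument: fix a countable dense set $\{c_k\}\subset\bC^m$; for each $k$ the scalar function $s\mapsto(\Psi(s)c_k,c_k)$ integrates to $((\s(\b)-\s(\a))c_k,c_k)\geq 0$ over every interval, hence is $\geq 0$ $\nu$-a.e. Intersecting the countably many exceptional null sets and passing to the limit in $c$ by continuity gives $(\Psi(s)c,c)\geq 0$ for all $c\in\bC^m$ off a single $\nu$-null set, i.e. $\Psi(s)\geq 0$ $\nu$-a.e.

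For part (2), positive semidefiniteness and sesquilinearity of the form are immediate from $\Psi\geq 0$. Writing $\Psi=\Psi^{1/2}\Psi^{1/2}$ with the Borel measurable nonnegative square root, the pointwise Cauchy--Schwarz inequality for $\Psi(s)$ together with the scalar Cauchy--Schwarz inequality yields $|(f,g)_{\cL^2(\s;\bC^m)}|\leq\|f\|\,\|g\|$, which in turn gives the triangle inequality and shows that $\cL^2(\s;\bC^m)$ is a vector space on which $\|\cd\|$ is a seminorm. The substantive point is completeness. Here I would observe that $f\mapsto\Psi^{1/2}f$ is an isometry of $\cL^2(\s;\bC^m)$ into the Hilbert space $L^2(\nu;\bC^m)$, since $\|f\|^2=\int_\bR|\Psi^{1/2}(s)f(s)|^2\,d\nu$.

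Given a Cauchy sequence $(f_n)$ in $\cL^2(\s;\bC^m)$, the images $\Psi^{1/2}f_n$ form a Cauchy sequence in $L^2(\nu;\bC^m)$ and converge there to some $g$; passing to a subsequence I may assume $\Psi^{1/2}f_n\to g$ $\nu$-a.e. Since each $\Psi^{1/2}(s)f_n(s)$ lies in the closed subspace $\ran\Psi^{1/2}(s)$, the a.e.\ limit satisfies $g(s)\in\ran\Psi^{1/2}(s)$ $\nu$-a.e. Defining $f(s):=\Psi^{1/2}(s)^{+}g(s)$ through the measurable Moore--Penrose pseudoinverse gives $\Psi^{1/2}(s)f(s)=g(s)$ a.e., whence $f\in\cL^2(\s;\bC^m)$ and $\|f_n-f\|^2=\|\Psi^{1/2}f_n-g\|_{L^2(\nu;\bC^m)}^2\to 0$. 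Thus $\cL^2(\s;\bC^m)$ is complete, i.e.\ a semi-Hilbert space. The main obstacle is exactly this completeness step: because $\Psi$ may be singular, the isometry $f\mapsto\Psi^{1/2}f$ need not be onto $L^2(\nu;\bC^m)$, and one must verify that the limit $g$ stays in the pointwise ranges of $\Psi^{1/2}$ and can be pulled back measurably — this degeneracy is precisely what makes the space only semi-Hilbert rather than Hilbert.
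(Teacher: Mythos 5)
Your proposal is correct, but note that the paper contains no proof of Theorem \ref{th2.10} to compare against: it is quoted as a known result from \cite{DunSch,MalMal03}. Your argument is in substance the standard construction used in those references --- $\nu$ is the Lebesgue--Stieltjes measure of ${\rm tr}\,\s(\cd)$, the bound $|M_{ij}|\le\sqrt{M_{ii}M_{jj}}\le{\rm tr}\,M$ for $M\ge 0$ gives $\mu_{ij}\ll\nu$, and $\Psi$ comes from Radon--Nikodym --- and your completeness argument via the isometry $f\mapsto\Psi^{1/2}f$ into $L^2(\nu;\bC^m)$, with the limit pulled back through the Moore--Penrose pseudoinverse, handles the degenerate (non-invertible $\Psi$) case correctly; that is exactly the point that makes $\cL^2(\s;\bC^m)$ only semi-Hilbert. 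Two steps you assert without detail are routine but worth a line each: the interval inequality $|\mu_{ij}([\a,\b))|\le\nu([\a,\b))$ must be propagated to $|\mu_{ij}|(B)\le\nu(B)$ for all Borel $B$ (first to finite disjoint unions of intervals, then by approximation and the definition of total variation), and the Borel measurability of $s\mapsto\Psi^{1/2}(s)$ and of $s\mapsto\Psi^{1/2}(s)^{+}$ needs justification (e.g. $\Psi^{1/2}=\lim_n p_n(\Psi)$ for suitable polynomials $p_n$, and $A^{+}=\lim_{\e\downarrow 0}(A^*A+\e I)^{-1}A^*$, both pointwise limits of measurable functions); also, the a.e.\ limit $g$ is a priori only $\nu$-measurable and should be replaced by a Borel representative before applying the pseudoinverse.
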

\begin{definition}\label{def2.11}$\,$\cite{DunSch}
The Hilbert space $L^2(\s;\bC^m)$ is a Hilbert space of all equivalence classes in $\cL^2(\s;\bC^m)$ with respect to the seminorm $||\cd||_{\cL^2(\s;\bC^m)}$.
\end{definition}
In the following we denote by $\pi_\s$ the quotient map from $\cL^2(\s;\bC^m)$ onto $L^2(\s;\bC^m)$. Two functions $f_1,f_2\in\cL^2(\s;\bC^m)$ are said to be $\s$-equivalent if $\pi_\s f_1 = \pi_\s f_2$, i.e., if $\Psi(s)f_1(s)=\Psi(s)f_2(s)$ $\nu$-a.e on $\bR$.

With a distribution function $\s(\cd):\bR\to \B(\bC^m)$ one associates the $\B(\bC^m)$-valued measure  $\mu_\s$ on $\cA$ given by
\begin{gather}\label {2.14}
\mu_\s(B)=\int_B \Psi(s)\,d\nu, \quad B\in \cA.
\end{gather}
This measure is a  continuation of the measure $\mu_{0\s}$  on finite intervals $[\a,\b)\subset\bR$ defined by $\mu_{0\s}([\a,\b))=\s(\b)-\s(\a)$.

Let $\s(s)(\in \B (\bC^m))$ be a distribution function. For Borel measurable functions $Y(s)(\in \B (\bC^m,\bC^k))$ and $ g(s)(\in \bC^m)$ on $\bR$ we let
\begin{gather}\label {2.16}
\int_\bR Y(s)d\s(s)g(s):=\int_\bR Y(s)\Psi(s)g(s)\, d\nu \, (\in \bC^k)
\end{gather}
where $\nu$ and $\Psi(\cd)$ are defined in Theorem \ref{th2.10}, (1).
\section{Pseudospectral and spectral functions of Hamiltonian systems}
\subsection{Notations}
Let $\cI=[ a,b\rangle\; (-\infty < a< b\leq\infty)$ be an interval of the real line (the endpoint $b<\infty$  might be either
included  to $\cI$ or not). Denote by $AC(\cI;\bC^n)$ the set of functions $f(\cd):\cI\to \bC^n$ which are absolutely
continuous on each segment $[a,\b]\subset \cI$.

An operator-function $Y(\cd):\cI\to \B(\bC^n)$ is called locally integrable if $\int\limits_{[a,b']}||Y(t)||\, dt<\infty$ for each $b'\in\cI$. Assume that $\D(\cd):\cI\to \B(\bC^n)$ is a locally integrable function such that $\D(t)\geq 0$ a.e. on $\cI$. Denote  by $\lI$  the semi-Hilbert  space of  Borel measurable
functions $f(\cd): \cI\to \bC^n$ satisfying $||f(\cd)||_\D^2:=\int\limits_{\cI}(\D
(t)f(t),f(t))\,dt<\infty$ (see e.g. \cite[Chapter 13.5]{DunSch}).  The
semi-definite inner product $(\cd,\cd)_\D$ in $\lI$ is defined by $(f(\cd),g(\cd))_\D=\int\limits_{\cI}(\D (t)f(t),g(t))\,dt,\quad f(\cd),g(\cd)\in \lI$. Moreover, let $\LI$ be the Hilbert space of the equivalence classes in $\lI$ with respect
to the semi-norm $||\cd||_\D$. Denote also by $\pi_\D$  the quotient map from $\lI$ onto
$\LI$ and let $\wt \pi_\D\{f(\cd),g(\cd)\}:=\{\pi_\D f(\cd),
\pi_\D g(\cd)\}, \;\; \{f(\cd),g(\cd)\} \in (\lI)^2$. Clearly, $\ker \pi_\D$ coincides with the set of all Borel measurable functions $f(\cd):\cI\to \bC^n$ such that $\D(t) f(t)=0$ (a.e. on $\cI$).
\subsection{Hamiltonian  systems}
Let as above $\cI=[ a,b\rangle\; (-\infty < a< b\leq\infty)$ be an interval in $\bR$, let $p\in\bN$ and let $n=2p$.  Recall that a Hamiltonian  system of the dimension $n$ on an interval $\cI$ (with the regular endpoint $a$) is a system of differential equations
\begin {equation}\label{3.1}
J y'-B(t)y=\l\D(t)y, \quad t\in\cI, \quad \l\in\bC
\end{equation}
where $B(\cd)$ and $\D(\cd)$ are locally integrable
$\B (\bC^n)$-valued  functions on $\cI$ satisfying $B(t)=B^*(t)$ and  $\D(t)\geq 0$ for any  $t\in\cI$ and $J\in \B (\bC^n)$ is the operator given by \eqref{1.3}. Together with system  \eqref{3.1} we consider the inhomogeneous  system
\begin {equation}\label{3.3}
J y'-B(t)y=\D(t) f(t), \quad t\in\cI,
\end{equation}
where $f(\cd)\in \lI$. A function $y(\cd)\in\AC$ is a solution of \eqref{3.1} (\eqref{3.3}) if it satisfies  \eqref{3.1} (resp. \eqref{3.3})   a.e. on $\cI$. A function $Y(\cd,\l):\cI\to \B (\bC^k,\bC^n)$ is an
operator solution of  \eqref{3.1} if $y(t)=Y(t,\l)h$ is a
(vector) solution of \eqref{3.1}  for every $h\in \bC^k$.  In the sequel we denote by $Y_0(\cd)$ the $\B (\bC^n)$-valued  operator solution of the system
\begin {equation}\label{3.5.1}
J y'-B(t)y=0
\end{equation}
such that $Y_0(0)=I_n$. As is known, $Y_0(t)$ satisfies the identities
\begin{gather}\label{3.5.1.1}
Y_0^*(t)J Y_0(t)= J, \qquad  Y_0(t)J Y_0^*(t)= J
\end{gather}
By using the second identity in \eqref{3.5.1.1} one can easily verify that each solution $y(\cd)$ of \eqref{3.3} admits the representation
\begin{gather}\label{3.5.2}
y(t)=z(t) -Y_0(t)J \int_{[a,t]} Y_0^*(u)\D(u) f(u)\,du,
\end{gather}
where $z(\cd)\in\AC$ is the solution of \eqref{3.5.1} with $z(a)=y(a)$.

As it is known (see e.g.\cite{Kac03,LesMal03}) system
\eqref{3.1} gives rise to the maximal linear relations
$\tma$ and $\Tma$  in  $\lI$ and $\LI$ respectively. Namely, $\tma$ is the set of all pairs $\{y(\cd),f(\cd)\}\in(\lI)^2$ such that  $y(\cd)\in\AC$ and \eqref{3.3} holds a.e. on $\cI$, while $\Tma=\wt\pi_\D\tma$. Moreover for any $y(\cd),z(\cd)\in\dom\tma$ there exists the limit
\begin {equation*}
[y,z]_b:=\lim_{t \uparrow b}(J y(t),z(t)).
\end{equation*}
Next, define the linear relation $\cT_a$ in $\lI$ and the minimal  linear relation $\Tmi$ in $\LI$ by setting
\begin {equation*}
\cT_a=\{\{y(\cd),f(\cd)\}\in\tma: y(a)=0 \;\;\text{and}\;\;\, [y,z]_b=0 \;\;\text{for
every}\;\; z\in\dom\tma\}
\end{equation*}
and $\Tmi=\wt\pi_\D\cT_a$. Then $\Tmi$ is a closed symmetric linear
relation in $\LI$ and $\Tmi^*=\Tma$ \cite{Kac03, LesMal03,Mog12}.

The null manifold $\cN$ of the system \eqref{3.1} is  defined as a linear space of all solutions $y(\cd)$ of \eqref{3.5.1} such that $\D(t)y(t)=0$ (a.e. on $\cI$).

In the sequel we denote by $\cN_\l,\; \l\in\bC,$ the linear space of solutions of the  system \eqref{3.1} belonging to $\lI$. The  numbers $N_+=\dim \cN_i$ and $N_-=\dim\cN_{-i} $ are called the formal
deficiency indices of the system \eqref{3.1}. It was shown in  \cite{KogRof75,LesMal03} that  $N_\pm=\dim \cN_\l, \; \l\in\bC_\pm$
(i.e., $\dim \cN_\l$ does not depend on $\l$ in either $\bC_+$ or $\bC_-$) and $p\leq N_\pm \leq n$. Moreover, deficiency indices of $\Tmi$ are $n_\pm(\Tmi)=N_\pm-\dim \cN$.

Recall  that system \eqref{3.1} is called definite if $\cN=\{0\}$.
\begin{definition}\label{def3.1.0}
Let  $U\in\B (\bC^n,\bC^p)$ be an operator such that
\begin{gather}\label{3.20}
UJU^*=0 \;\; {\rm and} \;\; \ran U=\bC^p.
\end{gather}
System \eqref{3.1} is called $U$-definite if for each $y\in \cN$ the equality $U y(a)=0$ yields $y=0$.
System \eqref{3.1} is called $U$-definite  on an interval $\cI'\subset\cI$ if its restriction on $\cI'$ is $U$-definite.
\end{definition}
Clearly each  definite system is $U$-definite  for any $U$.

It was proved in \cite{KogRof75} that for each definite system there is a compact interval $[a,\b]\subset \cI$ such that the system is definite on $[a,\b]$. In the same way one  proves the following proposition.
\begin{proposition}\label{pr3.1.0.1}
If system \eqref{3.1} is $U$-definite, then there is a compact interval $[a,c]\subset \cI$ such that the system is U-definite on $[a,c]$.
\end{proposition}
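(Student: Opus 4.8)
The plan is to parametrize the homogeneous solutions by their data at the regular endpoint $a$ and then run a finite-dimensional monotonicity argument, in the spirit of \cite{KogRof75} for definite systems. Since every solution $y$ of \eqref{3.5.1} is uniquely determined by $y(a)\in\bC^n$, the map $y\mapsto y(a)$ is a linear isomorphism of the solution space of \eqref{3.5.1} onto $\bC^n$. For $\b\in\cI$ I would set
\begin{gather*}
L_\b=\{y(a):\; y\text{ solves } Jy'-By=0 \text{ on }[a,\b] \text{ and } \D(t)y(t)=0 \text{ a.e. on } [a,\b]\}\subset\bC^n,
\end{gather*}
so that $L_\b$ is the image under $y\mapsto y(a)$ of the null manifold of the system restricted to $[a,\b]$; in particular the system is $U$-definite on $[a,\b]$ if and only if $\ker U\cap L_\b=\{0\}$.

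First I would note the monotonicity $L_{\b_2}\subseteq L_{\b_1}$ whenever $\b_1\leq\b_2$, since a null solution on $[a,\b_2]$ restricts to a null solution on $[a,\b_1]$. Hence $\dim L_\b$ is non-increasing, integer-valued and bounded below by $0$, so it is eventually constant as $\b\uparrow b$; consequently there is a point $c<b$, giving a compact interval $[a,c]\subset\cI$, with $L_\b=L_c$ for all $\b\in[c,b\rangle$, and therefore $L_c=\bigcap_{\b<b}L_\b$. Because $[a,b\rangle=\bigcup_{\b<b}[a,\b]$ is an increasing exhaustion, a vector $v$ lies in $\bigcap_{\b<b}L_\b$ exactly when the solution of \eqref{3.5.1} with $y(a)=v$ satisfies $\D y=0$ a.e. on all of $\cI$, that is, belongs to the full null manifold $\cN$. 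This gives the identification $L_c=\{y(a):y\in\cN\}$.

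It then remains to transfer the hypothesis. By definition the full system is $U$-definite precisely when every $y\in\cN$ with $Uy(a)=0$ vanishes, i.e. when $\ker U\cap\{y(a):y\in\cN\}=\{0\}$. Since $L_c$ equals this subspace, we obtain $\ker U\cap L_c=\{0\}$, which by the remark closing the first paragraph is exactly the assertion that the restriction to $[a,c]$ is $U$-definite. (The definite case treated in \cite{KogRof75} corresponds to $\cN=\{0\}$.)

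I expect the only genuinely delicate point to be the identification $L_c=\{y(a):y\in\cN\}$: it couples the stabilization of the non-increasing integer sequence $\dim L_\b$ with the fact that the a.e.-vanishing condition on $\cI$ is equivalent to its restriction to every $[a,\b]$ under the exhaustion $[a,b\rangle=\bigcup_{\b<b}[a,\b]$. The monotonicity of $L_\b$ and the final translation of $U$-definiteness into $\ker U\cap L_c=\{0\}$ are routine.
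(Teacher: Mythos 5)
Your proof is correct, and it is essentially the argument the paper has in mind: the paper gives no explicit proof but states that the proposition follows ``in the same way'' as the result of \cite{KogRof75} for definite systems, which is precisely the monotone-stabilization argument you wrote out (the non-increasing finite-dimensional subspaces $L_\b$ of initial values of null solutions must stabilize at some $c<b$, their stable value equals $\{y(a):y\in\cN\}$, and $U$-definiteness transfers via $\ker U\cap L_c=\{0\}$). Your identification $L_c=\bigcap_{\b<b}L_\b=\{y(a):y\in\cN\}$ is sound, since ``$\D y=0$ a.e.\ on $\cI$'' is equivalent to the same condition on a countable exhaustion $[a,\b_n]$, $\b_n\uparrow b$.
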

\subsection{Pseudospectral and spectral functions}
Below we suppose that  $U\in\B (\bC^n,\bC^p)$ is an operator satisfying \eqref{3.20}. Then  the following assertion holds (see \cite[Lemma 3.3]{Mog17}).
\begin{assertion}\label{ass3.2}
The equality
\begin{gather}\label{3.22.1}
T=\{\wt\pi_\D \{y,  f\}: \{y,f\}\in\tma,\; Uy(a)=0 \;\;{\rm and}\; \;[y,z]_b=0,\; z\in\dom\tma \}
\end{gather}
defines a  (closed) symmetric extension $T$ of $\Tmi$. Moreover, $T^*=\wt\pi_\D \cT_*$, where $\cT_*$ is the linear relation in $\lI$ given by
\begin{gather}\label{3.22.2}
\cT_*=\{\{y(\cd),f(\cd)\}\in\tma:Uy(a)=0\}
\end{gather}
\end{assertion}

Clearly the domain of $\cT_*$ is
\begin{multline}\label{3.22.3}
\dom\cT_*=\{y(\cd)\in \AC\cap \lI:\, Jy'(t)-B(t)y(t)=\D(t)f_y(t)\\ (\text{a.e. on}\;\; \cI)\;\;
\text{with some} \;\; f_y(\cd)\in \lI
\;\; {\rm and}\;\;Uy(a)=0\} .
\end{multline}
Note that $f_y(\cd)$ in \eqref{3.22.3} is defined by $y(\cd)$ uniquely up to the equivalence with respect to the seminorm $||\cd||_\D$.

In what follows we put $\gH:=\LI$ and $\gH_0:=\gH\ominus \mul T$.  Since $T$ is a symmetric relation in $\gH$,  the decompositions
\begin{gather}\label{3.23}
\gH=\gH_0\oplus\mul T, \qquad T={\rm gr}T_0\oplus \wh\mul T
\end{gather}
hold with $ \wh\mul T =\{0\}\oplus \mul T$ and a (not necessarily densely defined) symmetric operator $T_0$ in $\gH_0$ (this operator is called the operator part of $T$).

Below we denote by $\cL',\;\cL_0$ and $\cD$ the linear manifolds in $\lI$ defined by
\begin{gather}
\cL'=\{f(\cd)\in\lI: \text{ there exists  a solution} \; y(\cd)\; \text{ of \eqref{3.3} such that}\qquad\label{3.23.1}\\
\qquad\qquad\qquad \D(t) y(t)=0\;\; (\text {a.e. on} \;\; \cI), \; U y(a)=0 \;\;{ \rm and} \;\; [y,z]_b=0,\;z\in\dom\tma\}\nonumber\\
\cL_0=\{f(\cd)\in\lI: (f(\cd),g(\cd))_\D=0 \;\; \text{for any}\;\; g(\cd)\in\cL'\}\label{3.24}\\
\cD=\{y(\cd)\in \dom \cT_*:f_y(\cd) \in \cL_0\}\label{3.24.1}
\end{gather}
Clearly, $\mul T=\pi_\D \cL'$ and $\gH_0=\pi_\D\cL_0$.

Let $\f_U(\cd,\l)(\in \B(\bC^p,\bC^n)),\; \l\in\bC,$ be the operator solution of \eqref{3.1} with the initial value $\f_U(a,\l)=-JU^*$. One can easily prove that for each function $f(\cd)\in\lI$ and each point $c\in\cI$ the equality
\begin{gather}\label{3.25}
\wh f_c(s)=\int_\cI \f_U^*(t,s)\D(t) \chi_{[a,c]}(t) f(t)\,dt
\end{gather}
defines a continuous function $f_c(\cd):\bR\to \bC^p$ (the integral in \eqref{3.25} is understood as the Lebesgue integral).
\begin{definition}\label{def3.3} $\,$ \cite{Mog15}
A distribution function $\s(\cd):\bR\to \B(\bC^p)$ is called a pseudospectral function of the system \eqref{3.1} if:

(i) for each function $f(\cd)\in\lI$ and each $c\in\cI$ one has $\wh f_c(\cd)\in\lS$ and  there exists a function $\wh f(\cd) \in \lS$ such that
\begin{gather}\label{3.26}
\lim_{c\uparrow b}||\wh f(\cd)- \wh f_c(\cd)||_{\lS}=0;
\end{gather}

(ii) $||\wh f(\cd)||_{\lS}=0$ for $f(\cd)\in\cL'$ and  the Parseval equality $||\wh f(\cd)||_{\lS}=||f(\cd)||_{\lI}$ holds for all $f(\cd)\in\cL_0$.
\end{definition}
Clearly, the function $\wh f(\cd)$ in Definition \ref{def3.3} is defined by $f(\cd)$ uniquely up to the $\s$-equivalence. This function is called the (generalized) Fourier transform of a function $f(\cd)\in\lI$.

Definition \eqref{3.26} of $\wh f(\cd)$ can be written as
\begin{gather}\label{3.26.1}
\wh f(s)=\int_\cI \f_U^*(t,s)\D(t)  f(t)\,dt,
\end{gather}
where the integral converges in the seminorm of $\lS$.
\begin{definition}\label{def3.3.1}
A distribution function $\s(\cd):\bR\to \B(\bC^p)$ is called a spectral function of the system \eqref{3.1} if for each function $f(\cd)\in\lI$ with compact support the corresponding Fourier transform \eqref{3.26.1} (with the Lebesgue integral in the right hand side) satisfies the Parseval equality $||\wh f(\cd)||_{\lS}=||f(\cd)||_{\lI}$.
\end{definition}
Clearly, for a spectral  function $\s(\cd)$ the   Fourier transform  \eqref{3.26.1} (with the integral convergent in $\lS$) satisfies the Parseval equality $||\wh f(\cd)||_{\lS}=||f(\cd)||_{\lI}$ for every $f(\cd)\in\lI$.
\begin{remark}\label{rem3.3.2}
If $\s(\cd)$ is a pseudospectral function, then the equality
\begin{gather}\label{3.26.2}
V_\s\wt f=\pi_\s \wh f(\cd), \quad \wt f\in\gH,
\end{gather}
where $\wh f(\cd)$ is the Fourier transform of a function  $f(\cd)\in\wt f$, defines a partial isometry $V_\s\in\B (\gH, \LS)$ such that $\ker V_\s=\mul T$ and $||V_\s\wt f||=||\wt f||, \; \wt f\in\gH_0$ (see \eqref{3.23}). Clearly, $V_\s$ is an isometry if and only if $\s(\cd)$ is a spectral function.
\end{remark}
\begin{definition}\label{def3.3.3}
A pseudospectral (spectral) function $\s(\cd)$ of \eqref{3.1} is called orthogonal if $\ran V_\s=L^2(\s;\bC^p)$.
\end{definition}
\begin{proposition}\label{pr3.4}$\,$ \cite{Mog15}
Let $\s(\cd)$ be a pseudospectral function of the system \eqref{3.1}. Then for each function $g(\cd)\in\lS$  the following holds:

{\rm (i)} for  each bounded Borel set $B\subset \bR$ the equality
\begin{gather}\label{3.26.4}
\ov g_B(t)=\int_\bR \f_U(t,s)\, d\s (s) \chi_B(s)g(s), \quad t\in\cI
\end{gather}
defines a function $\ov g_B(\cd)\in\lI$ (the integral in \eqref{3.26.4} exists as the Lebesgue integral, see \eqref{2.16})

{\rm (ii)} there exists a function $\ov g(\cd)\in\lI$ such that for each sequence $\{B_n\}_1^\infty$ of bounded Borel sets $B_n\subset\bR$ satisfying $B_n\subset B_{n+1}$ and $\mu_\s\left(\bR\setminus \bigcup_{n\in\bN} B_n \right )=0$ the following equality holds
\begin{gather}\label{3.27}
\lim_{n\to\infty}||\ov g(\cd)-\ov g_{B_n}(\cd)||_{\lI}=0.
\end{gather}
Equality \eqref{3.27} is written as $\ov g(t)=\int_\bR \f_U(t,s)\, d\s (s) g(s)$, where the integral converges in the seminorm of $\lI$.

Moreover, for each $\wt g\in\LS$ one has
\begin{gather}\label{3.28}
V_\s^* \wt g=\pi_\D \ov g(\cd)=\pi_\D \left (\int_\bR \f_U(\cd,s)\, d\s (s) g(s)\right), \quad g(\cd) \in \wt g.
\end{gather}
\end{proposition}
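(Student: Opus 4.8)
The plan is to derive everything from one Parseval-type duality identity that links the candidate inverse transform $\ov g_B$ to the forward Fourier transform $\wh f$ of Definition \ref{def3.3}. Fix $f(\cd)\in\lI$ with compact support $[a,c]\subset\cI$, so that $\wh f(s)=\int_\cI\f_U^*(t,s)\D(t)f(t)\,dt$ is an honest Lebesgue integral (the limit in \eqref{3.26} is eventually constant). Writing $d\s(s)=\Psi(s)\,d\nu$ as in Theorem \ref{th2.10} and using that $\f_U(t,s)$ is jointly continuous, hence bounded on $[a,c]\times\ov B$ for bounded $B$, I would apply Fubini's theorem to interchange the $s$- and $t$-integrations in $(\ov g_B,f)_\D$. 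Carrying $\D(t)$ and $\f_U(t,s)$ across the inner products (recall $\D=\D^*$) and recognizing the inner $t$-integral as $\wh f(s)$ then gives the key identity
\begin{gather}\label{e:dual}
(\ov g_B,f)_\D=(\chi_B g,\wh f)_{\lS},
\end{gather}
valid for every compactly supported $f(\cd)\in\lI$. (For fixed $t$ the integral in \eqref{3.26.4} converges absolutely and defines a continuous function $\ov g_B(\cd)$ on $\cI$, since $\mu_\s(B)$ is a finite nonnegative matrix for bounded $B$; in particular $\chi_{[a,c]}\ov g_B\in\lI$.)

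Part (i) follows from \eqref{e:dual}. Since $V_\s$ is a partial isometry (Remark \ref{rem3.3.2}), we have $||\wh f||_{\lS}=||V_\s\wt f||\leq||\wt f||=||f||_\D$, so Cauchy--Schwarz gives $|(\ov g_B,f)_\D|\leq||\chi_B g||_{\lS}\,||f||_\D$. Choosing $f=\chi_{[a,c]}\ov g_B$ yields $||\chi_{[a,c]}\ov g_B||_\D^2\leq||\chi_B g||_{\lS}\,||\chi_{[a,c]}\ov g_B||_\D$, hence $||\chi_{[a,c]}\ov g_B||_\D\leq||\chi_B g||_{\lS}$; letting $c\uparrow b$ and applying monotone convergence I would conclude $||\ov g_B||_\D\leq||\chi_B g||_{\lS}<\infty$, i.e. $\ov g_B\in\lI$.

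For part (ii), linearity of the integral in \eqref{3.26.4} gives $\ov g_{B_n}-\ov g_{B_m}=\ov g_{B_n\setminus B_m}$ for $m<n$, so the estimate just established yields $||\ov g_{B_n}-\ov g_{B_m}||_\D\leq||\chi_{B_n}g-\chi_{B_m}g||_{\lS}$. As $\mu_\s(\bR\setminus\bigcup_n B_n)=0$, dominated convergence gives $\chi_{B_n}g\to g$ in $\lS$, so $\{\pi_\D\ov g_{B_n}\}$ is Cauchy in the Hilbert space $\LI$ and converges to some $\pi_\D\ov g$; comparing two exhausting sequences with the same estimate shows the limit is independent of $\{B_n\}$. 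To identify $\ov g$ with $V_\s^*\wt g$, I would pass to the limit in \eqref{e:dual}: for compactly supported $f$,
\begin{gather*}
(\ov g,f)_\D=\lim_{n\to\infty}(\ov g_{B_n},f)_\D=\lim_{n\to\infty}(\chi_{B_n}g,\wh f)_{\lS}\\
=(g,\wh f)_{\lS}=(\wt g,V_\s\wt f)_{\LS}=(V_\s^*\wt g,\wt f)_\gH.
\end{gather*}
Since compactly supported functions are dense in $\LI$ (truncation $\chi_{[a,c]}f\to f$ in $\lI$), this forces $\pi_\D\ov g=V_\s^*\wt g$, which is \eqref{3.28}.

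The main obstacle is the duality identity \eqref{e:dual} itself: one must check the absolute integrability needed to justify Fubini --- this is exactly where boundedness of $B$ and finiteness of $\mu_\s$ on bounded sets are used --- while keeping track of the fact that $\lI$ and $\lS$ are only semi-Hilbert spaces, so that $\wh f$ and $\ov g_B$ are determined only up to $\s$- and $\D$-equivalence respectively. Once \eqref{e:dual} is secured together with the partial-isometry bound $||\wh f||_{\lS}\leq||f||_\D$, the remaining Cauchy-sequence and adjoint-characterization arguments are routine.
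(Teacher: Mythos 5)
The paper does not actually prove Proposition \ref{pr3.4}: it is quoted verbatim from \cite{Mog15}, so there is no in-paper argument to compare yours against, and the only meaningful check is internal correctness. On that score your proof is sound, and it is the natural adjoint-characterization argument for such statements: the Fubini-based duality identity $(\ov g_B,f)_\D=(\chi_B g,\wh f)_{\lS}$ for compactly supported $f$, the contraction bound $\|\wh f\|_{\lS}\le \|f\|_\D$ coming from the partial isometry of Remark \ref{rem3.3.2} (for general $f\in\lI$ one decomposes $\pi_\D f$ along $\gH=\gH_0\oplus\mul T$ and uses $\ker V_\s=\mul T$), the self-testing choice $f=\chi_{[a,c]}\ov g_B$ followed by monotone convergence in $c$, the Cauchy-sequence argument in the complete space $\LI$, and the identification $\pi_\D\ov g=V_\s^*\wt g$ by density of compactly supported functions in $\LI$. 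Each step is legitimate, and you correctly flag the only genuinely analytic point, namely the justification of Fubini.

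Three details deserve to be written out rather than gestured at. First, the absolute integrability behind the duality identity: it follows from $\|\Psi(s)g(s)\|\le\|\Psi(s)\|^{1/2}(\Psi(s)g(s),g(s))^{1/2}$ together with $\int_B\|\Psi(s)\|\,d\nu\le\operatorname{tr}\mu_\s(B)<\infty$ for bounded $B$, and on the $t$-side from $\int_{[a,c]}\|\D(t)f(t)\|\,dt\le\bigl(\int_{[a,c]}\|\D(t)\|\,dt\bigr)^{1/2}\|f\|_\D<\infty$; ``$\mu_\s(B)$ is a finite nonnegative matrix'' is the right reason but not yet the estimate. Second, the norm bound must be stated for functions of the form $h=(\chi_B-\chi_{B'})g$, not only $h=\chi_B g$: since the duality identity is linear in the spectral-side function, one gets $\|\ov h\|_\D\le\|h\|_{\lS}$ for every $h\in\lS$ vanishing outside a bounded set, and this is the form actually used both in the Cauchy step and in the comparison of two exhausting sequences. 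Third, your remark that for compactly supported $f$ the limit in \eqref{3.26} is ``eventually constant'' holds up to $\s$-equivalence (the paper defines $\wh f$ only modulo $\pi_\s$), which is all the identity requires since $(\chi_B g,\cd)_{\lS}$ depends only on the equivalence class. With these points made explicit, the argument is a complete and self-contained proof of Proposition \ref{pr3.4}, including formula \eqref{3.28}.
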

\begin{corollary}\label{cor3.5}
Let $\s(\cd)$ be a pseudospectral function of the system \eqref{3.1}, let $f(\cd)\in\cL_0$ and let $\wh f(\cd)$ be the Fourier transform of $f(\cd)$. Then
\begin{gather}\label{3.29}
f(t)=\int_\bR \f_U(t,s)\, d\s (s) \wh f(s),
\end{gather}
where the integral  converges in the seminorm of $\lI$.
\end{corollary}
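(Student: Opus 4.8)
The plan is to realise the right-hand side of \eqref{3.29} as the image of $\wh f$ under the adjoint $V_\s^*$, and then to exploit that $V_\s$ acts isometrically on the subspace $\gH_0$ that contains $\pi_\D f$. Write $\wt f:=\pi_\D f(\cd)\in\gH$. Since $f(\cd)\in\cL_0$ and $\gH_0=\pi_\D\cL_0$ (see the line after \eqref{3.24.1}), we have $\wt f\in\gH_0$. By the definition of the Fourier transform and of $V_\s$ in Remark \ref{rem3.3.2}, the element $V_\s\wt f=\pi_\s\wh f(\cd)\in\LS$, where $\wh f(\cd)$ is the Fourier transform of the representative $f(\cd)\in\wt f$.

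The key step is to apply $V_\s^*$ and identify the two resulting expressions. On the one hand, by Remark \ref{rem3.3.2} the operator $V_\s$ is a partial isometry with $\ker V_\s=\mul T$ and $\|V_\s\wt f\|=\|\wt f\|$ for $\wt f\in\gH_0$; in view of the decomposition \eqref{3.23} this says that $\gH_0$ is exactly the initial space of $V_\s$, so $V_\s^*V_\s=P_{\gH_0}$ and hence $V_\s^*V_\s\wt f=P_{\gH_0}\wt f=\wt f$. On the other hand, Proposition \ref{pr3.4}(ii), applied to $\wt g:=V_\s\wt f=\pi_\s\wh f(\cd)$ with representative $g:=\wh f\in\wt g$, gives through \eqref{3.28}
\[
V_\s^*(\pi_\s\wh f(\cd))=\pi_\D\ov g(\cd), \qquad \ov g(t)=\int_\bR \f_U(t,s)\,d\s(s)\wh f(s),
\]
where $\ov g(\cd)\in\lI$ and the integral converges in the seminorm of $\lI$ (as the $\|\cd\|_\D$-limit of its partial integrals $\ov g_{B_n}(\cd)$).

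Combining the two computations of $V_\s^*(\pi_\s\wh f(\cd))$ yields $\pi_\D f=\wt f=V_\s^*V_\s\wt f=\pi_\D\ov g$, i.e. $f(\cd)$ and $\ov g(\cd)$ are $\D$-equivalent and $\|f-\ov g\|_\D=0$. Since $\|\ov g-\ov g_{B_n}\|_\D\to 0$ by Proposition \ref{pr3.4}, the triangle inequality gives $\|f-\ov g_{B_n}\|_\D\to 0$, which is precisely the assertion \eqref{3.29} with convergence in the seminorm of $\lI$.

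I do not anticipate a serious obstacle, since the whole argument reduces to the adjoint formula \eqref{3.28} together with the elementary fact that a partial isometry satisfies $V_\s^*V_\s=P_{\gH_0}$ on its initial space. The only point requiring a little care is the final passage from the equality of equivalence classes $\pi_\D f=\pi_\D\ov g$ to the stated seminorm convergence; this is immediate once one recalls that the integral in \eqref{3.29} is by definition the $\|\cd\|_\D$-limit of its partial integrals, so that $\|f-\ov g\|_\D=0$ together with $\ov g_{B_n}\to\ov g$ yields the claim.
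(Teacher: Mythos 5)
Your proof is correct and takes essentially the same approach the paper intends: Corollary \ref{cor3.5} is stated there without a printed proof as an immediate consequence of Proposition \ref{pr3.4} and Remark \ref{rem3.3.2}, and your argument---$\wt f=\pi_\D f\in\gH_0=\pi_\D\cL_0$, the partial-isometry identity $V_\s^*V_\s=P_{\gH_0}$, and the adjoint formula \eqref{3.28} applied to $\pi_\s\wh f$---is exactly the reasoning these results encode. Your closing step, passing from $\pi_\D f=\pi_\D\ov g$ to the seminorm convergence $\|f-\ov g_{B_n}\|_\D\to 0$ via the triangle inequality, correctly fills in the one detail the paper leaves implicit.
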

\begin{remark}\label{rem3.6}
The equality \eqref{3.29} is called the inverse Fourier transform of a function $f(\cd)$.  Clearly, \eqref{3.29} is valid for each $f(\cd)\in\lI$ if and only if $\s(\cd)$ is a spectral function.
\end{remark}
\begin{remark}\label{rem3.7}
According to \cite{Mog15} a distribution function $\s(\cd):\bR\to \B (\bC^p)$ is called a $q$-pseudospectral function of the system \eqref{3.1} if the condition (i)  of Definition \ref{def3.3} is satisfied and the Fourier transform $V_\s$ of the form \eqref{3.26.2} is a  partial isometry from $\gH$ to $\LS$. According to \cite[Proposition 3.8]{Mog15} for each $q$-pseudospectral function $\s(\cd)$ one has $\mul T\subset \ker V_\s$. This implies that for a pseudospectral function $\s(\cd)$ the Fourier transform $V_\s$ has the minimally possible kernel $\ker V_\s$ among all $q$-pseudospectral functions and hence the inverse Fourier transform \eqref{3.29} is valid for functions $f(\cd)$ from the maximally possible set (namely, from the set $\cL_0$).  This facts justify our interest to pseudospectral functions.
\end{remark}

\begin{proposition}\label{pr3.9}$\,$ \cite{Mog15}
Assume that:

{\rm (A1)} system \eqref{3.1}has equal formal deficiency indices $N_+=N_-=:d$;

{\rm (A2)} $U\in\B(\bC^n,\bC^p)$ is an operator  satisfying  \eqref{3.20} and system \eqref{3.1} is $U$-definite;

{\rm (A3)} $\G_b=(\G_{0b}, \G_{1b})^\top:\dom \tma \to \bC^{d-p}\oplus \bC^{d-p}$ is a surjective  operator satisfying
\begin{gather*}
[y,z]_b=(\G_{0b}y,\G_{1b}z)- (\G_{1b}y,\G_{0b}z),\quad y,z \in \dom\tma
\end{gather*}
(such an operator exists in view of \cite[Lemma 4.1]{Mog15}).

Moreover, let $T$ be the symmetric extension \eqref{3.22.1} of $\Tmi$. Then:

{\rm (i)} for each pair $\{\wt y, \wt f\}\in T^*$  there exists a unique  function $y(\cd)\in\dom\cT_* $ such that $\pi_\D y(\cd)=\wt y$ and $\pi_\D f_y(\cd)=\wt f$;

{\rm (ii)} the  collection $\Pi=\{\bC^{d-p},\G_0,\G_1\}$ with operators $\G_j:T^*\to \bC^{d-p}$ given by
\begin{gather}\label{3.32}
\G_0\{\wt y,\wt f\}=\G_{0b} y, \qquad \G_1\{\wt y,\wt f\}=-\G_{1b} y, \quad \{\wt y,\wt f\}\in T^*
\end{gather}
is a boundary triplet for $T^*$ (in \eqref{3.32} $y(\cd)\in \wt y$ is a function from statement {\rm (i)}).
\end{proposition}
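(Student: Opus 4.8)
The plan is to reduce every statement about $T^*$ to a statement about honest functions via a canonical representative, and then to read off the two defining properties of a boundary triplet—the abstract Green's identity and surjectivity of $\G=(\G_0,\G_1)^\top$—from the Lagrange identity together with the boundary behaviour of the functions. For part (i), existence is immediate: by Assertion~\ref{ass3.2} we have $T^*=\wt\pi_\D\cT_*$, so each $\{\wt y,\wt f\}\in T^*$ is the image $\wt\pi_\D\{y,f_y\}$ of some $\{y,f_y\}\in\cT_*$ (see \eqref{3.22.2}), i.e.\ of some $y\in\dom\cT_*$ with $\pi_\D y=\wt y$ and $\pi_\D f_y=\wt f$. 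Uniqueness is exactly where $U$-definiteness enters: if $y_1,y_2\in\dom\cT_*$ both represent $\{\wt y,\wt f\}$, then $y:=y_1-y_2$ satisfies $\D y=0$ and $\D(f_{y_1}-f_{y_2})=0$ a.e., hence $Jy'-By=\D(f_{y_1}-f_{y_2})=0$; thus $y\in\cN$ while $Uy(a)=Uy_1(a)-Uy_2(a)=0$, and Definition~\ref{def3.1.0} forces $y=0$. I would record this bijective correspondence $\{\wt y,\wt f\}\leftrightarrow y$, since it carries all of part (ii).

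For the Green's identity I would take $\{\wt y_1,\wt f_1\},\{\wt y_2,\wt f_2\}\in T^*$ with representatives $y_1,y_2$ and compute the left-hand side as $(\wt f_1,\wt y_2)_\gH-(\wt y_1,\wt f_2)_\gH=\int_\cI[(\D f_{y_1},y_2)-(y_1,\D f_{y_2})]\,dt$. Substituting $\D f_{y_j}=Jy_j'-By_j$ and using $J^*=-J$, $B=B^*$, the integrand collapses to $\tfrac{d}{dt}(Jy_1,y_2)$, so the integral equals $[y_1,y_2]_b-[y_1,y_2]_a$ with $[y,z]_t:=(Jy(t),z(t))$. The term at $a$ vanishes: every $y_j(a)$ lies in $\ker U$, and since $U(JU^*)=0$ by \eqref{3.20} while $\dim\ker U=n-p=p={\rm rank}(JU^*)$, one has $\ker U=\ran(JU^*)$, so $y_j(a)=JU^*\xi_j$; then $[y_1,y_2]_a=(J^2U^*\xi_1,JU^*\xi_2)=-(\xi_1,UJU^*\xi_2)=0$ because $J^2=-I$. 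The term at $b$ is furnished by (A3): $[y_1,y_2]_b=(\G_{0b}y_1,\G_{1b}y_2)-(\G_{1b}y_1,\G_{0b}y_2)$, which upon inserting $\G_0\{\wt y,\wt f\}=\G_{0b}y$ and $\G_1\{\wt y,\wt f\}=-\G_{1b}y$ is precisely $(\G_1\,\cd,\G_0\,\cd)-(\G_0\,\cd,\G_1\,\cd)$. Hence the left-hand side equals $[y_1,y_2]_b$, equal to the right-hand side, and the Green's identity holds.

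The remaining, and genuinely hard, point is surjectivity of $\G$. Since by (A3) the map $\G_b$ is already surjective from $\dom\tma$ onto $\bC^{d-p}\oplus\bC^{d-p}$, I would fix data $(h_0,h_1)$, choose $y_1\in\dom\tma$ with $\G_{0b}y_1=h_0$, $\G_{1b}y_1=-h_1$, and then repair the value at $a$: it suffices to produce $w\in\dom\tma$ with $\G_b w=0$ and $Uw(a)=Uy_1(a)$, since then $y:=y_1-w\in\dom\cT_*$ realizes the prescribed data. Equivalently, I must show that $\{y\in\dom\tma:\G_b y=0\}$ surjects onto $\bC^p$ under $y\mapsto Uy(a)$. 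This is the main obstacle, and it is exactly where the degeneracy of $\D$ bites: one cannot simply truncate a homogeneous solution near $b$, because the cut-off contribution $J\chi'z$ need not lie in $\ran\D$ and the truncation may fail to belong to $\dom\tma$. The plan is instead to build $w$ as the solution of the inhomogeneous system \eqref{3.3} with compactly supported right-hand side: by the variation-of-constants formula \eqref{3.5.2}, prescribing $w(a)$ controls $Uw(a)$ freely (as $\ran U=\bC^p$), while beyond the support $w$ is again a homogeneous solution whose data can be steered—using the freedom in the right-hand side, $U$-definiteness on a compact subinterval (Proposition~\ref{pr3.1.0.1}), and the finite-dimensionality of $\ran\G_b$—so that $w\in\lI$ near $b$ with $\G_b w=0$.

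As an alternative route to surjectivity I would invoke the abstract criterion: once the Green's identity holds, $\G$ is surjective as soon as $\ker\G=T$ and $\dim\bC^{d-p}=n_\pm(T)$. The first holds because, by (A3) and surjectivity of $\G_b$, the condition $[y,z]_b=0$ for all $z\in\dom\tma$ is equivalent to $\G_b y=0$, so $\ker\G=\{\wt\pi_\D\{y,f\}:Uy(a)=0,\ \G_b y=0\}=T$ (cf.\ \eqref{3.22.1}). The task then reduces to the index identity $n_\pm(T)=d-p$, which I would obtain from $n_\pm(\Tmi)=N_\pm-\dim\cN$ by accounting for the relaxation of $y(a)=0$ to $Uy(a)=0$ and the injectivity of $y\mapsto Uy(a)$ on $\cN$ guaranteed by $U$-definiteness. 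Either way, surjectivity is the single substantive difficulty; parts (i) and the Green's identity are essentially bookkeeping on top of the Lagrange identity and the neutrality of $\ker U$.
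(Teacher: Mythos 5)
A preliminary remark: the paper itself gives no proof of Proposition~\ref{pr3.9} --- it is quoted verbatim from \cite{Mog15} --- so there is no in-paper argument to compare against; I am assessing your attempt against what a complete proof requires. Your part (i) and the Green's identity are correct and complete. Existence via $T^*=\wt\pi_\D\cT_*$ (Assertion~\ref{ass3.2}), uniqueness via $y_1-y_2\in\cN$, $U(y_1-y_2)(a)=0$ and $U$-definiteness, the collapse of the integrand to $\tfrac{d}{dt}(Jy_1,y_2)$, and the neutrality of $\ker U=\ran (JU^*)$ (from $UJU^*=0$, $\dim\ker U=p={\rm rank}\,JU^*$, $J^2=-I_n$) are all exactly right, and (A3) then converts $[y_1,y_2]_b$ into the right-hand side of the abstract Green's identity.

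The genuine gap is surjectivity of $\G$, which you correctly flag as the crux but do not close in either route. In route 1 your sketch --- ``the data can be steered \dots so that $w\in\lI$ near $b$ with $\G_b w=0$'' --- asserts precisely what must be proved. The clean way to execute your own plan is to make $w$ vanish outside a compact interval $[a,c]$ (then $[w,z]_b=0$ for all $z\in\dom\tma$, and surjectivity of $\G_b$ together with the identity in (A3) forces $\G_b w=0$). By \eqref{3.5.2}, with $Y_0(a)=I_n$, the condition $w(c)=0$ amounts to solving $\int_{[a,c]}Y_0^*(u)\D(u)f(u)\,du=-Jv$ for $f\in\lI$ supported in $[a,c]$, where $v=w(a)$. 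The range of $f\mapsto\int_{[a,c]}Y_0^*\D f\,du$ is exactly $N_c^\perp$, where $N_c=\{k\in\bC^n:\D(t)Y_0(t)k=0 \text{ a.e.\ on } [a,c]\}$ is the space of initial data of null solutions on $[a,c]$ --- generally nonzero when $\D$ degenerates, which is the point your sketch is silent on. Hence the attainable initial values form the proper subspace $V_c=(JN_c)^\perp$, and one must still show $UV_c=\bC^p$. Dualizing: if $g\in\bC^p$ and $U^*g\in JN_c$, then $k:=-JU^*g\in N_c$, so $Y_0(\cd)k$ is a null solution on $[a,c]$ with $Uk=-UJU^*g=0$; choosing $c$ via Proposition~\ref{pr3.1.0.1} so that the system is $U$-definite on $[a,c]$ forces $k=0$, hence $g=0$ since $JU^*$ is injective. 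This is the missing computation --- note that $U$-definiteness enters surjectivity a second time, not only uniqueness in (i). Route 2 merely relocates the same hole: your abstract criterion is sound (with $\ker\G=T$, the induced map on $T^*/T$ is injective, and $\dim T^*/T=2(d-p)$ gives bijectivity), and your identification $\ker\G=T$ is correct, but the index identity $n_\pm(T)=d-p$ that you leave as ``accounting'' is equivalent, via $n_\pm(T)=\dim\{y\in\cN_\l:Uy(a)=0\}$ (with $\pi_\D$ injective on this set by $U$-definiteness), to surjectivity of $y\mapsto Uy(a)$ on $\cN_\l$ --- again an attainability statement of the same kind, nowhere proved in your text.
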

\begin{remark}\label{rem3.10}
In the case of the system \eqref{3.1} on a compact interval $\cI=[a,b]$ one has $d=2p$. In this case one can put $\G_b y=y(b),\; y\in \dom\tma$.
\end{remark}
\begin{theorem}\label{th3.12}$\,$\cite{Mog15}
Let the assumptions {\rm (A1)} and  {\rm (A2)} in Proposition \ref{pr3.9} be satisfied. Then the set of pseudospectral functions of the system \eqref{3.1} is not empty and  there exists a Nevanlinna operator function
\begin{gather}\label{3.33}
M(\l)=\begin{pmatrix} m_0(\l) & M_2(\l)\cr M_3(\l) & M_4(\l) \end{pmatrix}:\bC^p\oplus\bC^{d-p}\to \bC^p\oplus\bC^{d-p}, \quad \l\in\CR
\end{gather}
such that $M_4(\cd)\in R_u[\bC^{d-p}]$ and  the equalities
\begin {gather}
m_\tau(\l)=m_0(\l)-M_2(\l)(\tau(\l)+M_4(\l))^{-1}M_3(\l),
\quad\l\in\CR\label{3.36}\\
\s_\tau(s)=\lim\limits_{\d\to+0}\lim\limits_{\varepsilon\to +0} \frac 1 \pi
\int_{-\d}^{s-\d}\im \,m_\tau(u+i\varepsilon)\, du \label{3.37}
\end{gather}
establish a bijective correspondence $\s(\cd)=\s_\tau(\cd)$ between all functions $\tau=\tau(\cd)\in \wt R (\bC^{d-p})$ satisfying the admissibility condition
\begin{gather}\label{3.37.1}
\lim_{y\to \infty}\tfrac 1 {iy} (\tau(i y)+M_4(i y))^{-1}=\lim_{y\to \infty}\tfrac 1 {i y} (\tau^{-1}(i y)+M_4^{-1}(i y))^{-1} =0.
\end{gather}
and  all pseudospectral functions $\s(\cd)$. Moreover, the following statements hold: {\rm (i)} all functions $\tau(\cd)\in \wt R (\bC^{d-p})$ satisfy \eqref{3.37.1} if and only if $\mul T=\mul T^*$; {\rm (ii)} a pseudospectral function $\s_\tau(\cd)$ is orthogonal if and only if $\tau(\l)\equiv \t(=\t^*),\; \l\in\CR$.
\end{theorem}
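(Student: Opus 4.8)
The plan is to derive the parametrization from the boundary triplet machinery of Theorem~\ref{th2.6} together with the generalized resolvent / Fourier transform correspondence for the system. First I would invoke Proposition~\ref{pr3.9} to fix the boundary triplet $\Pi=\{\bC^{d-p},\G_0,\G_1\}$ for $T^*$, and then apply Theorem~\ref{th2.6}(\romannumeral 2): the exit space self-adjoint extensions $\wt T_\tau=\wt T_\tau^*$ of $T$ are in bijection with $\tau\in\wt R(\bC^{d-p})$, with generalized resolvents $R_\tau(\l):=P_\gH(\wt T_\tau-\l)^{-1}\up\gH=(T_{-\tau(\l)}-\l)^{-1}$, $\l\in\CR$. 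The backbone of the argument is that each $\wt T_\tau$ gives rise, through the Fourier transform $V_\s$ of Remark~\ref{rem3.3.2}, to a pseudospectral function, and that the assignments $\tau\mapsto\wt T_\tau$ and $\wt T_\tau\mapsto\s$ are both bijective on the relevant classes; non-emptiness of the set of pseudospectral functions is then immediate since the constant self-adjoint parameters $\tau\equiv\t$ are always admissible.

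Next I would build the characteristic matrix $M(\l)$ of \eqref{3.33}. Its lower--right block $M_4$ is the Weyl function of the triplet $\Pi$ and therefore belongs to $R_u[\bC^{d-p}]$. The remaining blocks come from the observation channel attached to the solution $\f_U(\cd,\l)$: writing the boundary values at $a$ (through $U$) and at $b$ (through $\G_b$) of the $\gN_\l$--solutions and of $\f_U(\cd,\l)$, one reads off $m_0(\l)$, the ``free'' $m$--function on the $\bC^p$ channel, and the coupling blocks $M_2(\l),M_3(\l)$, and verifies $M\in R[\bC^d]$. Krein's resolvent formula for $R_\tau(\l)$, written through $\Pi$ with parameter $-\tau(\l)$, then expresses $R_\tau(\l)$ as an integral against $\f_U$; taking the Schur complement in the $\bC^p$ block closes the loop and produces exactly $m_\tau(\l)=m_0(\l)-M_2(\l)(\tau(\l)+M_4(\l))^{-1}M_3(\l)$, which is \eqref{3.36}. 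The admissibility condition \eqref{3.37.1} is precisely what makes $(\tau+M_4)^{-1}$ well behaved and keeps $m_\tau$ in $R[\bC^p]$.

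With $m_\tau\in R[\bC^p]$ in hand, the Stieltjes--Perron inversion \eqref{3.37} defines a distribution function $\s_\tau$, and the task reduces to showing that $\s_\tau$ is a pseudospectral function. I would compare two representations of the resolvent: the one coming from $\s_\tau$, namely $R_\tau(\l)\,\pi_\D f=\pi_\D\left(\int_\bR (s-\l)^{-1}\f_U(\cd,s)\,d\s_\tau(s)\,\wh f(s)\right)$, against the resolvent identity built from $M(\l)$ and $\tau$, and from their equality extract the Parseval relation $\|\wh f\|_{\lS}=\|f\|_{\lI}$ for $f\in\cL_0$ together with $\|\wh f\|_{\lS}=0$ for $f\in\cL'$, i.e. the conditions of Definition~\ref{def3.3}. \textbf{This is the main obstacle}: one must pass from the analytic resolvent/$m$--function data to the operator Parseval identity and, more delicately, identify the kernel of $V_{\s_\tau}$ as exactly $\mul T$, the minimally possible kernel of Remark~\ref{rem3.7}, rather than a larger subspace. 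Here the admissibility condition \eqref{3.37.1} is what forces the exit part of $\wt T_\tau$ not to contribute extra multivalued directions, so that $\ker V_{\s_\tau}=\mul T$ and $\s_\tau$ is genuinely pseudospectral.

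Finally I would settle the supplementary claims. For (\romannumeral 1), analyzing the asymptotics of $M_4(iy)$ and $M_4^{-1}(iy)$ as $y\to\infty$ shows that the gap between the two sides of \eqref{3.37.1} is controlled by $\mul T^*\ominus\mul T$; hence \eqref{3.37.1} holds for every $\tau$ exactly when $\mul T=\mul T^*$. Bijectivity of $\s(\cd)=\s_\tau(\cd)$ then follows by composing the bijection $\tau\mapsto\wt T_\tau$ of Theorem~\ref{th2.6}(\romannumeral 2) with the injective assignment $\wt T_\tau\mapsto\s_\tau$ onto the pseudospectral functions. For (\romannumeral 2), $\s_\tau$ is orthogonal iff $V_{\s_\tau}$ is onto, iff the exit space of $\wt T_\tau$ is trivial, iff $\wt T_\tau=T_{-\t}$ for a constant self-adjoint $\t$; by Theorem~\ref{th2.6}(\romannumeral 2) this is equivalent to $\tau(\l)\equiv\t=\t^*$, $\l\in\CR$.
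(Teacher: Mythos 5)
The first thing to say is that the paper contains no proof of Theorem \ref{th3.12} to compare against: the theorem is imported verbatim from \cite{Mog15}, and the present paper only quotes fragments of that machinery elsewhere (e.g.\ the unitary equivalence of $\wt T_{0\tau}$ with the multiplication operator $\L_\s$ in $\LS$, used in the proof of Proposition \ref{pr3.15}). Judged on its own terms, your outline reproduces the expected architecture, which is indeed the one of \cite{Mog15}: the boundary triplet of Proposition \ref{pr3.9}, the exit-space parametrization $\tau\mapsto\wt T_\tau$ of Theorem \ref{th2.6}(ii), a Krein-type resolvent formula producing \eqref{3.36}, and Stieltjes--Perron inversion \eqref{3.37}. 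But as a proof it has genuine gaps, the largest of which you flag yourself and then do not close.

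Concretely: (a) the step you call the main obstacle --- deriving the Parseval equality on $\cL_0$, the vanishing of $\wh f$ for $f\in\cL'$, and above all the identification $\ker V_{\s_\tau}=\mul T$ --- is the actual content of the theorem, and your text replaces it with the assertion that admissibility ``forces'' it; the route used in \cite{Mog15} is to construct a unitary operator intertwining the operator part $\wt T_{0\tau}$ with $\L_\s$, from which the partial isometry with kernel exactly $\mul T$ falls out, whereas your plan of ``comparing two representations of the resolvent'' is precisely where the missing analytic labor sits. (b) You misattribute the role of \eqref{3.37.1}: $m_\tau\in R[\bC^p]$ holds for \emph{every} $\tau\in\wt R(\bC^{d-p})$, admissible or not, since \eqref{3.36} is a linear-fractional transform of a Nevanlinna parameter by the Nevanlinna matrix $M$; what \eqref{3.37.1} actually governs is the asymptotics of $m_\tau(iy)$ as $y\to\infty$ (no linear term, no mass escaping to infinity), which is what makes the inversion \eqref{3.37} norm-preserving on $\cL_0$ and prevents $\s_\tau$ from being merely a $q$-pseudospectral function with oversized kernel in the sense of Remark \ref{rem3.7}. (c) The surjectivity half of the bijection --- that \emph{every} pseudospectral function equals $\s_\tau$ for some admissible $\tau$ --- is asserted but never argued; it requires showing that a pseudospectral function generates a generalized resolvent of $T$, hence a parameter $\tau$, and that this $\tau$ necessarily satisfies \eqref{3.37.1}. (d) Statement (i) is dispatched with the unproven claim that the failure of \eqref{3.37.1} is ``controlled by $\mul T^*\ominus\mul T$'', and even non-emptiness rests on the unverified (though true) assertion that constant self-adjoint parameters satisfy \eqref{3.37.1}, which itself needs the $y\to\infty$ asymptotics of the Weyl function $M_4$. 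In short, the skeleton is the right one, but every load-bearing joint is a citation-shaped hole.
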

Note that  the matrix $M(\l)$ in \eqref{3.33} is defined in terms of the boundary values of certain operator solutions of \eqref{3.1} at the endpoints $a$ and $b$ (see \cite[Proposition 4.9]{Mog15}).
\begin{definition}\label{def3.12.1}
A  function $\tau\in \wt R (\bC^{d-p})$ satisfying  \eqref{3.37.1} will be called an admissible boundary parameter.
\end{definition}
Clearly,  Theorem \ref{th3.12} gives a parametrisation of all pseudospectral  functions of the system \eqref{3.1} in terms  of the admissible boundary parameter $\tau$.
\begin{remark}\label{rem3.12.2}
The operator function $m_\tau (\cd)$ in \eqref{3.36} coincides with the $m$-function of the system \eqref{3.1} corresponding to the admissible  boundary parameter $\tau$ (see \cite{Mog15}). Note that $m_\tau(\cd)\in R [\bC^d]$  and \eqref{3.37} is the Perron-Stieltjes formula for $m_\tau$. In the case of the constant-valued admissible boundary parameter $\tau(\l)\equiv \tau(=\tau^*), \; \l\in\CR,$  the function $m_\tau(\cd)$ turns into the $m$-function (Titchmarsh - Weyl function) of the system in the sense of \cite{HinSha81,HinSch93}.
\end{remark}
\begin{proposition}\label{pr3.13}
{\rm (i)} For system \eqref{3.1} the following   equivalences are valid:
\begin{gather}\label{3.38}
\cL'\subset \ker \pi_\D \iff \cL_0=\lI \iff \mul T=\{0\}\iff \cD=\dom \cT_*
\end{gather}
If $\D(t)$ is invertible a.e on $\cI$, then all the relations in \eqref{3.38} hold.

{\rm (ii)} Let for system \eqref{3.1} the assumptions {\rm (A1)} and {\rm (A2)} be satisfied.  Then the set of spectral functions of the system \eqref{3.1} is not empty if and only if at least one (and hence all) of the equivalent conditions in \eqref{3.38} are satisfied. Moreover, in this case the sets of  spectral and pseudospectral functions coincide and hence Theorem \ref{th3.12} holds for spectral functions.
\end{proposition}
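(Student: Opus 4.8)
The plan is to prove the four equivalences in \eqref{3.38} as a cycle of implications, and then deduce part (ii) from part (i) together with Theorem \ref{th3.12}. First I would unwind the definitions: by \eqref{3.23.1} the manifold $\cL'$ consists of those $f(\cd)\in\lI$ for which there exists a solution $y(\cd)$ of the inhomogeneous system \eqref{3.3} that lies in the null manifold (i.e. $\D y=0$ a.e.) and satisfies the boundary conditions $Uy(a)=0$ and $[y,z]_b=0$. Recall from the remark after \eqref{3.24.1} that $\mul T=\pi_\D\cL'$ and $\gH_0=\pi_\D\cL_0$. The equivalence $\cL'\subset\ker\pi_\D\iff\mul T=\{0\}$ is then almost immediate from $\mul T=\pi_\D\cL'$, since $\ker\pi_\D$ is precisely the set annihilated by the quotient map. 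For the middle equivalence $\cL_0=\lI\iff\cL'\subset\ker\pi_\D$, I would use that $\cL_0$ is the $(\cd,\cd)_\D$-orthogonal complement of $\cL'$ in $\lI$ (see \eqref{3.24}); a function $g\in\cL'$ contributes to the orthogonality constraint only through $\D g$, so the constraint is vacuous for all $g$ exactly when every $g\in\cL'$ satisfies $\D g=0$ a.e., i.e. $\cL'\subset\ker\pi_\D$. Here the key point is that the $\D$-inner product sees $f$ only up to $\ker\pi_\D$, so $\cL'\subset\ker\pi_\D$ forces the defining condition of $\cL_0$ to hold trivially.

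For the last equivalence $\mul T=\{0\}\iff\cD=\dom\cT_*$, I would compare the definition \eqref{3.24.1} of $\cD$ (functions $y\in\dom\cT_*$ whose forcing term $f_y$ lies in $\cL_0$) with the full $\dom\cT_*$. Since each $y\in\dom\cT_*$ determines $f_y$ uniquely up to $\ker\pi_\D$, and $\cL_0=\lI$ is equivalent (by the chain already established) to $\mul T=\{0\}$, the condition $f_y\in\cL_0$ becomes automatic precisely when $\cL_0=\lI$. So $\cD=\dom\cT_*\iff\cL_0=\lI$, closing the cycle. Finally, for the invertibility claim, if $\D(t)$ is invertible a.e. then $\ker\pi_\D=\{0\}$, so any solution $y$ of \eqref{3.3} with $\D y=0$ a.e. must have $y=0$ a.e.; hence $\cL'\subset\ker\pi_\D$ holds trivially and all relations in \eqref{3.38} follow.

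For part (ii), I would argue as follows. Comparing Definitions \ref{def3.3} and \ref{def3.3.1}, a pseudospectral function $\s(\cd)$ is a spectral function exactly when the Parseval equality $\|\wh f\|_{\lS}=\|f\|_{\lI}$ holds on all of $\lI$ rather than merely on $\cL_0$; equivalently, by Remark \ref{rem3.3.2}, when the partial isometry $V_\s$ with $\ker V_\s=\mul T$ is in fact an isometry, i.e. when $\mul T=\{0\}$. Under assumptions (A1) and (A2), Theorem \ref{th3.12} guarantees that the set of pseudospectral functions is nonempty. Thus the set of spectral functions is nonempty iff some (hence, since all pseudospectral $\s$ share the same $\ker V_\s=\mul T$, every) pseudospectral function is spectral, which by the above holds iff $\mul T=\{0\}$, one of the equivalent conditions of \eqref{3.38}. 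When this holds the two classes coincide and Theorem \ref{th3.12} applies verbatim to spectral functions. The main obstacle I anticipate is the careful bookkeeping in the middle equivalence, namely verifying that the orthogonality defining $\cL_0$ genuinely collapses to the $\ker\pi_\D$ condition; the rest is a clean unwinding of definitions, and part (ii) is essentially a restatement of the pseudospectral/spectral dichotomy through the single invariant $\mul T$.
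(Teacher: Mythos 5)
Your first two equivalences and all the easy directions are sound and correspond to what the paper dismisses as obvious, but two of your key steps are asserted exactly where the real content sits. For the third equivalence, the direction $\cD=\dom\cT_*\Rightarrow\cL_0=\lI$ does not follow from your remark that ``the condition $f_y\in\cL_0$ becomes automatic precisely when $\cL_0=\lI$'': a priori it could happen that $\cL_0\subsetneq\lI$ while every forcing term actually realized by some $y\in\dom\cT_*$ happens to fall in $\cL_0$ (note that $y$ must itself belong to $\lI$, a genuine constraint in the singular case, so not every $f\in\lI$ need occur as an $f_y$). You need a witness, and one is available from \eqref{3.23.1}: if $\mul T\neq\{0\}$, pick $g\in\cL'$ with $\pi_\D g\neq 0$ and let $y_g$ be the associated solution of \eqref{3.3} with $\D(t) y_g(t)=0$ a.e.\ and $Uy_g(a)=0$; then $||y_g||_\D=0$, so $y_g\in\lI$ and $\{y_g,g\}\in\cT_*$, while $(g,g)_\D=||g||_\D^2>0$ shows $g\notin\cL_0$ (and the same for any other representative of $f_{y_g}$, since $\cL_0$ is saturated under $||\cdot||_\D$-equivalence); hence $y_g\in\dom\cT_*\setminus\cD$. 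The paper sidesteps the witness entirely: it reads the third equivalence off the decomposition $T^*=T_0^*\oplus \wh\mul T$ with $T_0^*\in\C(\gH_0)$, because $\cD=\dom\cT_*$ amounts to $\ran T^*\subset\gH_0$ (every pair of $T^*$ is $\wt\pi_\D$ of a pair of $\cT_*$, and $f\in\cL_0$ iff $\pi_\D f\in\gH_0$), while $\mul T\subset\mul T^*\subset\ran T^*$ and $\mul T\perp\gH_0$.

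In part (ii), your biconditional ``the set of spectral functions is nonempty iff some pseudospectral function is spectral'' silently assumes that every spectral function is pseudospectral, and that is false precisely when $\mul T\neq\{0\}$: condition (ii) of Definition \ref{def3.3} forces $||\wh f||_{\lS}=0$ for $f\in\cL'$, whereas full Parseval for a spectral function gives $||\wh f||_{\lS}=||f||_\D$, and these conflict on $\cL'\setminus\ker\pi_\D$. So a hypothetical spectral function coexisting with $\mul T\neq\{0\}$ would be spectral but not pseudospectral, and your argument would not exclude it; ruling this case out is the nontrivial half of (ii). A repair available inside the paper: a spectral function does satisfy condition (i) of Definition \ref{def3.3} (Parseval on compactly supported $f$ makes $c\mapsto\wh f_c$ Cauchy in $\lS$) and makes $V_\s$ an isometry, hence it is a $q$-pseudospectral function in the sense of Remark \ref{rem3.7}, and the result \cite[Proposition 3.8]{Mog15} quoted there yields $\mul T\subset\ker V_\s=\{0\}$. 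The paper itself simply cites \cite[Theorem 5.12]{Mog15} for all of statement (ii). With these two repairs your outline closes; as written, the two converse directions are genuine gaps.
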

\begin{proof}
(i) The first and second equivalences in \eqref{3.38} are obvious. Next, by \eqref{3.23} one has $T^*=T_0^*\oplus \wh\mul T$, where $T_0^*\in\C (\gH_0)$. This yields the third  equivalence in \eqref{3.38}.

Statement (ii) directly follows from \cite[Theorem 5.12]{Mog15}.
\end{proof}
\section{Uniform convergence of the inverse Fourier transform for Hamiltonian systems}
\begin{lemma}\label{lem4.14}
Suppose that  system \eqref{3.1} is given on a compact interval $I=[a,b]$ and satisfies the assumption {\rm (A2)}  in Proposition \ref{pr3.9}. Let  $\cN_0'$ be a  linear space of all solutions $y(\cd)$ of the system \eqref{3.5.1} satisfying $U y(a)=0$ (clearly, $\cN_0'\subset \lI$ ), let $y(\cd)\in \cN_0'$ and let $\{y_n(\cd)\}_1^\infty$ be a sequence of functions $y_n(\cd)\in \cN_0'$ such that $||y_n(\cd)-y(\cd)||_\D \to 0$. Then
\begin{gather}\label{3.40}
\lim_{n\to \infty} \sup_{t\in\cI} ||y(t)-y_n(t)||=0.
\end{gather}
\end{lemma}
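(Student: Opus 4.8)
Let me understand what Lemma \ref{lem4.14} asserts. We have a Hamiltonian system on a compact interval $[a,b]$, and $\cN_0'$ is the finite-dimensional space of solutions of the homogeneous system $Jy' - By = 0$ (i.e., solutions of \eqref{3.5.1}) satisfying the boundary condition $Uy(a) = 0$. We have a sequence $y_n \to y$ in the $\D$-seminorm, and we want to conclude uniform convergence $\sup_t \|y(t) - y_n(t)\| \to 0$.

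**Key observations.** The crucial structural facts are:

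1. The space $\cN_0'$ is finite-dimensional. The solution space of $Jy' - By = 0$ has dimension $n$ (it's determined by $y(a) \in \bC^n$ via $y(t) = Y_0(t)Y_0^{-1}(a)y(a)$, using the fundamental solution $Y_0$). The condition $Uy(a) = 0$ with $U \in \B(\bC^n, \bC^p)$ having $\ran U = \bC^p$ cuts this down to a subspace of dimension $n - p = p$ (since $n = 2p$).

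2. The map $y \mapsto y(a)$ is a linear isomorphism from $\cN_0'$ onto $\ker U$ (a fixed subspace of $\bC^n$). So each $y \in \cN_0'$ is completely determined by the finite-dimensional data $y(a)$.

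3. **The $U$-definiteness assumption (A2) is what makes the $\D$-seminorm a genuine norm on $\cN_0'$.** Recall that $U$-definite means: for $y \in \cN$ (the null manifold — solutions of the homogeneous equation with $\D y = 0$ a.e.), $Uy(a) = 0$ implies $y = 0$. On $\cN_0'$, suppose $\|y\|_\D = 0$; then $\D(t)y(t) = 0$ a.e., so $y \in \cN$, and since also $Uy(a) = 0$, $U$-definiteness forces $y = 0$. Thus $\|\cdot\|_\D$ is a norm on the finite-dimensional space $\cN_0'$.

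**The proof plan (finite-dimensional equivalence of norms).** Here is my strategy. Since $\cN_0'$ is finite-dimensional, *all* norms on it are equivalent. I will exhibit two norms and use their equivalence:
- The $\D$-seminorm $\|\cdot\|_\D$, which by the argument above is actually a **norm** on $\cN_0'$ (this is where (A2) enters).
- The sup-norm $\|y\|_\infty := \sup_{t \in [a,b]} \|y(t)\|$, which is finite for continuous $y$ on the compact interval $[a,b]$.

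I claim $\|\cdot\|_\infty$ is a norm on $\cN_0'$ too: if $\sup_t \|y(t)\| = 0$ then $y \equiv 0$. By equivalence of norms on the finite-dimensional space $\cN_0'$, there is a constant $C > 0$ with
$$ \|z\|_\infty \le C \|z\|_\D \quad \text{for all } z \in \cN_0'. $$
Applying this to $z = y - y_n \in \cN_0'$ (note $\cN_0'$ is a linear space, so differences stay in it), we get
$$ \sup_{t \in [a,b]} \|y(t) - y_n(t)\| = \|y - y_n\|_\infty \le C \|y - y_n\|_\D \to 0, $$
which is exactly \eqref{3.40}.

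**Making the equivalence rigorous.** The only subtle point is justifying that $\|\cdot\|_\infty$ is genuinely finite and that equivalence of norms applies — but these are standard. To be fully careful, I should verify that $\|\cdot\|_\infty$ is finite on $\cN_0'$: every solution $y$ of $Jy' - By = 0$ satisfies $y(t) = Y_0(t)Y_0^{-1}(a)y(a)$, and since $Y_0(\cdot)$ and $Y_0^{-1}(\cdot)$ are continuous (absolutely continuous) on the compact interval $[a,b]$, they are bounded there, so $\|y\|_\infty \le \big(\sup_t \|Y_0(t)\|\big)\|Y_0^{-1}(a)\| \, \|y(a)\| < \infty$. This also shows $\|y\|_\infty \le C_1 \|y(a)\|$ for a fixed $C_1$, and that $y \mapsto y(a)$ is continuous. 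The cleanest packaging: transport everything to $\ker U \subset \bC^n$ via the isomorphism $y \leftrightarrow y(a)$; both $\|y\|_\D$ and $\|y\|_\infty$ become norms on the finite-dimensional space $\ker U$, and equivalence of norms on $\bC^{\dim\ker U}$ is standard.

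**Main obstacle.** There is no deep analytic obstacle here; the entire content is that **finiteness of dimension converts $\D$-seminorm convergence into uniform convergence**, and the genuinely non-trivial ingredient is recognizing that assumption (A2) ($U$-definiteness) is precisely what upgrades the $\D$-seminorm to a norm on $\cN_0'$ — without it, $\|\cdot\|_\D$ could vanish on a nonzero solution and the conclusion would fail. The one technical point requiring care is confirming that $\cN_0'$ is finite-dimensional and that the sup-norm is finite on it, both handled by the continuity and boundedness of the fundamental solution $Y_0$ on the compact interval. I would write the argument as: (1) identify $\cN_0'$ with $\ker U$ via $y \mapsto y(a)$ and note $\dim \cN_0' < \infty$; (2) use (A2) to show $\|\cdot\|_\D$ is a norm on $\cN_0'$; (3) invoke equivalence of norms to get the constant $C$; (4) apply to $y - y_n$. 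The proof should be quite short.
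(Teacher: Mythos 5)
Your proposal is correct and follows essentially the same route as the paper's proof: both use the $U$-definiteness assumption (A2) to upgrade $\|\cdot\|_\D$ to a genuine norm on $\cN_0'$, identify $\cN_0'$ with $\ker U$ via the isomorphism $y\mapsto y(a)$, and then exploit finite-dimensionality (equivalence of norms, equivalently continuity of linear maps) to pass from $\D$-seminorm convergence to uniform convergence via the bounded fundamental solution $Y_0$ on the compact interval. Your packaging through the explicit inequality $\|z\|_\infty\le C\|z\|_\D$ is just a slightly more detailed rendering of the paper's ``$y_n(a)\to y(a)$ implies \eqref{3.40}'' step.
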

\begin{proof}
Let $y(\cd)\in \cN_0'$ and $(y(\cd),y(\cd))_\D=0$. Then $\D(t)y(t)=0$ and hence $y(\cd)\in\cN$. Since $U y(a)=0$ and the system is $U$-definite, the equality $y=0$ holds. Thus $\cN_0'$ is a finite dimensional Hilbert space with the inner product $(\cd,\cd)_\D$. Clearly, the relation $\cN_0'\ni y(\cd) \to y(0)\in \ker U$ defines a linear isomorphism of $\cN_0'$ onto $\ker U$. Therefore the condition  $||y_n(\cd)-y(\cd)||_\D \to 0$ yields $y_n(0)\to y(0)$, which implies \eqref{3.40}.
\end{proof}
\begin{proposition}\label{pr3.14}
Let system \eqref{3.1} satisfies the assumption {\rm (A2)}. Assume  also that $\{y(\cd),f(\cd)\}\in \cT_*$ and let $\{y_n(\cd)\}_1^\infty$ and $\{f_n (\cd)\}_1^\infty$ be sequences of functions $y_n(\cd), f_n(\cd)\in\lI$ such that $\{y_n(\cd), f_n(\cd)\}\in\cT_*$ and $||y_n(\cd)-y(\cd)||_\D\to 0, \; ||f_n(\cd)-f(\cd)||_\D\to 0$. Then for each compact interval $[a,c]\subset \cI$ one has
\begin{gather}\label{3.41}
\lim_{n\to \infty} \sup_{t\in [a, c]} ||y(t)-y_n(t)||=0.
\end{gather}
\end{proposition}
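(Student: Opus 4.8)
The plan is to reduce everything to the difference $w_n:=y-y_n$ and $g_n:=f-f_n$. First I would observe that $\{w_n,g_n\}\in\cT_*$: indeed $w_n\in\AC\cap\lI$ and $g_n\in\lI$, the inhomogeneous system $Jw_n'-Bw_n=\D f-\D f_n=\D g_n$ holds a.e.\ by linearity, and $Uw_n(a)=Uy(a)-Uy_n(a)=0$; moreover $||w_n||_\D\to 0$ and $||g_n||_\D\to 0$. The target \eqref{3.41} then becomes $\sup_{t\in[a,c]}||w_n(t)||\to 0$.

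Next I would apply the representation \eqref{3.5.2} to the solution $w_n$, writing $w_n(t)=z_n(t)-S_n(t)$, where $z_n$ solves the homogeneous system \eqref{3.5.1} with $z_n(a)=w_n(a)$ and $S_n(t)=Y_0(t)J\int_{[a,t]}Y_0^*(u)\D(u)g_n(u)\,du$. The inhomogeneous term $S_n$ is easy: on any compact $[a,c]$ the continuous factors $Y_0,Y_0^*$ are bounded, and using $\D=\D^{1/2}\D^{1/2}$ together with the Cauchy--Schwarz inequality and local integrability of $\D$,
\[
\int_{[a,c]}||\D(u)g_n(u)||\,du\le\Big(\int_{[a,c]}||\D(u)||\,du\Big)^{1/2}||g_n||_\D\longrightarrow 0 ,
\]
whence $\sup_{t\in[a,c]}||S_n(t)||\to 0$ for every $c$.

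The crux is the homogeneous part $z_n$, and here $U$-definiteness enters. By Proposition \ref{pr3.1.0.1} there is a compact $[a,c_0]\subset\cI$ on which the system is $U$-definite. The restriction of $z_n$ to $[a,c_0]$ lies in $\cN_0'$ (it solves \eqref{3.5.1} and $Uz_n(a)=Uw_n(a)=0$); since $z_n=w_n+S_n$ with $||w_n||_\D\to 0$ and, by the sup-bound just obtained together with local integrability of $\D$, the $\D$-seminorm of $S_n$ over $[a,c_0]$ also tending to $0$, the $\D$-seminorm of $z_n$ over $[a,c_0]$ tends to $0$. Applying Lemma \ref{lem4.14} on $[a,c_0]$ with limit $0$ gives $\sup_{t\in[a,c_0]}||z_n(t)||\to 0$, in particular $z_n(a)\to 0$.

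Finally I would propagate this to an arbitrary $c$: since $Y_0$ is invertible (by \eqref{3.5.1.1}) and $z_n$ solves \eqref{3.5.1}, one has $z_n(t)=Y_0(t)Y_0^{-1}(a)z_n(a)$, so for every $c$
\[
\sup_{t\in[a,c]}||z_n(t)||\le\Big(\sup_{t\in[a,c]}||Y_0(t)||\Big)\,||Y_0^{-1}(a)||\,||z_n(a)||\longrightarrow 0 ,
\]
and combining with the bound on $S_n$ yields $\sup_{t\in[a,c]}||w_n(t)||\to 0$. The main obstacle is precisely the homogeneous part: $\D$-seminorm convergence carries no pointwise information in general, and it is $U$-definiteness on a compact subinterval (Lemma \ref{lem4.14} via Proposition \ref{pr3.1.0.1}) that converts $\D$-seminorm smallness of $z_n$ into smallness of the single initial value $z_n(a)$; continuous dependence on initial data then closes the argument uniformly on every $[a,c]\subset\cI$.
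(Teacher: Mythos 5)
Your proof is correct, and it shares the paper's skeleton: the decomposition \eqref{3.5.2} into a homogeneous part and an integral term, uniform smallness of the integral term deduced from $\D$-seminorm smallness of the data (your estimate via $\D=\D^{1/2}\D^{1/2}$ and $||\D^{1/2}(u)||=||\D(u)||^{1/2}$ is a legitimate variant of the paper's device of pairing $r(t)-r_n(t)$ against a fixed $h\in\bC^n$ — both rest on Cauchy--Schwarz and local integrability of $\D$), and Lemma \ref{lem4.14} together with Proposition \ref{pr3.1.0.1} as the mechanism converting $\D$-seminorm convergence of the homogeneous part into pointwise information. Where you genuinely diverge is the globalization step. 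The paper first proves the compact-interval case as a self-contained statement (its part (i)), and then, for $\cI=[a,b\rangle$, chooses a compact $\cI'=[a,c']$ containing both the $U$-definiteness interval $[a,c_0]$ and the target $[a,c]$, notes that $U$-definiteness persists on the larger interval $\cI'$, and re-runs part (i) there. You instead invoke $U$-definiteness only on $[a,c_0]$, apply Lemma \ref{lem4.14} there (with limit $0$, since you reduced to the differences $w_n$, $g_n$ — legitimate, as $\cT_*$ is a linear relation) solely to extract $z_n(a)\to 0$, and then propagate to an arbitrary $[a,c]$ by $z_n(t)=Y_0(t)Y_0^{-1}(a)z_n(a)$, invertibility of $Y_0$ being guaranteed by \eqref{3.5.1.1}. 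This makes explicit what is implicit in the lemma's proof — everything reduces to smallness of the single initial value $z_n(a)$, since $\cN_0'\ni z\mapsto z(a)$ is an isomorphism onto $\ker U$ — and it avoids needing the (true but tacit in the paper) fact that $U$-definiteness is inherited when the interval is enlarged; the paper's route, in exchange, keeps a reusable compact-interval statement. The auxiliary verifications in your write-up all check out: $Uw_n(a)=0$ from membership of both pairs in $\cT_*$, and $||z_n||_{\cL_\D^2([a,c_0];\bC^n)}\to 0$ via $z_n=w_n+S_n$, the sup bound on $S_n$, and local integrability of $\D$, which is exactly what Lemma \ref{lem4.14} needs.
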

\begin{proof}
(i) First  suppose that system \eqref{3.1} is given  on a compact interval $\cI=[a,b]$. Since $y(\cd)$ and $y_n(\cd)$ are solutions of \eqref{3.3} with $f(\cd)$ and $f_n(\cd)$ respectively, it follows from \eqref{3.5.2} that
\begin{gather}\label{3.42}
y(t)=z(t)+g(t), \qquad y_n(t)=z_n(t)+g_n(t), \quad t\in\cI,
\end{gather}
where $z(\cd)$ and $z_n(\cd)$ are solutions of \eqref{3.5.1} with $z(a)= y(a)$ and $z_n(a)=y_n(a)$ and
\begin{gather}\label{3.43}
g(t)=-Y_0(t)J \int_a^t Y_0^*(u)\D(u) f(u)\,du, \quad g_n(t)=-Y_0(t)J \int_a^t Y_0^*(u)\D(u) f_n(u)\,du.
\end{gather}
Let
\begin{gather*}
r(t)=\int_a^t Y_0^*(u)\D(u) f(u)\,du, \qquad r_n(t)=\int_a^t Y_0^*(u)\D(u) f_n(u)\,du.
\end{gather*}
Then for any $t\in\cI$ and $h\in\bC^n$ one has
\begin{gather*}
|(r(t)-r_n(t),h)|=\left | \int_a^t \left ( Y_0^*(u)\D(u)( f(u) - f_n(u)), h \right) \, du \right |=\\
\left | \int_a^t \left (\D(u)( f(u) - f_n(u)),Y_0(u) h \right) \, du \right |=\left | \left ( f(\cd)-f_n(\cd), Y_0(\cd)h \right )_{\cL_\D^2([a,t])}\right |\leq\\
||f(\cd)-f_n(\cd)||_{\cL_\D^2([a,t])}\cd ||Y_0(\cd)h||_{\cL_\D^2 ([a,t])}\leq ||f(\cd)-f_n(\cd)||_\D\cd ||Y_0(\cd)h||_\D.
 \end{gather*}
This implies that
\begin{gather*}
\lim_{n\to\infty}\sup_{t\in\cI}|(r (t)-r_n(t),h) |=0,\quad h\in\bC^n
\end{gather*}
and hence
\begin{gather}\label{3.44}
\lim_{n\to\infty}\sup_{t\in\cI} ||r (t)-r_n(t)||=0.
\end{gather}
Since by \eqref{3.43} $g(t)-g_n(t)=-Y_0(t)J (r(t)-r_n(t)$ and the operator function $Y_0(t)$ is bounded in $\cI$, it follows from \eqref{3.44} that
\begin{gather}\label{3.45}
\lim_{n\to\infty}\sup_{t\in\cI} ||g (t)- g_n(t)||=0.
\end{gather}
Therefore $||g(\cd)-g_n(\cd)||_\D \to 0$ and by \eqref{3.42} $||z(\cd)-z_n(\cd)||_\D\to 0$. Since $U z(a)=U y(a)$ and $U z_n(a)=U y_n(a)$, it follows from \eqref{3.22.2} that $Uz(a)=U z_n(a)=0$. Therefore $z(\cd), z_n (\cd)\in \cN_0'$ (for $\cN_0'$ see Lemma \ref{lem4.14}) and by Lemma \ref{lem4.14}
\begin{gather}\label{3.46}
\lim_{n\to\infty}\sup_{t\in\cI} ||z (t)- z_n(t)||=0.
\end{gather}
Now combining \eqref{3.42} with \eqref{3.45} and \eqref{3.46} we arrive at the equality
\begin{gather*}
\lim_{n\to\infty}\sup_{t\in\cI} ||y (t)- y_n(t)||=0.
\end{gather*}

(ii) Now consider  system \eqref{3.1} on an  interval $\cI=[a,b), \; b\leq \infty$. According to Proposition \ref{pr3.1.0.1} there is a segment $I_0=[a,c_0]\subset \cI$ such that the system is $U$-definite on $\cI_0$. Let $\cI_1=[a,c]$ be a segment in $\cI$ and let $\cI'=[a,c']\subset \cI$ be a segment such that $\cI_0\subset \cI'$ and $\cI_1\subset \cI'$. Then the system is $U$-definite on $\cI'$. Let $\cT_*'$ be the linear relation \eqref{3.22.2} corresponding to the restriction of the system \eqref{3.1} on $\cI'$ and let $\ov y(\cd),\; \ov y_n(\cd),\; \ov f(\cd),\; \ov f_n(\cd)$ be restrictions of the functions  $ y(\cd),\;  y_n(\cd),\;  f(\cd),\;  f_n(\cd)$ on $\cI'$ respectively. Clearly, $\{\ov y(\cd), \ov f(\cd)\}\in \cT_*', \; \{\ov y_n(\cd), \ov f_n(\cd)\}\in \cT_*' $ and $||\ov y_n(\cd)-\ov y(\cd)||_{\cL_\D^2(\cI',\bC^n)}\to 0$,  $||\ov f_n(\cd)-\ov f(\cd)||_{\cL_\D^2(\cI',\bC^n)}\to 0$. Therefore by statement (i)
\begin{gather}\label{3.47}
\lim_{n\to\infty}\sup_{t\in\cI'} ||y (t)- y_n(t)||=0.
\end{gather}
and the inclusion $\cI_1 \subset \cI'$ implies that \eqref{3.47} holds with $\cI_1=[a,c]$ instead of $\cI'$.
\end{proof}
Let $T$ be a symmetric relation \eqref{3.22.1} and let $\Pi=\{\bC^{d-p}, \G_0,\G_1\}$ be a boundary triplet \eqref{3.32} for $T^*$. Moreover, let $\tau\in \wt R(\bC^{d-p})$ be an admissible boundary parameter and let $\wt T_\tau=\wt T_\tau^*$ be the corresponding exit space extension of $T$ (see Theorem \ref{th2.6}, (ii)). Assume that $\wt T_\tau$ is a linear relation in a Hilbert space $\wt \gH\supset \gH$. Then according to \cite[Proposition 5.3]{Mog15} $\mul \wt T_\tau=\mul T$ and the equalities \eqref{3.23} for $\wt T_\tau$ take the form
\begin{gather}\label{3.48}
\wt\gH=\wt\gH_0\oplus\mul T, \qquad \wt T_\tau={\rm gr} \wt T_{0\tau}\oplus \wh\mul T,
\end{gather}
where $\wt\gH_0=\wt\gH \ominus \mul T$ and $\wt T_{0\tau}$ is a self-adjoint operator in $\wt\gH_0$.

Combining \eqref{3.48} with \eqref{3.23} one obtains that $\gH_0\subset \wt \gH_0$ and $\wt T_{0\tau}$ is an exit space extension of $T_0$.
\begin{proposition}\label{pr3.15}
Let for system \eqref{3.1} the assumptions {\rm (A1)} and  {\rm (A2)} in Proposition \ref{pr3.9} be satisfied. Moreover, let $\tau \in \wt R(\bC^{d-p})$ be an admissible boundary parameter, let $\s(\cd)=\s_\tau(\cd)$ be a pseudospectral function of the system \eqref{3.1}, let $\wt T_\tau=\wt T_\tau^*$ be an exit space extension of $T$ in the Hilbert space $\wt \gH\supset \gH$, let $\wt T_{0\tau}$ be the operator part of $\wt T_\tau$ (see  decompositions \eqref{3.48}) and let $E(\cd)$ be the orthogonal spectral measure of $\wt T_{0\tau}$. Next assume that $\{B_n\}_1^\infty$ is a sequence of bounded Borel sets $B_n\subset \bR$ such that $B_n\subset B_{n+1}$ and let $B:=\bigcup_{n\in\bN} B_n$. Let $\{y(\cd),f(\cd)\}\in\cT_*$ be a pair of functions such that $\wt y:=\pi_\D y(\cd)\in \dom \wt T_{0\tau}\cap \gH_0$ and $\wt f:=\pi_\D f(\cd) =P_{\gH_0}\wt T_{0\tau}\wt y$ (for $\gH_0$ see \eqref{3.23}) and let $\wt y_B:=P_{\gH_0} E(B)\wt y, \; \wt f_B:=P_{\gH_0} E(B) \wt T_{0\tau}\wt y$. Then:

{\rm(i)} $\{\wt y_B, \wt f_B\}\in T^*$ and hence there exists a pair of functions $\{ y_B(\cd),  f_B(\cd)\}\in \cT_*$  such that $\pi_\D y_B(\cd)= \wt y_B$ and $\pi_\D f_B(\cd)= \wt f_B$;

{\rm(ii)} if $\wh y(\cd)$ is the Fourier transform of $y(\cd)$, then for each compact interval $[a,c]\subset \cI$ one has
\begin{gather}\label{3.49}
\lim_{n\to\infty}\sup_{t\in [a,c]}\left| \left|y_B(t)-\int_\bR \f_u(t,s)d\s (s)\chi_{B_n}(s)\wh y(s)\right |\right |=0.
\end{gather}

{\rm(iii)} if in addition $\mu_\s(\bR\setminus B)=0$, then for each $[a,c]\subset\cI$ one has
\begin{gather}\label{3.49.0}
\lim_{n\to\infty}\sup_{t\in [a,c]}\left| \left|y(t)-\int_\bR \f_u(t,s)d\s (s)\chi_{B_n}(s)\wh y(s)\right |\right |=0.
\end{gather}
\end{proposition}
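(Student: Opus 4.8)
The plan is to reduce statements (ii) and (iii) to the uniform–convergence criterion of Proposition~\ref{pr3.14}, using (i) to produce the limiting pair $\{y_B(\cd),f_B(\cd)\}\in\cT_*$ and using the exit–space spectral calculus to identify the truncated inverse transforms with the concrete images of spectrally truncated data. I would first prove (i). Since $E(\cd)$ is the orthogonal spectral measure of $\wt T_{0\tau}$ and $\wt y\in\dom\wt T_{0\tau}$, for every Borel set $B$ one has $E(B)\wt y\in\dom\wt T_{0\tau}$ and $\wt T_{0\tau}E(B)\wt y=E(B)\wt T_{0\tau}\wt y$, so $\{E(B)\wt y,\wt T_{0\tau}E(B)\wt y\}\in{\rm gr}\,\wt T_{0\tau}\subset\wt T_\tau$ by \eqref{3.48}. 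Applying $\Phi(\wt T_\tau)\subset T^*$ (see \eqref{2.12.1}) to this pair gives $\{P_\gH E(B)\wt y,\,P_\gH\wt T_{0\tau}E(B)\wt y\}\in T^*$. Both entries lie in $\wt\gH_0=\wt\gH\ominus\mul T$ and are therefore orthogonal to $\mul T$, so $P_\gH$ may be replaced by $P_{\gH_0}$ on them; hence $\{\wt y_B,\wt f_B\}\in T^*$, and Proposition~\ref{pr3.9}(i) yields the required $\{y_B(\cd),f_B(\cd)\}\in\cT_*$.

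For (ii) set $\ov y_{B_n}(t)=\int_\bR\f_U(t,s)\,d\s(s)\chi_{B_n}(s)\wh y(s)$ and $\ov f_{B_n}(t)=\int_\bR\f_U(t,s)\,d\s(s)\,s\chi_{B_n}(s)\wh y(s)$; both belong to $\lI$ by Proposition~\ref{pr3.4}(i), since $B_n$ is bounded and $\wh y,\,s\chi_{B_n}\wh y\in\lS$. Because $\f_U(\cd,s)$ solves \eqref{3.1} with $\l=s$, differentiation under the integral over the bounded set $B_n$ gives $J\ov y_{B_n}'-B\ov y_{B_n}=\D\ov f_{B_n}$ a.e.; moreover $U\ov y_{B_n}(a)=0$ because $\f_U(a,s)=-JU^*$ and $UJU^*=0$ by \eqref{3.20}. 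Hence $\{\ov y_{B_n},\ov f_{B_n}\}\in\cT_*$ by \eqref{3.22.2}. The key step is to identify these with the abstract truncations: by \eqref{3.28}, $\pi_\D\ov y_{B_n}=V_\s^*\pi_\s(\chi_{B_n}\wh y)$ and $\pi_\D\ov f_{B_n}=V_\s^*\pi_\s(s\chi_{B_n}\wh y)$. Using the exit–space generalized Fourier transform $\wt V_\s:\wt\gH_0\to\LS$, which is unitary, diagonalizes $\wt T_{0\tau}$ as multiplication by $s$ and carries $E(B_n)$ to multiplication by $\chi_{B_n}$, together with $V_\s=\wt V_\s P_{\gH_0}$ (so $V_\s^*=P_{\gH_0}\wt V_\s^*$) and $\pi_\s\wh y=V_\s\wt y$ (see \eqref{3.26.2}), I obtain $\pi_\D\ov y_{B_n}=P_{\gH_0}E(B_n)\wt y=\wt y_{B_n}$ and $\pi_\D\ov f_{B_n}=P_{\gH_0}E(B_n)\wt T_{0\tau}\wt y=\wt f_{B_n}$.

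Now $B_n\uparrow B$ and strong continuity of $E(\cd)$ give $E(B_n)\wt y\to E(B)\wt y$ and $E(B_n)\wt T_{0\tau}\wt y\to E(B)\wt T_{0\tau}\wt y$, so $||\ov y_{B_n}-y_B||_\D=||\wt y_{B_n}-\wt y_B||\to 0$ and $||\ov f_{B_n}-f_B||_\D=||\wt f_{B_n}-\wt f_B||\to 0$. Applying Proposition~\ref{pr3.14} to the limit pair $\{y_B,f_B\}$ and the approximating pairs $\{\ov y_{B_n},\ov f_{B_n}\}$ yields $\sup_{t\in[a,c]}||y_B(t)-\ov y_{B_n}(t)||\to 0$, which is \eqref{3.49}. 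I expect the intertwining identity of the previous paragraph to be the main obstacle, since passing from the $\f_U(t,s)\,d\s(s)$–integrals to the spectral projections $P_{\gH_0}E(B_n)$ relies on the unitarity of $\wt V_\s$ on the exit space and on the compatibility relations $V_\s=\wt V_\s P_{\gH_0}$ and $V_\s^*=P_{\gH_0}\wt V_\s^*$.

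Finally, for (iii) the extra hypothesis $\mu_\s(\bR\setminus B)=0$ means $\chi_B=1$ $\mu_\s$–a.e., so multiplication by $\chi_B$ is the identity on $\LS$; transporting through $\wt V_\s$ gives $E(B)=I$ on $\wt\gH_0$. Hence $\wt y_B=P_{\gH_0}E(B)\wt y=\wt y$ and $\wt f_B=P_{\gH_0}E(B)\wt T_{0\tau}\wt y=\wt f$. Since $\{y,f\}$ and $\{y_B,f_B\}$ both lie in $\cT_*$ and project under $\pi_\D$ to the same pair $\{\wt y,\wt f\}\in T^*$, the uniqueness in Proposition~\ref{pr3.9}(i) forces $y_B(\cd)=y(\cd)$; substituting this into \eqref{3.49} gives \eqref{3.49.0}.
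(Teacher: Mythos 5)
Your proposal is correct and follows essentially the same route as the paper's proof: part (i) via the compression relation \eqref{2.12.1} (you compress $\wt T_\tau$ and then replace $P_\gH$ by $P_{\gH_0}$, while the paper compresses the operator part $\wt T_{0\tau}$ directly into $T_0^*\subset T^*$ --- the same argument), part (ii) via the unitary $\wt V$ of \cite[Proposition 5.6]{Mog15} intertwining $\wt T_{0\tau}$ with $\L_\s$ to identify the truncated integrals $\int_\bR \f_U(t,s)\,d\s(s)\chi_{B_n}(s)\wh y(s)$ with representatives of $P_{\gH_0}E(B_n)\wt y$, followed by Proposition \ref{pr3.14}, and part (iii) via $E(B)=I_{\wt\gH_0}$ and the uniqueness in Proposition \ref{pr3.9}, (i). The only cosmetic difference is that you verify $\{\ov y_{B_n},\ov f_{B_n}\}\in\cT_*$ directly by differentiating under the integral over the bounded set $B_n$, where the paper cites the proof of \cite[Proposition 5.5]{Mog15}.
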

\begin{proof}
(i) Since $\{E(B) \wt y, E(B) \wt T_{0\tau} \wt y\}\in {\rm gr} \wt T_{0\tau}$, it follows from \eqref{2.12.1} that $\{\wt y_B, \wt f_B\}\in \Phi (\wt T_{0\tau})$ and hence $\{\wt y_B, \wt f_B\}\in T_0^*$. Since obviously $T_0^*\subset T^*$, this implies that $\{\wt y_B, \wt f_B\}\in T^*$.

(ii) Let $\cK:=V_{\s} \gH_0(\subset \LS)$ and let $\wt V_0 \in \B (\gH_0,\cK)$ be a unitary operator given by $\wt V_0 \wt f=V_{\s} \wt f, \; \wt f\in\gH_0 $. Denote by $\L_\s$ the
multiplication operator  in $\LS$ defined by
\begin{gather*}
\dom \Lambda_\s=\{\wt g\in \LS:s g(s)\in \LS \;\;\text{for some (and hence for all)}\;\; g(\cd)\in \wt g\}\nonumber\\
\Lambda_\s \wt g=\pi_{\s}(sg(s)), \;\; \wt g\in\dom\Lambda_\s,\quad g(\cd)\in\wt g.
\end{gather*}
As is known, $\Lambda_\s^*=\Lambda_\s$ and the orthogonal spectral measure $E_\s(\cd)$ of $\Lambda_\s$ is
\begin {equation}\label{3.49.1}
E_\s(B)\wt g= \pi_\s (\chi_B(\cd)g(\cd)), \quad B\in\cA,\;\; \wt g \in  L^2(\s;\bC^n),\;\; g(\cd)\in \wt g.
\end{equation}

According to \cite[Proposition 5.6]{Mog15} there  exists a unitary operator $\wt V \in \B  (\wt\gH_0,\LS)$ such that $\wt V\up \gH_0=V_{\s}\up\gH_0 $ and the operators  $\wt T_{0\tau}$ and $\L_\s$ are unitarily equivalent by means of $\wt V$. This implies that
\begin{gather}
P_{\gH_0} E(B_n)\up \gH_0=\wt V_0^* P_{\cK} E_\s(B_n)\wt V_0\label{3.49.2}\\
P_{\gH_0} E(B_n) \wt T_{0\tau}\up (\dom \wt T_{0\tau}\cap\gH_0) =\wt V_0^* P_{\cK} E_\s(B_n)\L_\s \wt V_0 \up(\dom \wt T_{0\tau}\cap\gH_0)\label{3.49.3}
\end{gather}
Since  $\wt V_0^* P_\cK\wt g=V_\s^*\wt g, \; \wt g\in\LS$, the equalities  \eqref{3.49.2}  and \eqref{3.49.3} can be written as
\begin{gather}
P_{\gH_0} E(B_n)\up \gH_0=V_\s^*  E_\s(B_n)V_\s \up \gH_0\label{3.49.4}\\
 P_{\gH_0} E(B_n)\wt T_{0\tau}\up (\dom \wt T_{0\tau}\cap\gH_0)=V_\s^* E_\s(B_n)\L_\s V_\s \up (\dom \wt T_{0\tau}\cap\gH_0).\label{3.49.5}
\end{gather}
Let $\wt y_n:=P_{\gH_0} E(B_n)\wt y$ and $\wt f_n:=P_{\gH_0} E(B_n)\wt T_{0\tau}\wt y$. Then by \eqref{3.49.4} and \eqref{3.49.5} one has
\begin{gather}\label{3.50}
\wt y_n=V_\s^*  E_\s(B_n)V_\s \wt y, \qquad \wt f_n=V_\s^*  E_\s(B_n)\L_\s V_\s \wt y.
\end{gather}
Combining \eqref{3.50} with \eqref{3.26.2}, \eqref{3.49.1} and \eqref{3.28} one gets $\wt y_n=\pi_\D y_n(\cd)$ and $\wt f_n=\pi_\D f_n(\cd)$, where $y_n(\cd)$ and $f_n(\cd)$ are functions from $\lI$ given by
\begin{gather*}
y_n(t)=\int_\bR \f_U(t,s) d\s(s)\chi_{B_n}(s)\wh y(s), \qquad f_n(t)=\int_\bR s \f_U(t,s) d\s(s) \chi_{B_n}(s)\wh y(s).
\end{gather*}
It was shown in the proof of Proposition 5.5 in \cite{Mog15} that $\{y_n(\cd),f_n(\cd)\}\in\cT_*$. Moreover, since $||\wt y_n - \wt y_B||_\gH \to 0$ and $||\wt f_n - \wt f_B||_\gH \to 0$, it follows that $|| y_n(\cd) - y_B(\cd)||_\D \to 0$ and $|| f_n(\cd) -  f_B (\cd)||_\D \to 0$. Therefore by Proposition \ref{pr3.14} for each segment $[a,c]\subset\cI$ the equality \eqref{3.49} is valid.

(iii) Assume that $\mu_\s(\bR\setminus B)=0$. Since the operators $E_\s(B)$ and $E(B)$ are unitarily equivalent, this implies that $E(\bR\setminus B)=0$ and hence $E(B)=I_{\wt\gH_0}$. Therefore $\wt y_B=\wt y, \; \wt f_B=\wt f$ and, consequently, $\pi_\D y(\cd)=\pi_\D  y_B(\cd), \;\pi_\D f(\cd)=\pi_\D  f_B(\cd)$. Thus by Proposition \ref{pr3.9}, (i) $y(\cd)=y_B(\cd)$ and \eqref{3.49} yields \eqref{3.49.0}.
\end{proof}
The main results of this section are given in the following two theorems.
\begin{theorem}\label{th3.16}
Let for system \eqref{3.1} the assumptions {\rm (A1) -- (A3)} in Proposition \ref{pr3.9} be satisfied   and let $\cD\subset \dom\cT_*$ be the linear manifold \eqref{3.24.1}. Assume also that $\tau \in \wt R (\bC^{d-p})$ is an admissible boundary parameter, $\s(\cd)=\s_\tau(\cd)$ is a pseudospectral function of the system and $\eta_\tau\in \C (\bC^{d-p})$ is the linear relation defined in Theorem \ref{th2.9}. Then for each function $y(\cd)\in\cD$ satisfying the boundary condition $\{\G_{0b}y(\cd), -\G_{1b}y(\cd)\} \in \eta_\tau$ the following statements hold:

{\rm (i)} If $\wh y(\cd)$ is the Fourier transform of $y(\cd)$, then $\int\limits_\bR ||\f_U(t,s)\Psi (s) \wh y(s)||\, d\nu<\infty,\; t\in\cI,$ and the inverse transform for $y(\cd)$ is
\begin{gather}\label{3.52}
y(t)=\int_{\bR} \f_U(t,s) d\s(s) \wh y(s)=\int_{\bR} \f_U(t,s) \Psi (s)\wh y(s) d\nu, \quad t\in\cI.
\end{gather}
Here $\Psi(\cd)$ and $\nu $ are the operator function and Borel measure for $\s(\cd)$ defined in Theorem \ref{th2.10} (the integral in \eqref{3.52} exists as the Lebesgue integral).

{\rm (ii)}  The integral in \eqref{3.52} converges uniformly on each compact interval $[a,c]\subset \cI$, that is for each  sequence $\{B_n\}_1^\infty$ of bounded Borel sets $B_n\subset \bR$ satisfying  $B_n\subset B_{n+1}$ and $\mu_\s\left(\bR\setminus \bigcup_{n\in\bN} B_n  \right )=0$   the equality \eqref{3.49.0} holds. This implies that
\begin{gather}\label{3.53}
\lim_{{\a\to - \infty}\atop{\b\to +\infty}} \sup_{t\in [a,c]}\left |\left | y(t)-\int_\bR \f_U(t,s) d\s(s) \chi_{[\a,\b]}(s)  \wh y(s)\right|\right |=0.
\end{gather}
\end{theorem}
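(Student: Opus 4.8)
The plan is to reduce statement (ii) entirely to Proposition \ref{pr3.15} by showing that the boundary condition identifies the pair $\{\wt y,\wt f\}:=\wt\pi_\D\{y,f_y\}$ (where $\wt y=\pi_\D y$, $\wt f=\pi_\D f_y$) as an element of the compression $C(\wt T_\tau)$, and to prove the absolute convergence in (i) by a weighted Cauchy--Schwarz estimate. First I would unwind the boundary condition. Since $y\in\cD$ we have $\{y,f_y\}\in\cT_*$ with $f_y\in\cL_0$, so $\{\wt y,\wt f\}\in T^*$ and $\wt f\in\gH_0=\pi_\D\cL_0$. By the definition \eqref{3.32} of the boundary triplet $\Pi=\{\bC^{d-p},\G_0,\G_1\}$ one has $\G_0\{\wt y,\wt f\}=\G_{0b}y$ and $\G_1\{\wt y,\wt f\}=-\G_{1b}y$, so the condition $\{\G_{0b}y,-\G_{1b}y\}\in\eta_\tau$ reads $\{\G_0\{\wt y,\wt f\},\G_1\{\wt y,\wt f\}\}\in\eta_\tau$, that is $\{\wt y,\wt f\}\in T_{\eta_\tau}$ in the sense of \eqref{2.11}. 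Applying Theorem \ref{th2.9} with $A=T$ gives $T_{\eta_\tau}=C(\wt T_\tau)=P_\gH\wt T_\tau\up\gH$, whence $\{\wt y,\wt f\}\in C(\wt T_\tau)$.

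Next I translate this into the standing hypotheses of Proposition \ref{pr3.15}. By Definition \ref{def2.8} there is $\wt f'\in\wt\gH$ with $\{\wt y,\wt f'\}\in\wt T_\tau$, $\wt y\in\gH$ and $\wt f=P_\gH\wt f'$. The decomposition \eqref{3.48} forces $\wt y\in\dom\wt T_{0\tau}\subset\wt\gH_0$ and $\wt f'=\wt T_{0\tau}\wt y+k$ with $k\in\mul T$. Since $\gH\cap\wt\gH_0=\gH_0$ (by \eqref{3.23} and \eqref{3.48}), the inclusions $\wt y\in\gH$ and $\wt y\in\wt\gH_0$ give $\wt y\in\dom\wt T_{0\tau}\cap\gH_0$; and as $\wt f\in\gH_0$ while $k\perp\gH_0$, applying $P_{\gH_0}$ to $\wt f=P_\gH\wt f'$ yields $\wt f=P_{\gH_0}\wt f'=P_{\gH_0}\wt T_{0\tau}\wt y$. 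Thus $\wt y,\wt f$ satisfy exactly the assumptions of Proposition \ref{pr3.15}, and choosing any $\{B_n\}$ with $\mu_\s(\bR\setminus\bigcup_n B_n)=0$ (so $\mu_\s(\bR\setminus B)=0$ for $B=\bigcup_n B_n$), statement (iii) of that proposition gives \eqref{3.49.0}, which is the first assertion of (ii).

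For (i) I would first observe that $\wt y\in\dom\wt T_{0\tau}\cap\gH_0$ implies, through the unitary $\wt V$ of Proposition \ref{pr3.15} intertwining $\wt T_{0\tau}$ with $\L_\s$ and satisfying $\wt V\up\gH_0=V_\s\up\gH_0$, that $\pi_\s\wh y=V_\s\wt y\in\dom\L_\s$, i.e. $s\,\wh y(s)\in\LS$; hence $\int_\bR(1+s^2)(d\s(s)\wh y(s),\wh y(s))<\infty$. Writing $\Psi\wh y=\Psi^{1/2}\cd\Psi^{1/2}\wh y$, bounding $\|\f_U(t,s)\Psi(s)^{1/2}\|^2\le{\rm tr}(\f_U(t,s)\Psi(s)\f_U^*(t,s))$ and applying Cauchy--Schwarz with weight $(1+s^2)$ gives, for fixed $t$,
\[
\int_\bR\|\f_U(t,s)\Psi(s)\wh y(s)\|\,d\nu\le\Big(\int_\bR\tfrac{{\rm tr}\,(\f_U(t,s)\,d\s(s)\,\f_U^*(t,s))}{1+s^2}\Big)^{1/2}\Big(\int_\bR(1+s^2)(d\s(s)\wh y(s),\wh y(s))\Big)^{1/2}.
\]
Once the first factor is finite the integral in \eqref{3.52} converges absolutely (in the sense of \eqref{2.16}); dominated convergence then identifies it with $\lim_n\int_\bR\f_U(t,s)\,d\s(s)\chi_{B_n}(s)\wh y(s)$, which by the already proved \eqref{3.49.0} equals $y(t)$, giving \eqref{3.52}. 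The uniform limit \eqref{3.53} follows by applying the same estimate on the tails $\bR\setminus[\a,\b]$, where the second factor tends to $0$, provided the first factor is bounded uniformly for $t\in[a,c]$.

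Hence the main obstacle is the estimate $\sup_{t\in[a,c]}\int_\bR(1+s^2)^{-1}\,{\rm tr}\,(\f_U(t,s)\,d\s(s)\,\f_U^*(t,s))<\infty$, i.e. that for each $t$ the solution $\f_U(t,\cd)$ is square-integrable against the finite measure $(1+s^2)^{-1}d\s$, with a bound locally uniform in $t$. I expect to obtain this from the reproducing-kernel (Green-function) structure of $\s=\s_\tau$: the quantity is a diagonal value at $\l=i$ of the resolvent kernel of $\wt T_{0\tau}$ built from $\f_U$, which is finite, and its joint continuity in $t$ gives the uniform bound on $[a,c]$; alternatively it can be extracted from the estimates of \cite{Mog15}. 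Everything else is bookkeeping with the boundary triplet \eqref{3.32}, the decomposition \eqref{3.48}, and a direct appeal to Proposition \ref{pr3.15}.
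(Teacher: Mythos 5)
Your treatment of statement (ii) and of the reduction itself coincides with the paper's proof: the identification $\{\G_0\{\wt y,\wt f\},\G_1\{\wt y,\wt f\}\}\in\eta_\tau$ via the boundary triplet \eqref{3.32}, the appeal to Theorem \ref{th2.9} to get $\{\wt y,\wt f\}\in C(\wt T_\tau)$, the use of $f_y(\cd)\in\cL_0$ to place $\wt f$ in $\gH_0$ and conclude $\wt y\in\dom\wt T_{0\tau}\cap\gH_0$, $\wt f=P_{\gH_0}\wt T_{0\tau}\wt y$, and the direct appeal to Proposition \ref{pr3.15}, (iii) for \eqref{3.49.0} are exactly the paper's steps (the paper packages your hand computation from \eqref{3.48} as the decomposition \eqref{3.54}, $C(\wt T_\tau)={\rm gr}\,C(\wt T_{0\tau})\oplus\mul T$, but the content is the same).

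In statement (i), however, your route has a genuine gap. The weighted Cauchy--Schwarz argument stands or falls with the estimate $\sup_{t\in[a,c]}\int_\bR(1+s^2)^{-1}\,{\rm tr}\,(\f_U(t,s)\,d\s(s)\,\f_U^*(t,s))<\infty$, which you explicitly defer ("I expect to obtain this\dots"). This is not a routine fact in the present setting: for fixed $t$ the entries of $\f_U(t,s)$ grow superpolynomially as $s\to-\infty$ (in the paper's own example of Section 5 they grow like $\ch (\sqrt{-s}\,t)$), so finiteness hinges on a $\tau$-dependent decay of $\s_\tau$ at $-\infty$ that is established neither in the paper nor in \cite{Mog15} in the form you need; and with a semi-definite weight you cannot extract it from the Parseval equality by testing against approximate point masses at $t$, since $\D$ may vanish there. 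The observation you miss is that Proposition \ref{pr3.15}, (ii) already yields absolute integrability with no kernel estimate at all: \eqref{3.49} holds for an \emph{arbitrary} increasing sequence of bounded Borel sets $B_n$ (the condition $\mu_\s(\bR\setminus\bigcup_n B_n)=0$ is needed only in part (iii)), so for each fixed $t$ the partial integrals $\int_\bR\f_U(t,s)\Psi(s)\chi_{B_n}(s)\wh y(s)\,d\nu$ converge to $y_B(t)$ and are therefore bounded; if one had $\int_\bR\|\f_U(t,s)\Psi(s)\wh y(s)\|\,d\nu=\infty$, one could choose an increasing sequence of bounded Borel sets concentrated where a fixed real coordinate of the integrand keeps one sign, making these partial integrals unbounded --- a contradiction. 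This is precisely how the paper proves (i), and it also yields \eqref{3.53} without your uniform bound. So: your proposal is correct and essentially the paper's argument for (ii), but part (i) as written is incomplete, and the conjectured estimate should be replaced by the boundedness argument just described.
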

\begin{proof}
Assume that $y(\cd)\in \cD$ and $\{\G_{0b}y(\cd), -\G_{1b}y(\cd)\} \in \eta_\tau$. Then according to \eqref{3.22.3} $\{y(\cd),f_y(\cd)\}\in \cT_*$ with some $f_y(\cd)$ and hence the pair $\{\wt y, \wt f\}=\wt\pi_\D \{y(\cd),f_y(\cd)\}$  belongs to $T^*$. Let $\Pi=\{\bC^{d-p}, \G_0, \G_1 \}$ be the boundary triplet \eqref{3.32} for $T^*$. Then $\{\G_0\{\wt y,\wt f\}, \G_1\{\wt y,\wt f\}\}\in \eta_\tau$ and by Theorem  \ref{th2.9} $\{\wt y,\wt f\}\in C(\wt T_\tau)$, where $C(\wt T_\tau)$ is the compression of the exit space extension $\wt T_\tau =\wt T_\tau^* $ of $T$ with $\mul \wt T_\tau = \mul T$. One can easily verify that
\begin{gather}\label{3.54}
 C(\wt T_\tau)={\rm gr }\, C(\wt T_{0\tau})\oplus\mul T
\end{gather}
where $C(\wt T_{0\tau}) = P_{\gH_0}\wt T_{0\tau}\up \gH_0 \cap \dom T_{0\tau} $ is the compression of the operator part  $\wt T_{0\tau}$ of $\wt T_\tau$ (see \eqref{3.48}). Since $f_y(\cd)\in\cL_0$, it follows that $\wt f\in\gH_0$ and by \eqref{3.54} $\{\wt y, \wt f\}\in  {\rm gr }\, C(\wt T_{0\tau})$, that is $\wt y\in \dom \wt T_{0\tau}\cap \gH_0$ and $\wt f=P_{\gH_0} \wt T_{0\tau} \wt y$. Therefore  by Proposition \ref{pr3.15} for any $t\in\cI$ and for any sequence $\{B_n\}_1^\infty$ of bounded Borel sets $B_n\subset \bR$ satisfying $B_n\subset B_{n+1}$ there exists $C>0$ such that $\left|\left |\int\limits_{\bR} \f_u(t,s)\Psi(s)\chi_{B_n}(s)\wh y(s)  d\nu(s)\right |\right |\leq C$. Hence $\int\limits_{\bR} ||\f_u(t,s)\Psi(s) \wh y(s) || d\nu(s)< \infty$ and Proposition \ref{pr3.15}, (iii) yields \eqref{3.52} and statement (ii).
\end{proof}
\begin{theorem}\label{th3.16.1}
Let the assumptions be the same as in Theorem \ref{th3.16}. Moreover, let at least one (and hence all) of the equivalent conditions in  \eqref{3.38} be satisfied (in particular this assumption is fulfilled if $\D(t)$ is invertible a.e. on $\cI$). Then  $\s(\cd)=\s_\tau (\cd)$ is a spectral function and  statements {\rm (i)} and {\rm (ii)} of Theorem \ref{th3.16} hold for any function $y(\cd)\in \AC\cap\lI$  such that:

{\rm (a)} the equality $Jy'(t)-B(t)y(t)=\D(t) f_y(t)$ (a.e. on $\cI$) holds with some $f_y(\cd)\in \lI$;

{\rm (b)} the   boundary conditions
\begin{gather*}
U y(a)=0, \qquad \{\G_{0b}y(\cd), -\G_{1b}y(\cd)\} \in \eta_\tau
\end{gather*}
are satisfied.
\end{theorem}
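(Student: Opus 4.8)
The plan is to derive Theorem~\ref{th3.16.1} from Theorem~\ref{th3.16} almost directly: the additional hypothesis serves only to widen the admissible class of $y$ from $\cD$ to all of $\dom\cT_*$ and to upgrade the pseudospectral function to a spectral one. First I would dispose of the claim that $\s(\cd)=\s_\tau(\cd)$ is a spectral function. Since (A1) and (A2) hold together with one of the equivalent conditions in \eqref{3.38}, Proposition~\ref{pr3.13}(ii) guarantees that the set of spectral functions is nonempty and coincides with the set of pseudospectral ones; hence the pseudospectral $\s_\tau(\cd)$ is in fact spectral.

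The core of the argument is a bookkeeping identification of function classes. By the last equivalence in \eqref{3.38}, the extra hypothesis $\mul T=\{0\}$ is the same as $\cD=\dom\cT_*$, so the restriction $f_y\in\cL_0$ built into the definition \eqref{3.24.1} of $\cD$ becomes vacuous (indeed $\cL_0=\lI$). Now let $y(\cd)\in\AC\cap\lI$ satisfy (a) and (b). Comparing (a) and the first boundary condition $Uy(a)=0$ in (b) with the description \eqref{3.22.3} of $\dom\cT_*$, one sees that these conditions say exactly that $y(\cd)\in\dom\cT_*$, with the $f_y(\cd)$ of (a) playing the role of the function attached to $y(\cd)$ in \eqref{3.22.3}. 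Hence $y(\cd)\in\dom\cT_*=\cD$.

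It then remains only to invoke Theorem~\ref{th3.16}: the second boundary condition in (b), namely $\{\G_{0b}y(\cd),-\G_{1b}y(\cd)\}\in\eta_\tau$, is precisely the hypothesis of that theorem, so statements (i) and (ii) of Theorem~\ref{th3.16} apply verbatim to $y(\cd)$ and yield the conclusion. I do not expect any genuine obstacle here; the whole content is unwrapping the definitions and citing the equivalences of \eqref{3.38} together with the coincidence of spectral and pseudospectral functions from Proposition~\ref{pr3.13}(ii). The only point meriting a careful sentence is that the $f_y(\cd)\in\lI$ furnished by (a) is the same function that \eqref{3.22.3} and \eqref{3.24.1} attach to $y(\cd)$ (unique up to $||\cd||_\D$-equivalence), so that membership in $\dom\cT_*$ automatically places $y(\cd)$ in $\cD$ once $\cL_0=\lI$.
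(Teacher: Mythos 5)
Your argument matches the paper's proof essentially verbatim: Proposition \ref{pr3.13}(ii) upgrades $\s_\tau(\cd)$ to a spectral function, conditions (a) and $Uy(a)=0$ place $y(\cd)$ in $\dom\cT_*$ via \eqref{3.22.3}, the last equivalence in \eqref{3.38} gives $\cD=\dom\cT_*$ so $y(\cd)\in\cD$, and Theorem \ref{th3.16} then applies. Your extra remark that the $f_y(\cd)$ of (a) coincides (up to $\|\cdot\|_\D$-equivalence) with the function attached to $y(\cd)$ in \eqref{3.22.3} is a correct and harmless refinement of the same argument.
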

\begin{proof}
It follows from Proposition \ref{pr3.13}, (ii) that $\s(\cd)$ is a spectral function. Next, assume that $y(\cd)$ satisfies the conditions of the theorem. Then $y(\cd)\in \dom\cT_*$ and the last condition in \eqref{3.38} yields $y(\cd)\in \cD$.  Moreover,
$\{\G_{0b}y(\cd),$ $ -\G_{1b}y(\cd)\}\in \eta_\tau$ and by Theorem \ref{th3.16} statements {\rm (i)} and {\rm (ii)} of this theorem hold.
\end{proof}
\section{Uniform convergence of the inverse Fourier transform for differential equations}
\subsection{Preliminary results}
In this section we apply the above results to  ordinary differential operators of an even order  on an interval  $\cI=[a,b\rangle \; (-\infty<a<b\leq \infty)$ with the regular endpoint $a$.

Assume that
\begin {equation}\label{4.1}
l[y]= \sum_{k=1}^r  (-1)^k  (p_{r-k}(t)y^{(k)})^{(k)} + p_r(t) y
\end{equation}
is a symmetric differential expression  of  an even order $n=2r$ with
 operator valued coefficients $p_j(\cd):\cI\to \B(\bC^m)$ satisfying $p_0^{-1}(t)\in \B(\bC^m)$ and $ p_j(t)=p_j^*(t), \; t\in\cI$. Moreover, it is assumed that the operator-functions $p_0^{-1}(t)$ and  $p_j(t),\;j\in\{1,\dots , r\})$ are locally integrable.

The quasi-derivatives $y^{[j]}(\cd), \; j\in \{0,\; \dots,\; 2r\},$ of a function $y(\cd):\cI\to \bC^m$ are defined as follows \cite{Wei, KogRof75}:
\begin{gather}
y^{[j]}=y^{(j)}, \quad j\in \{0,1, \dots, r-1\}, \qquad
y^{[r]}=p_0 y^{(r)}   \label{4.2}\\
y^{[r+j]}= - (y^{[r+j-1]})' + p_j y^{(r-j)},\;\;\;j\in \{1,\dots. r\}\label{4.4}
\end{gather}
The quasi-derivatives $Y^{[j]}(\cd)$ of an operator-valued function $Y(\cd):\cI\to \B (\bC^\nu, \bC^m)$ are defined by \eqref{4.2} -- \eqref{4.4} with $Y$ instead of $y$.

Denote by $\dom l$ the set of all functions $y(\cd):\cI\to \bC^m$ such that $y^{[j]}(\cd)\in\ACm $ for all $j\in\{0,1, \dots, 2r-1\}$ and let $l[y]=y^{[2r]},\; y\in\dom l$.

Next assume that $\D(\cd):\cI\to\B (\bC^m)$ is a locally integrable  operator function satisfying $\D(t)\geq 0$  for any  $t\in\cI$. We consider the differential equation
\begin{gather}\label{4.5}
l[y]=\l \D(t) y, \quad t\in\cI,\;\;\l\in\bC
\end{gather}
and the corresponding inhomogeneous equation
\begin{gather}\label{4.6}
l[y]= \D(t) f(t), \quad t\in\cI,
\end{gather}
where $f(\cd)\in \lI$.

A function $y(\cd)\in\dom l$ is a solution of \eqref{4.5} (\eqref{4.6}), if it satisfies \eqref{4.5} (resp. \eqref{4.6}) a.e. on $\cI$. An operator function  $Y(\cd):\cI\to\B(\bC^\nu,\bC^m)$ is a solution of \eqref{4.5}, if the quasi-derivatives $Y^{[j]}(\cd), \;j\in \{0,1,\dots, 2r-1\}, $ are absolutely continuous on each segment $[a,c]\subset \cI$ and the equality $Y^{[2r]}(t)=\l \D(t)Y(t)$ holds a.e. on $\cI$.
\begin{definition}\label{def4.0}
Differential equation \eqref{4.5} is called regular if it is given on a compact interval $\cI=[a,b]$ (this implies that $\int\limits_{\cI} | | (p_0(t))^{-1} ||\, dt<\infty$, $\int\limits_{\cI} |  | p_k(t) ||\, dt<\infty, \; k\in\{1, 2, \dots , r\},$ and   $\int\limits_{\cI} ||\D(t)| | \, dt <\infty$).
\end{definition}
With a function $y(\cd)\in\dom l$ one associates a function $\bold y(\cd):\cI\to (\bC^m)^{2r}$, given by
\begin{gather}\label{4.6.1}
\bold y(t)=y(t) \oplus y^{[1]}(t) \oplus \dots \oplus y^{[r-1]}(t)\oplus y^{[2r-1]}(t) \oplus  y^{[2r-2]}(t)\oplus \dots\oplus y^{[r]}(t).
\end{gather}
With an operator solution $Y(\cd):\cI\to \B (\bC^\nu,\bC^m)$ of \eqref{4.5} one associates the operator function $\bold Y(\cd):\cI\to \B \left(\bC^\nu,(\bC^m)^{2r}\right )$ given by
\begin{gather}\label{4.7}
\bold Y(t)=(Y(t),\, \dots, \, Y^{[r-1]}(t),\,Y^{[2r-1]}(t),\, \dots, \, Y^{[r]}(t))^\top(\in \B (\bC^\nu,(\bC^m)^{2r}).
\end{gather}

Equation \eqref{4.5} gives rise to the maximal linear relations $\sma$ in $\lI$ and $\Sma$ in $\LI$ defined as follows: $\sma$ is the set of all pairs $\{y(\cd), f(\cd)\}\in (\lI)^2$ such that $y(\cd)\in\dom l$ and \eqref{4.6} holds a.e. on $\cI$, while  $\Sma=\wt\pi_\D \sma$.

It turns out that the equation \eqref{4.5} is equivalent in fact to a
certain  Hamiltonian system. More precisely, the following
proposition is implied by the results of \cite{KogRof75}.
\begin{proposition}\label{pr4.1}
Let $l[y]$ be the expression \eqref{4.1} and let
\begin{gather}
\cJ=\begin{pmatrix} 0 &  -I_{mr}  \cr I_{mr}  & 0
\end{pmatrix}: (\bC^m)^r \oplus (\bC^m)^r \to (\bC^m)^r \oplus (\bC^m)^r \label{4.7.1}\\
\wt\D(t)=\begin{pmatrix} \D(t) & 0 & \cdots & 0 \cr 0  & 0 & \cdots &0 \cr \vdots& \vdots &\ddots & \vdots \cr 0 & 0 & \cdots &0\end{pmatrix}:\underbrace{\bC^m\oplus\bC^m\oplus\cdots \, \oplus \bC^m}_{2r\;\;{\rm times}}\to\underbrace{\bC^m\oplus\bC^m\oplus\dots\, \oplus \bC^m}_{2r\;\;{\rm times}}\label{4.7.2}
\end{gather}
where $\D(t)$ is taken from \eqref{4.5}.  Then there exists a
locally integrable   operator function $ B(t)= B^*(t)(\in \B ((\bC^m)^{2r}), \; t\in \cI,$ (defined in terms of $p_j$ and $q_j$) such that the Hamiltonian  system
\begin {equation}\label{4.8}
 \cJ \bold y'- B(t)\bold y=\l\wt\D(t) \bold y, \quad t\in\cI, \;\;\l\in\bC
\end{equation}
and the corresponding inhomogeneous system
\begin {equation}\label{4.9}
 J \bold y'-B(t) \bold y= \wt\D(t)\dot f (t), \quad t\in\cI
\end{equation}
possesses  the following properties:

{\rm (i)} The relation $Y(\cd,\l)\to \bold Y(\cd,\l)$, where $\bold Y(\cd,\l)$ is given by \eqref{4.7}, gives a bijective correspondence between all  $\B (\bC^\nu,\bC^m)$-valued operator solutions $Y(\cd, \l)$ of \eqref{4.5} and all $\B (\bC^\nu,(\bC^m)^{2r})$-valued operator solutions  $\bold Y(\cd,\l)$ of \eqref{4.8}.

{\rm (ii)} Let $\tma$ be the maximal linear relation in
$\cL_{\wt\D}^2(\cI; (\bC^m)^{2r})$ induced by system \eqref{4.8}. Then the equality $\cU_1\{y(\cd),f(\cd)\}=\{\bold y(\cd), \dot f(\cd)\}, \; \{y(\cd),f(\cd)\}\in \sma,$ where \begin{gather}\label{4.10}
\dot f(t)=f(t)\oplus \, \underbrace{0\oplus \,\dots \,\oplus 0}_{2r-1\;\;{\rm times}}(\in (\bC^m)^{2r}),
\end{gather}
defines  a
bijective linear operator $\cU_1$ from $\sma$ onto $\tma$.

{\rm (iii)} Let  $\Tma$ be the maximal relation
in $L_{\wt\D}^2(\cI;(\bC^m)^{2r})$ induced by system \eqref{4.8}. Then
the equality $\cU_2 \wt f=\pi_{\wt\D} \dot f(\cd),\; \wt f\in \LIW, \; f(\cd)\in\wt f,$ defines a unitary operator $\cU_2$ from $\LIW$ onto $L_{\wt\D}^2(\cI; (\bC^m)^{2r})$ such that
\begin{gather}
(\cU_2\oplus \cU_2)\Sma=\Tma.\label{4.12}
\end{gather}
\end{proposition}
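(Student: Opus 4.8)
The plan is to carry out the classical reduction of the higher-order expression $l[y]$ to a first-order Hamiltonian system by means of the quasi-derivatives \eqref{4.2}--\eqref{4.4}, following \cite{KogRof75}, and then to deduce the three assertions by tracking how the quotient maps interact with the correspondence $y\leftrightarrow\bold y$. The whole content is the explicit construction of $B$; once $B$ is fixed, the three properties are essentially bookkeeping together with one pointwise norm identity.

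First I would construct the coefficient $B(\cd)$ explicitly. Differentiating the quasi-derivatives gives $(y^{[j]})'=y^{[j+1]}$ for $j\in\{0,\dots,r-2\}$, then $(y^{[r-1]})'=p_0^{-1}y^{[r]}$ (from $y^{[r]}=p_0 y^{(r)}$), and $(y^{[r+j-1]})'=-y^{[r+j]}+p_j y^{[r-j]}$ for $j\in\{1,\dots,r\}$, the case $j=r$ reading $(y^{[2r-1]})'=-l[y]+p_r y$. Arranging these scalar identities in the order prescribed by \eqref{4.6.1} and using $\cJ^{-1}=-\cJ$, I would read off a block matrix $B(t)$ for which the whole string collapses to the single vector identity $\cJ\bold y'-B(t)\bold y=\wt\D(t)\dot f(t)$ precisely when $l[y]=\D f$; here the right-hand side carries $\D f$ in its first block only, which is exactly the shape of $\wt\D\dot f$ dictated by \eqref{4.7.2} and \eqref{4.10}. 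Local integrability of the entries of $B$ follows from that of $p_0^{-1},p_1,\dots,p_r$, and the symmetry $B=B^*$ from $p_j=p_j^*$.

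With $B$ in hand the three properties are short. For (i) the map $Y\mapsto\bold Y$ is injective, since $Y$ is the first block of $\bold Y$, and it is onto the solution set of \eqref{4.8}, since the construction shows $\bold Y$ solves \eqref{4.8} exactly when $Y$ solves \eqref{4.5}, while the recursion \eqref{4.2}--\eqref{4.4} recovers $Y$ and all its quasi-derivatives from any solution of \eqref{4.8}. Statement (ii) is then immediate: $\{y,f\}\in\sma$ means $y\in\dom l$ and $l[y]=\D f$, which by the construction is equivalent to $\{\bold y,\dot f\}\in\tma$, so $\cU_1$ is a linear bijection. For (iii) the decisive point is that $\wt\D$ sees only the first block: since $(\wt\D\dot f,\dot f)=(\D f,f)$ pointwise, one has $||\dot f||_{\wt\D}=||f||_\D$, so $\cU_2$ is a well-defined isometry; it is onto because any class in $L_{\wt\D}^2(\cI;(\bC^m)^{2r})$ has a representative $g=(g_0,\dots,g_{2r-1})$ whose $\wt\D$-class equals that of $g_0\oplus 0\oplus\dots\oplus 0$, i.e.\ equals $\cU_2\pi_\D g_0$. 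Finally, because $\bold y-\dot y$ has vanishing first block, $\wt\D(\bold y-\dot y)=0$, so $\pi_{\wt\D}\bold y=\pi_{\wt\D}\dot y$; combining this with the bijection $\cU_1$ gives $(\cU_2\oplus\cU_2)\wt\pi_\D\{y,f\}=\wt\pi_{\wt\D}\{\bold y,\dot f\}$, and applying this to all of $\sma$ yields \eqref{4.12}.

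The hard part will be the bookkeeping in assembling $B$: the ordering in \eqref{4.6.1} places the upper half $y^{[r]},\dots,y^{[2r-1]}$ in reverse order within the lower half of $\bold y$, so one must match indices with care and verify that the off-diagonal blocks pair up to make $B=B^*$. Once this is carried out correctly, the remaining verifications---local integrability of $B$ and the isometry identity for $\cU_2$---are routine.
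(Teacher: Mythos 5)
Your proposal is correct and is essentially the paper's own route: the paper gives no proof of this proposition, deferring entirely to the classical Kogan--Rofe-Beketov reduction \cite{KogRof75}, and your explicit assembly of $B$ from the quasi-derivative identities $(y^{[j]})'=y^{[j+1]}$, $(y^{[r-1]})'=p_0^{-1}y^{[r]}$, $(y^{[r+j-1]})'=-y^{[r+j]}+p_jy^{[r-j]}$, together with the observation that $\wt\D$ sees only the first block, is precisely that argument. One small point to polish: a pair $\{\bold w,\bold g\}\in\tma$ may have $\bold g$ with nonzero blocks beyond the first, so surjectivity of $\cU_1$ in (ii) tacitly uses the identification $\wt\D\bold g=\wt\D\dot g_1$ (a discrepancy that genuinely disappears only under $\pi_{\wt\D}$ in (iii), exactly as your representative argument for $\cU_2$ shows), an imprecision inherited from the statement itself rather than a gap in your argument.
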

Let $\tma,\Tma$ and $\tmi,\Tmi$ be maximal and minimal relations for system \eqref{4.8} corresponding to the equation \eqref{4.5} (see Proposition \ref{pr4.1}). It follows from Proposition \ref{pr4.1}, (ii) that there exists the limit
\begin{gather*}
[y,z]_b:=\lim_{t\uparrow b}(\cJ\bold y(t), \bold z(t)), \quad y,z\in\dom\sma.
\end{gather*}
This fact enables one to define the linear relation $\cS_a$ in $\lIW$ and the minimal linear relation $\Smi$ in $\LIW$ for the equation \eqref{4.5} by setting
\begin{gather*}
\cS_a=\{\{y(\cd), f(\cd)\}\in\sma: \bold y(a)=0 \;\;{\rm and}\;\; [y,z]_b=0\;\; \text{for every}\;\; z\in\dom\sma\}
\end{gather*}
and $\Smi=\wt\pi_\D \cS_a$. It follows from Proposition \ref{pr4.1} that \begin{gather}\label{4.13}
(\cU_2\oplus \cU_2)\Smi=\Tmi,
\end{gather}
where $\cU_2$ is a unitary operator defined in Proposition \ref{pr4.1}, (iii). This and \eqref{4.12} imply that $\Smi$ is a closed symmetric linear relation in $\LIW$ and $\Smi^*=\Sma$.

For $\l\in\bC$ denote by $\cN_\l$ the linear space of all solutions $y(\cd)$ of  \eqref{4.5} belonging to $\lIW$. The numbers $N_+=\dim \cN_{i}$ and  $N_-=\dim \cN_{-i}$ will be called the formal deficiency indices of the equation \eqref{4.5}. It follows from Proposition \ref{pr4.1}, (i) that $N_\pm$ are formal deficiency indices of the system \eqref{4.8}. Therefore  $N_\pm=\dim \cN_\l,\; \l\in\bC_\pm,$ and  $mr\leq N_\pm\leq 2mr.$
\subsection{Differential equations with matrix-valued coefficients}
Similarly to Hamiltonian systems the equation \eqref{4.5} is called definite if there is only a trivial solution $y=0$ of the equation $l[y]=0$ satisfying $\D(t)y(t)=0$ (a.e. on $\cI$).

Let $\cJ$ be the operator \eqref{4.7.1}. Below we suppose that $U\in \B ((\bC^m)^{2r},(\bC^m)^{r} )$ is an operator satisfying
\begin{gather}\label{4.14}
U\cJ U^*=0 \quad{\rm and} \quad \ran U=(\bC^m)^r.
\end{gather}
\begin{definition}\label{def4.4}
The equation \eqref{4.5} will be called $U$-definite if there exists only the trivial solution $y=0$ of the equation $l[y]=0$ such that $U\bold y(a)=0$ and $\D(t)y(t)=0$ (a.e. on $\cI$).
\end{definition}
It follows from Assertion \ref{ass3.2} and Proposition \ref{pr4.1} that the equality
\begin{gather}\label{4.15}
S=\{\wt\pi_\D \{y,f\}:\{y,f\}\in \sma, \, U\bold y(a)=0\;\; {\rm and}\;\; [y,z]_b=0, \, z\in\dom\sma  \}
\end{gather}
defines a symmetric extension $S$ of $\Smi$ and $S^*=\wt \pi_\D \cS_*$, where $\cS_*$ is a linear relation in $\lIW$ given by $\cS_*=\{\{y,f\}\in\sma:U\bold y(a)=0\}$. Clearly, the domain of $\cS_*$ is
\begin{multline}\label{4.15.0}
\dom\cS_*=\{y\in\dom l\cap \lIW: \, l[y]=\D(t) f_y(t) \\
(\text{a.e. on}\;\; \cI)\;\; \text{with some} \;\; f_y(\cd)\in \lIW
\;\; {\rm and}\;\;U\bold y(a)=0 \}.
\end{multline}
In the following we put $\gH':=\LIW$ and $\gH_0':=\gH'\ominus \mul S$. We will also denote by $\cK',\; \cK_0$ and $\cE$ the linear manifolds in $\lIW$ defined by
\begin{gather}
\cK'=\{f(\cd)\in\lIW: \text{ there exists  a solution} \; y(\cd)\in\dom l\; \text{ of \eqref{4.5} such}\quad\label{4.15.0.1}\\
\qquad\qquad\text{ that}\;\; \D(t) y(t)=0\;\; (\text {a.e. on} \;\; \cI), \; U \bold y(a)=0 \;\;{ \rm and} \;\; [y,z]_b=0,\;z\in\dom\sma\}\nonumber\\
\cK_0=\{f(\cd)\in\lIW: (f(\cd),g(\cd))_\D=0 \;\; \text{for any}\;\; g(\cd)\in\cK'\}.\label{4.15.1}\\
\cE=\{y(\cd)\in \dom S_*:f_y(\cd)\in\cK_0\}\label{4.15.2}
\end{gather}
Clearly, $\mul S=\pi_\D\cK'$  and $\gH_0'=\pi_\D\cK_0$.

Let $\f_U(\cd,\l)(\in B ((\bC^m)^r,\bC^m)$ be the operator solution of \eqref{4.5} such that the corresponding operator-function $\pmb\f_U(t,\l):\cI\to B ((\bC^m)^r,(\bC^m)^{2r})$ given by
\begin{gather} \label{4.16}
\pmb\f_U(t,\l)=(\f_U(t,\l),\,\dots,\,\f_U^{[r-1]}(t,\l),\, \f_U^{[2r-1]}(t,\l),\, \dots,\, \f_U^{[r]}(t,\l) )^\top
\end{gather}
satisfies $\pmb\f_U(a,\l)=-\cJ U^*$.
\begin{definition}\label{def4.5}
A distribution function $\s(\cd):\bR\to \B((\bC^m)^r)$ is called a pseudospectral function of the equation \eqref{4.5}  if:

(i) for each function $f(\cd)\in\lIW$ there exists a function $\wh f(\cd)\in \lSm$ such that
\begin{gather}\label{4.18}
\wh f(s)=\int_\cI \f_U^*(t,s) \D(t)  f(t)\,dt.
\end{gather}
(the integral in \eqref{4.18} converges in $\lSm$, c.f. Definition \ref{def3.3}, {\rm (i)});

(ii) $\pi_\s \wh f(\cd)=0, \; f(\cd)\in\cK',$  and   $||\wh f(\cd)||_{\lSm}=||f(\cd)||_{\lIW},\;f(\cd)\in\cK_0$,
\end{definition}
The operator-function $\wh f(\cd)\in \lSm$ defined  by \eqref{4.18} is called the (generalized) Fourier transform of a function $f(\cd)\in\lIW$. Clearly, the function $\wh f(\cd)$ is defined by $f(\cd)$ uniquely up to the $\s$- equivalence.
\begin{definition}\label{def4.6}
A distribution function $\s(\cd):\bR\to \B((\bC^m)^r)$ is called a spectral function of the equation  \eqref{4.5} if for each function $f(\cd)\in\lIW$ with compact support the  Parseval equality $||\wh f(\cd)||_{\lSm}=||f(\cd)||_{\lIW}$ holds.
\end{definition}
Note that Remark \ref{rem3.3.2} and Definition \ref{def3.3.3} of an orthogonal pseudospectral (spectral) function remain valid, with the obvious modifications, for equation \eqref{4.5}.

By  using Proposition \ref{pr4.1} one can easily prove the following assertion.
\begin{assertion}\label{ass4.7}
A distribution function $\s(\cd):\bR\to \B((\bC^m)^r)$ is a pseudospectral (spectral) function of the system \eqref{4.8} with respect to the Fourier transform
\begin{gather}\label{4.20}
\wh{\bold f}(s)=\int_\cI \pmb\f_U^*(t,s) \wt\D(t)  \bold f(t)\,dt, \quad \bold f(\cd)\in \cL_\D^2(\cI;(\bC^m)^{2r})
\end{gather}
if and only if it is a pseudospectral (resp. spectral) function of the equation \eqref{4.5} with respect to the Fourier transform \eqref{4.18}; moreover, $\wh{\bold y} (s)=\wh y(s), \; y(\cd)\in\dom\sma$.
\end{assertion}
Applying Theorem \ref{th3.12}, Proposition \ref{pr3.13} and Theorems \ref{th3.16}, \ref{th3.16.1} to system \eqref{4.8} and taking  Assertion \ref{ass4.7} into account we arrive at the following theorems.
\begin{theorem}\label{th4.8}
 Assume that:

${\rm (A1')}$ equation  \eqref{4.5} has equal formal deficiency indices $N_+=N_-=:d$;

${\rm (A2')}$  $U\in\B((\bC^m)^{2r},(\bC^m)^r)$ is an operator  satisfying  \eqref{4.14} and equation  \eqref{4.5} is $U$-definite.

Then:  {\rm (i)} there exists a Nevanlinna operator function $M(\cd)$ of the form \eqref{3.33} (with $p=mr$) such that the equalities \eqref{3.36} and \eqref{3.37} establish a bijective correspondence  $\s(\cd)=\s_\tau(\cd)$ between all functions $\tau=\tau(\cd)\in \wt R (\bC^{d-mr})$ satisfying the  condition \eqref{3.37.1} (i.e., all admissible boundary parameters) and  all pseudospectral functions $\s(\cd)$ of the equation \eqref{4.5}.  Moreover, all functions $\tau(\cd)\in \wt R (\bC^{d-mr})$ satisfy \eqref{3.37.1} if and only if $\mul S=\mul S^*$.

{\rm (ii)} The set of spectral functions of the equation  \eqref{4.5} is not empty  if and only if $\cK'\subset \ker \pi_\D$ (or, equivalently, $\mul S=0$). Moreover, in this case the sets of  spectral and pseudospectral functions coincide and hence statement {\rm (i)} holds for spectral functions.
\end{theorem}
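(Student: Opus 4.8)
The plan is to reduce everything to the Hamiltonian-system theory already developed in Section 3, exploiting the equivalence between equation \eqref{4.5} and system \eqref{4.8} furnished by Proposition \ref{pr4.1}, together with the identification of (pseudo)spectral functions in Assertion \ref{ass4.7}. Concretely, I would first verify that the hypotheses ${\rm (A1')}$ and ${\rm (A2')}$ on the equation are precisely the hypotheses {\rm (A1)} and {\rm (A2)} of Proposition \ref{pr3.9} for the system \eqref{4.8} (with $n=2mr$, $p=mr$ and $J=\cJ$), then apply Theorem \ref{th3.12} and Proposition \ref{pr3.13}, (ii) to \eqref{4.8}, and finally transport the conclusions back to \eqref{4.5}.

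For the translation of hypotheses: by Proposition \ref{pr4.1}, (i) the correspondence $y\to\bold y$ is a bijection of the solution spaces of \eqref{4.5} and \eqref{4.8}, and since by \eqref{4.7.2} the equality $\D(t)y(t)=0$ holds a.e. iff $\wt\D(t)\bold y(t)=0$ a.e., the null manifold $\cN$ of the equation corresponds to that of the system. As recalled before the theorem, the formal deficiency indices of \eqref{4.5} coincide with those of \eqref{4.8}, so ${\rm (A1')}$ becomes {\rm (A1)}. Likewise \eqref{4.14} is exactly \eqref{3.20} for $J=\cJ$, and $U$-definiteness in the sense of Definition \ref{def4.4} is, under $y\to\bold y$, $U$-definiteness of \eqref{4.8} in the sense of Definition \ref{def3.1.0}; hence ${\rm (A2')}$ becomes {\rm (A2)}.

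With the hypotheses in place, Theorem \ref{th3.12} supplies the Nevanlinna function $M(\cd)$ of the form \eqref{3.33} with $p=mr$ and the bijection $\s=\s_\tau$ (via \eqref{3.36}, \eqref{3.37}) between admissible parameters $\tau\in\wt R(\bC^{d-mr})$ and pseudospectral functions of \eqref{4.8}; Assertion \ref{ass4.7} then identifies the latter with the pseudospectral functions of \eqref{4.5}, and the relation $\wh{\bold y}=\wh y$ shows the Fourier transforms match, yielding statement (i). For the ``moreover'' clause I would use Proposition \ref{pr4.1}, (iii): the unitary $\cU_2$ satisfies $(\cU_2\oplus\cU_2)\Sma=\Tma$, and since $S$ (see \eqref{4.15}) and $T$ (see \eqref{3.22.1}) are cut out of $\Sma$, $\Tma$ by the same boundary conditions $U\bold y(a)=0$ and $[y,z]_b=0$, one gets $(\cU_2\oplus\cU_2)S=T$ and, by unitarity, $(\cU_2\oplus\cU_2)S^*=T^*$. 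Consequently $\cU_2$ maps $\mul S$ onto $\mul T$ and $\mul S^*$ onto $\mul T^*$, so $\mul S=\mul S^*\Leftrightarrow\mul T=\mul T^*$, and the admissibility criterion transfers. Statement (ii) follows the same pattern: Proposition \ref{pr3.13}, (ii) gives that the spectral functions of \eqref{4.8} are nonempty iff $\mul T=\{0\}$ and that they then coincide with the pseudospectral ones; transporting through $\cU_2$ turns $\mul T=\{0\}$ into $\mul S=\{0\}$ and, using $\mul S=\pi_\D\cK'$, into $\cK'\subset\ker\pi_\D$, while Assertion \ref{ass4.7} matches spectral functions of the two problems.

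The main obstacle is not any single hard estimate but the careful bookkeeping of the equivalence: one must confirm that $\cU_2$ (and $\cU_1$) intertwine not only the maximal relations but also the symmetric extensions $S\leftrightarrow T$, their adjoints, and the auxiliary manifolds $\cK',\cK_0\leftrightarrow\cL',\cL_0$, and that $U$-definiteness and the formal deficiency indices are genuinely preserved under $y\to\bold y$. Once these identifications are secured, no further analysis is needed and the theorem is an immediate corollary of Theorem \ref{th3.12} and Proposition \ref{pr3.13}, (ii).
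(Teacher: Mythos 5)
Your proposal is correct and is essentially the paper's own argument: the paper derives Theorem \ref{th4.8} exactly by applying Theorem \ref{th3.12} and Proposition \ref{pr3.13} to the equivalent Hamiltonian system \eqref{4.8} and invoking Assertion \ref{ass4.7}, with the hypothesis translation $({\rm A1'}),({\rm A2'})\leftrightarrow({\rm A1}),({\rm A2})$ and the intertwining of $S$, $T$ via $\cU_2$ handled through Proposition \ref{pr4.1} and Assertion \ref{ass3.2}. Your bookkeeping of $(\cU_2\oplus\cU_2)S=T$, $\mul S\leftrightarrow\mul T$, and $\mul S=\pi_\D\cK'$ is precisely what the paper leaves implicit.
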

\begin{theorem}\label{th4.11}
Let for differential equation \eqref{4.5} the assumptions ${\rm (A1')}$ and ${\rm (A2')}$ in Theorem  \ref{th4.8} and the following assumption ${\rm (A3')}$ be satisfied:

${\rm (A3')}$ $(G_{0b}, G_{1b})^\top:\dom \sma \to \bC^{d-mr}\oplus \bC^{d-mr}$ is a surjective linear operator satisfying
\begin{gather*}
[y,z]_b=(G_{0b}y,G_{1b}z)- (G_{1b}y,G_{0b}z),\quad y,z \in \dom\sma.
\end{gather*}
Assume also that $\cE\subset \dom \cS_*$ is linear manifold  \eqref{4.15.2} and let $\tau (\cd) \in \wt R (\bC^{d-mr})$ be a relation-valued function satisfying  \eqref{3.37.1}, let $\s(\cd)=\s_\tau(\cd)$ be the corresponding  pseudospectral function of the equation  and let  $\eta_\tau\in \C (\bC^{d-mr})$ be the linear relation defined in Theorem \ref{th2.9}. Then for each function $y(\cd)\in\cE$ satisfying the boundary condition $\{G_{0b}y(\cd), -G_{1b}y(\cd)\} \in \eta_\tau$ the following statements hold:

{\rm (i)} If $\wh y(\cd)$ is the Fourier transform \eqref{4.18} of $y(\cd)$, then for each $t\in\cI$
\begin{gather}\label{4.22}
y^{[k]}(t)=\int_{\bR} \f_U^{[k]}(t,s) d\s(s) \wh y(s), \;\; k\in \{0,1, \dots, 2r-1\},
\end{gather}
where the integral exists as the Lebesgue integral (in the same sense as the integral  in \eqref{3.52}).

{\rm (ii)} The integral in \eqref{4.22} converges uniformly on each compact interval $[a,c]\subset \cI$ in the same sense as integral in \eqref{3.52} (see Theorem \ref{th3.16}, {\rm (ii)}).

If in addition $\cK'\subset \ker \pi_\D$ (or, equivalently, $\mul S=0$),  then   $\s(\cd)=\s_\tau (\cd)$ is a spectral function and  statements {\rm (i)} and {\rm (ii)} hold for any function $y(\cd)\in\dom \cS_*$ satisfying the boundary condition   $\{G_{0b}y(\cd), -G_{1b}y(\cd)\} \in \eta_\tau$.
\end{theorem}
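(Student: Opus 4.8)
The plan is to deduce the theorem from the already-established results for Hamiltonian systems by transporting everything through the equivalence of Proposition~\ref{pr4.1}. First I would note that the hypotheses (A1$'$)--(A3$'$) on the equation \eqref{4.5} are carried by Proposition~\ref{pr4.1} onto the hypotheses (A1)--(A3) for the associated system \eqref{4.8}: the formal deficiency indices coincide by Proposition~\ref{pr4.1},~(i), so $N_\pm=d$ for the system as well; $U$-definiteness of \eqref{4.5} is by definition $U$-definiteness of \eqref{4.8}; and the boundary pair $(G_{0b},G_{1b})$ from (A3$'$) and the pair $(\G_{0b},\G_{1b})$ from (A3) are built from one and the same form $[y,z]_b=\lim_{t\uparrow b}(\cJ\bold y(t),\bold z(t))$, so one may simply take $\G_{jb}\bold y=G_{jb}y$. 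Next I would track the attached objects under the unitary $\cU_2$ (and $\cU_1$) of Proposition~\ref{pr4.1}: the symmetric extension $S$ of \eqref{4.15} goes to the relation $T$ of \eqref{3.22.1}, $\cS_*$ to $\cT_*$, and the manifolds $\cK',\cK_0,\cE$ of \eqref{4.15.0.1}--\eqref{4.15.2} to $\cL',\cL_0,\cD$ of \eqref{3.23.1}--\eqref{3.24.1}. Since $p=mr$ here, the parameter space $\bC^{d-mr}$ is the space $\bC^{d-p}$ of the system, the relation $\eta_\tau$ produced by Theorem~\ref{th2.9} is literally unchanged (it depends only on $\tau$), and the boundary condition $\{G_{0b}y,-G_{1b}y\}\in\eta_\tau$ for \eqref{4.5} is exactly $\{\G_{0b}\bold y,-\G_{1b}\bold y\}\in\eta_\tau$ for \eqref{4.8}.

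With these identifications, a function $y\in\cE$ obeying the boundary condition gives $\bold y\in\cD$ obeying the system boundary condition, so Theorem~\ref{th3.16} applies to \eqref{4.8}. It yields, for the $\B((\bC^m)^{2r})$-valued solution $\pmb\f_U$ of \eqref{4.16}, the inverse transform
\[
\bold y(t)=\int_{\bR}\pmb\f_U(t,s)\,d\s(s)\,\wh{\bold y}(s),\qquad t\in\cI,
\]
the integral being an absolutely convergent Lebesgue integral that converges uniformly on each compact $[a,c]\subset\cI$. By Assertion~\ref{ass4.7} the system Fourier transform coincides with the equation Fourier transform, $\wh{\bold y}(s)=\wh y(s)$, so $\wh y$ may replace $\wh{\bold y}$. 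Finally I would read off the components of this single vector identity in the ordering fixed by \eqref{4.6.1} and \eqref{4.16}: the slot carrying $y^{[k]}$ reproduces precisely $y^{[k]}(t)=\int_{\bR}\f_U^{[k]}(t,s)\,d\s(s)\,\wh y(s)$, which is \eqref{4.22}, for every $k\in\{0,1,\dots,2r-1\}$, and uniform convergence of the vector integral on $[a,c]$ descends to each component, establishing statements (i) and (ii).

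For the last assertion I would use the chain of equivalences \eqref{3.38}. Under $\cU_2$ the condition $\cK'\subset\ker\pi_\D$ for \eqref{4.5} becomes $\cL'\subset\ker\pi_\D$ for \eqref{4.8}, equivalently $\mul T=\{0\}$ (whence $\mul S=\{0\}$) and $\cD=\dom\cT_*$. Then Theorem~\ref{th3.16.1} applies to the system: $\s=\s_\tau$ is a spectral function, and its conclusions hold for every $\bold y\in\dom\cT_*$ satisfying the boundary condition. Translating back through Assertion~\ref{ass4.7} and Proposition~\ref{pr4.1} gives statements (i) and (ii) for every $y\in\dom\cS_*$ with $\{G_{0b}y,-G_{1b}y\}\in\eta_\tau$.

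I expect the difficulty here to be organizational rather than analytic, since all the hard estimates already live inside Theorems~\ref{th3.16} and~\ref{th3.16.1} for the system. The real work is the bookkeeping that every relation, manifold, boundary map and the Fourier transform itself is matched exactly by Proposition~\ref{pr4.1}, and above all that the nonstandard ordering of the quasi-derivatives in \eqref{4.6.1}--\eqref{4.16} is respected, so that the scalar identities \eqref{4.22} are genuinely the components of the one vector identity delivered by Theorem~\ref{th3.16}.
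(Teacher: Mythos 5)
Your proposal is correct and coincides with the paper's own argument: the paper proves Theorem \ref{th4.11} precisely by applying Theorems \ref{th3.16} and \ref{th3.16.1} to the associated Hamiltonian system \eqref{4.8} and invoking Proposition \ref{pr4.1} together with Assertion \ref{ass4.7}, exactly the transfer you carry out. Your bookkeeping of the correspondences ($S\leftrightarrow T$, $\cS_*\leftrightarrow\cT_*$, $\cK',\cK_0,\cE\leftrightarrow\cL',\cL_0,\cD$, $\wh{\bold y}=\wh y$, and the component ordering of \eqref{4.6.1} and \eqref{4.16} yielding \eqref{4.22} for each $k$) is accurate, including the final passage via \eqref{3.38} to the spectral-function case.
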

\begin{remark}\label{rem4.11.1}
(i) In the case of the regular  equation \eqref{4.5}  one has $d=2mr$. In this case for $y\in \dom\sma$ one can put
\begin{gather*}
G_{0b}y=  y(b) \oplus y^{[1]}(b) \oplus \dots \oplus y^{[r-1]}(b), \quad G_{1b}y= y^{[2r-1]}(b) \oplus  y^{[2r-2]}(b)\oplus \dots\oplus y^{[r]}(b).
\end{gather*}
(ii) If the weight $\D(t)$ is invertible a.e. on $\cI$, then the condition  $\cK'\subset \ker \pi_\D$ in the last statement of Theorem \ref{th4.11} is obviously satisfied.
\end{remark}
\subsection{Scalar differential  equations}
In the case $m=1$ the differential expression $l[y]$  of the form \eqref{4.1} and the  equation \eqref{4.5} will be called a scalar expression and scalar equation respectively. Clearly, in this case the coefficients $p_j(\cd), \; q_j(\cd)$ and the weight  $\D(\cd)$ are real-valued functions.

It is easy to see that for scalar equation \eqref{4.5} the assumption ${\rm(A1')}$ in Theorem \ref{th4.8} is automatically satisfied.
\begin{lemma}\label{lem4.12}
Let $l[y]$ be a scalar expression \eqref{4.1} on an interval $\cI=[a,b\rangle$, let $B\subset \cI$ be a Borel set and let $y(\cd)\in\dom l$ be a function such that $y(t)=0$ (a.e. on $B$). Then $y^{[k]}(t)=0$ (a.e. on $B$), $k\in\{0,1, \dots, 2r\}$, that is there is a Borel set $B_0\subset B$ such that $\mu (B\setminus B_0)=0$, $y^{[2r]}(t)$ exists for each $t\in B_0$ and $y^{[k]}(t)=0, \; t\in B_0, \; k\in\{0,1, \dots, 2r\}$.
\end{lemma}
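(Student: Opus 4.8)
The crux is an elementary real-analysis fact that I would isolate as the main step: \emph{if $g\in\ACm$ and $g(t)=0$ for a.e.\ $t$ in a Borel set $B\subset\cI$, then $g'(t)=0$ for a.e.\ $t\in B$.} To prove it I would split $g$ into real and imaginary parts and treat each component, reducing to a real scalar absolutely continuous function. Put $Z=\{t:g(t)=0\}$, so that $\mu(B\setminus Z)=0$. By the Lebesgue density theorem almost every point of $Z$ is a density point of $Z$, and $g$ is differentiable almost everywhere; at a point $t_0\in Z$ enjoying both properties I would pick $t_n\in Z\setminus\{t_0\}$ with $t_n\to t_0$ (possible since a density point is an accumulation point of $Z$) and conclude $g'(t_0)=\lim_{n}\tfrac{g(t_n)-g(t_0)}{t_n-t_0}=0$ because both $g(t_n)$ and $g(t_0)$ vanish. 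Hence $g'=0$ a.e.\ on $Z$, and therefore a.e.\ on $B$. This density argument is the only genuinely nontrivial ingredient; everything else is bookkeeping along the definition \eqref{4.2}--\eqref{4.4}.

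Granting this fact, I would climb the quasi-derivatives in order, noting that since $y\in\dom l$ every $y^{[k]}$ with $0\le k\le 2r-1$ lies in $\ACm$, so the fact applies at each stage. Starting from $y^{[0]}=y=0$ a.e.\ on $B$ and using $y^{[k+1]}=(y^{[k]})'$ for $0\le k\le r-2$ (by \eqref{4.2}), a short induction gives $y^{[k]}=0$ a.e.\ on $B$ for all $k\in\{0,\dots,r-1\}$. For the passage to order $r$ I would invoke $y^{[r]}=p_0\,(y^{[r-1]})'$: since $y^{[r-1]}\in\ACm$ vanishes a.e.\ on $B$, the fact yields $(y^{[r-1]})'=0$ a.e.\ on $B$, whence $y^{[r]}=0$ a.e.\ on $B$ regardless of $p_0$.

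For the remaining orders I would run a finite induction on $j\in\{1,\dots,r\}$ using the recursion \eqref{4.4}, $y^{[r+j]}=-(y^{[r+j-1]})'+p_j\,y^{(r-j)}$. The base case $j=1$ is the previous step. Assuming $y^{[r+j-1]}=0$ a.e.\ on $B$, the fact (applicable because $y^{[r+j-1]}\in\ACm$ for $j\le r$) gives $(y^{[r+j-1]})'=0$ a.e.\ on $B$; moreover $y^{(r-j)}=y^{[r-j]}=0$ a.e.\ on $B$ by the first stage, so $p_j\,y^{(r-j)}=0$ a.e.\ on $B$. Summing, $y^{[r+j]}=0$ a.e.\ on $B$, which for $j=r$ covers $y^{[2r]}=l[y]$ as well (here $y^{[2r]}(t)$ is the a.e.\ derivative of $y^{[2r-1]}\in\ACm$, which exists a.e.). Thus $y^{[k]}=0$ a.e.\ on $B$ for every $k\in\{0,\dots,2r\}$.

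Finally I would assemble the single exceptional null set. For $0\le k\le 2r-1$ each $y^{[k]}$ is continuous, so $\{y^{[k]}=0\}$ is Borel, and the set on which $y^{[2r-1]}$ is differentiable with zero derivative is Borel too. Taking $B_0$ to be the intersection of $B$ with all these Borel sets yields a Borel set $B_0\subset B$ on which $y^{[2r]}$ exists and $y^{[k]}\equiv0$ for all $k$; the finitely many a.e.\ statements above force $\mu(B\setminus B_0)=0$. The one point to watch is that the possibly unbounded or vanishing coefficients $p_0,p_j$ never obstruct anything: at every stage the vanishing is transported through the a.e.\ derivative supplied by absolute continuity, and multiplication by $p_0$ or $p_j$ is applied only after the relevant derivative has already been shown to vanish.
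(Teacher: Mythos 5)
Your proposal is correct and takes essentially the same approach as the paper: both rest on the observation that at an accumulation point of the zero set where an absolutely continuous function is differentiable, the difference quotients force the derivative to vanish, and both then climb the recursion \eqref{4.2}--\eqref{4.4} step by step, applying the coefficients $p_0$ and $p_j$ only to quantities already shown to vanish. The sole difference is in justifying that the exceptional points are negligible: you invoke the Lebesgue density theorem, whereas the paper argues directly that the isolated points of the zero set form a null set via an inner-regularity (closed-subset) argument — a cosmetic variation.
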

\begin{proof}
Clearly, it is sufficient to prove the lemma for the case of a compact interval $\cI=[a,b]$. Moreover, we may assume without loss of generality that $y(t)=0, \; t\in B$.

Since $y(\cd)$ is absolutely continuous, there exists a Borel set $B'\subset \cI$ such that $\mu(\cI\setminus B')=0$, the derivative  $y'(t)$ exists for each $t\in B'$ and $y'(\cd)$ is a Borel measurable function on $B'$. Let $B_1:=B'\cap B$. Then $B_1\subset B, \; B_1\in\cA$, $\mu (B\setminus B_1)=0$ and $y'\up B_1$ is a Borel measurable function. Hence for the set $B_{00}':=\{ t\in B_1: y'(t)=0\}$ one has $B_{00}'\subset B_1\subset B$, $B_{00}'\in\cA$ and $y'(t)=0,\; t\in B_{00}'$. Next we show that $\mu (B\setminus B_{00}')=0$.

Denote by $B_2$ the set of all limit points of $B_1$ belonging to $B_1$. Assume that $t\subset B_2$. Then there exists a sequence $\{t_n\}_1^\infty$ such that $t_n\in B_1,\; t_n\neq t$ and $t_n\to t$. Moreover, $t_n,t\in B$ and, consequently, $y(t_n)=y(t)=0$. Note also that $t\in B_1$ and hence there exists the derivative
\begin{gather*}
y'(t)=\lim_{n\to\infty}\frac {y(t_n)-y(t)}{t_n-t}=0.
\end{gather*}
Thus $B_2\subset B_{00}'\subset B_1$ and, consequently, $(B_1\setminus B_{00}') \subset (B_1\setminus B_2)$. Recall that the lower Lebesgue measure $\mu_*(B)$ of the  set $B\subset \cI$ is defined by
\begin{gather*}
\mu_*(B)=\sup\{\mu (F): \, F\subset B \;\; \text{\rm and} \;\; F \;\;\text{\rm is closed} \}
\end{gather*}
and $\mu (B)=\mu_*(B)$ for $B\in \cA$. Since $B_1\setminus B_2$ is the set of all isolated points of $B_1$, it follows that all pints of a closed set $F\subset (B_1\setminus B_2)$ are isolated. Since $F$ is bounded, this implies that $F$ is finite and hence $\mu (F)=0$. Therefore $\mu_* (B_1\setminus B_2)=0$ and the relations
\begin{gather*}
0\leq\mu(B_1\setminus B_{00}')=\mu_*(B_1\setminus B_{00}')\leq \mu_*(B_1\setminus B_2)=0
\end{gather*}
show that $\mu(B_1\setminus B_{00}')=0$. Moreover, $B\setminus B_{00}'=(B_1\setminus B_{00}')\cup (B\setminus B_1)$, which yields the required equality $\mu(B\setminus B_{00}')=0$. Since $y^{[1]}(t)=y'(t)$ (a.e. on $\cI$), this implies that there is a Borel set $B_{00}\subset B$ such that $\mu (B\setminus B_{00})=0$ and $y^{[1]}(t)=0, \; t\in B_{00}$.
Now by using the above method one proves step by step the existence of Borel sets $B_{0k}\subset B$ such that $\mu (B\setminus B_{0k})=0$ and $y^{[k]}(t)=0, \; t\in B_{0k}, \; k\in \{0,1,\, \dots,\, 2r\}$. Finally, letting $B_0=\bigcap\limits_{k=0}^{2r} B_{0k} $ we obtain the set $B_0$ with the required properties.
\end{proof}
As usual we denote by  $\mu (\D>0)$  the Borel  measure of the set
\begin{gather*}
B_+:=\{t\in\cI: \D(t)>0\}.
\end{gather*}
\begin{proposition}\label{pr4.12.1}
For the scalar equation \eqref{4.5} the following statements are equivalent:

{\rm (\romannumeral 1)} The weight function $\D(\cd)$ is nontrivial, that is
\begin{gather}\label{4.24}
\mu (\D>0)\neq 0.
\end{gather}
{\rm (\romannumeral 2)} The equation \eqref{4.5} is definite.

{\rm (\romannumeral 3)} The  equation \eqref{4.5} is $U$-definite for any operator $U\in\B (\bC^{2r},\bC^r)$ satisfying
\begin{gather}\label{4.24.1}
U\cJ U^* =0 \quad {\rm  and} \quad \ran U=\bC^r.
\end{gather}

{\rm (\romannumeral 4)} There exists an operator  $U\in \B (\bC^{2r}, \bC^r)$ such that \eqref{4.24.1} holds and the equation \eqref{4.5} is $U$-definite.
\end{proposition}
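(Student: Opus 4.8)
The plan is to establish the cyclic chain of implications $(\mathrm{i})\Rightarrow(\mathrm{ii})\Rightarrow(\mathrm{iii})\Rightarrow(\mathrm{iv})\Rightarrow(\mathrm{i})$, which yields the equivalence of all four statements. Two of the links are immediate from the definitions. Definiteness means that $y=0$ is the only solution of $l[y]=0$ with $\D(t)y(t)=0$ a.e. on $\cI$, while $U$-definiteness merely adds the constraint $U\bold y(a)=0$ to this same (already trivial) solution set; hence $(\mathrm{ii})\Rightarrow(\mathrm{iii})$ holds for every admissible $U$, exactly as in the remark following Definition \ref{def3.1.0}. For $(\mathrm{iii})\Rightarrow(\mathrm{iv})$ I only need to exhibit one operator obeying \eqref{4.24.1}; the block operator $U=(I_r,\,0):\bC^r\oplus\bC^r\to\bC^r$ works, since a direct computation with \eqref{4.7.1} gives $U\cJ U^*=0$ and plainly $\ran U=\bC^r$. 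Thus the substantive content lies in $(\mathrm{i})\Rightarrow(\mathrm{ii})$ and $(\mathrm{iv})\Rightarrow(\mathrm{i})$.

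For $(\mathrm{i})\Rightarrow(\mathrm{ii})$ let $y(\cd)$ be a solution of $l[y]=0$ with $\D(t)y(t)=0$ a.e. on $\cI$. Since $m=1$, the weight is a nonnegative scalar, so on $B_+=\{t:\D(t)>0\}$ the equality $\D(t)y(t)=0$ forces $y(t)=0$ a.e. By \eqref{4.24} we have $\mu(B_+)\ne 0$, so $y$ vanishes a.e. on a set of positive measure. Applying Lemma \ref{lem4.12} with $B=B_+$ produces a Borel set $B_0\subset B_+$ with $\mu(B_+\setminus B_0)=0$ on which every quasi-derivative vanishes, i.e. $y^{[k]}(t)=0$ for all $t\in B_0$ and $k\in\{0,1,\dots,2r\}$. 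As $\mu(B_+)\ne 0$, the set $B_0$ is nonempty; fixing any $t_0\in B_0$ gives $\bold y(t_0)=0$ for the vector function \eqref{4.6.1}. By Proposition \ref{pr4.1}, (i) the function $\bold y(\cd)$ solves the homogeneous Hamiltonian system $\cJ\bold y'-B(t)\bold y=0$, and uniqueness of the Cauchy problem for a first-order system with locally integrable coefficients forces $\bold y\equiv 0$, hence $y\equiv 0$. Therefore the equation is definite.

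For $(\mathrm{iv})\Rightarrow(\mathrm{i})$ I argue by contraposition. Suppose $\mu(\D>0)=0$, i.e. $\D(t)=0$ a.e. on $\cI$, and let $U$ be any operator satisfying \eqref{4.24.1}. Then $\D(t)y(t)=0$ holds automatically for every $y$, so $U$-definiteness reduces to the requirement that $y=0$ be the only solution of $l[y]=0$ with $U\bold y(a)=0$. The solution space of $l[y]=0$ is $2r$-dimensional, and through \eqref{4.6.1} the evaluation $y\mapsto\bold y(a)$ is a linear isomorphism of it onto $\bC^{2r}$. Since $\ran U=\bC^r$, the operator $U$ is surjective and $\dim\ker U=2r-r=r\ge 1$. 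Choosing a nonzero $\xi\in\ker U$ and letting $y$ be the solution with $\bold y(a)=\xi$, we obtain a nontrivial solution of $l[y]=0$ satisfying $U\bold y(a)=0$ and $\D(t)y(t)=0$ a.e. Hence the equation is not $U$-definite for any admissible $U$, which negates $(\mathrm{iv})$ and closes the cycle.

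The step I expect to require the most care is the transition in $(\mathrm{i})\Rightarrow(\mathrm{ii})$ from ``$y=0$ a.e. on $B_+$'' to the existence of a genuine point $t_0$ at which the whole vector $\bold y(t_0)$ vanishes. This is precisely the role of Lemma \ref{lem4.12}: it upgrades the almost-everywhere vanishing of $y$ to the vanishing of all quasi-derivatives on a set of full measure in $B_+$, and in particular at an honest point, so that the classical uniqueness theorem for the associated first-order system may be invoked. The remaining arguments are routine manipulations of the definitions together with an elementary dimension count.
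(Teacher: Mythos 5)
Your proof is correct and takes essentially the same route as the paper: the cycle (i)$\Rightarrow$(ii)$\Rightarrow$(iii)$\Rightarrow$(iv)$\Rightarrow$(i), with Lemma \ref{lem4.12} applied to $B=B_+$ yielding a nonempty set $B_0$ on which all quasi-derivatives vanish for (i)$\Rightarrow$(ii), and contraposition via a dimension count $\dim\ker U=r\geq 1$ for (iv)$\Rightarrow$(i). Your extra details---the explicit operator $U=(I_r,\,0)$ witnessing (iii)$\Rightarrow$(iv) and the explicit uniqueness argument through the first-order system of Proposition \ref{pr4.1}---merely spell out steps the paper leaves as obvious.
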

\begin{proof}
{\rm (\romannumeral 1) $\Rightarrow$ (\romannumeral 2)}. Assume that a function $y(\cd)\in\dom l$ satisfies $l[y]=0$ and $\D(t) y(t)=0$ (a.e. on $\cI$).  Then $y(t)=0, \; t\in B_+,$ and by Lemma \ref{lem4.12} there is a Borel set $B_0\subset B_+$ such that $\mu (B_+\setminus B_0)=0$ and $y^{[k]}(t)=0,\; t\in B_0, \; k\in\{0,1, \, \dots,\, 2r-1\}$. Since $\mu (B_+)>0$, it follows that $B_0\neq \emptyset$ and hence $y(t)=0, \; t\in\cI$. Thus the equation \eqref{4.5} is definite.

The implications {\rm (\romannumeral 2) $\Rightarrow$ (\romannumeral 3)} and {\rm (\romannumeral 3) $\Rightarrow$ (\romannumeral 4)} are obvious.

{\rm (\romannumeral 4) $\Rightarrow$ (\romannumeral 1)}. If $\mu(\D>0)=0$, then $\D(t)y(t)=0$ (a.e. on $\cI$) for each solution $y(\cd)$ of the equation $l[y]=0$ satisfying $U \bold y(a)=0$ and hence the equation \eqref{4.5} is not $U$-definite. This implies that $\mu (\D>0)\neq 0$.
\end{proof}
\begin{theorem}\label{th4.13}
In the case of a scalar differential equation \eqref{4.5} the corresponding minimal relation $\Smi$ is a densely defined operator  in $\gH'$
\end{theorem}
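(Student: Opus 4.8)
The plan is to reduce the whole statement to the single fact that $\mul\Sma=\{0\}$ and then extract this from Lemma \ref{lem4.12}. Recall the standard fact that a closed symmetric linear relation $A$ is a densely defined operator if and only if $\mul A^*=\{0\}$. Since $\Smi$ is closed and symmetric with $\Smi^*=\Sma$, it suffices to prove $\mul\Sma=\{0\}$. Equivalently, one may verify the two ingredients by hand: the inclusion $\Smi\subset\Smi^*=\Sma$ gives $\mul\Smi\subset\mul\Sma$, so $\mul\Sma=\{0\}$ forces $\Smi$ to be an operator, while $(\dom\Smi)^\perp=\mul\Smi^*=\mul\Sma=\{0\}$ yields density of $\dom\Smi$ in $\gH'$.

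To compute $\mul\Sma$, I would unwind the definitions. Since $\Sma=\wt\pi_\D\sma$, any element of $\mul\Sma$ has the form $\pi_\D f$ for some $\{y,f\}\in\sma$ with $\pi_\D y=0$; thus $y\in\dom l$, $l[y]=\D f$ a.e. on $\cI$, and $\D(t)y(t)=0$ a.e. As $\D>0$ on $B_+:=\{t\in\cI:\D(t)>0\}$, the last relation gives $y(t)=0$ a.e. on $B_+$, which is exactly the hypothesis of Lemma \ref{lem4.12} with $B=B_+$. That lemma then yields $y^{[k]}(t)=0$ a.e. on $B_+$ for all $k\in\{0,1,\dots,2r\}$; in particular $l[y]=y^{[2r]}=0$ a.e. on $B_+$. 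Combining the vanishing statements, on $B_+$ we get $\D f=l[y]=0$, while off $B_+$ we have $\D=0$; hence $\D(t)f(t)=0$ a.e. on $\cI$, i.e. $\pi_\D f=0$. Thus $\mul\Sma=\{0\}$, and the theorem follows. This argument needs no nontriviality of $\D$ (if $\D=0$ a.e. the space $\gH'$ is trivial and the claim is vacuous).

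The crux, and the only nonroutine step, is the implication ``$y=0$ a.e. on $B_+$'' $\Rightarrow$ ``all quasi-derivatives of $y$ up to order $2r$, in particular $l[y]=y^{[2r]}$, vanish a.e. on $B_+$,'' which is precisely Lemma \ref{lem4.12}. Once that lemma is available the present theorem is a bookkeeping consequence; the only care required is to apply it to the entire chain of quasi-derivatives (so that the top-order term $y^{[2r]}=l[y]$ is controlled) and then to use $l[y]=\D f$ together with $\D\equiv0$ on $\cI\setminus B_+$.
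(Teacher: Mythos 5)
Your proposal is correct and takes essentially the same route as the paper: the paper likewise reduces the theorem to showing $\mul\Sma=\{0\}$ (using that a closed symmetric relation is a densely defined operator iff its adjoint has trivial multivalued part, with $\Smi^*=\Sma$), and derives $\D(t)f(t)=0$ a.e.\ exactly as you do, by applying Lemma \ref{lem4.12} on $B_+=\{t\in\cI:\D(t)>0\}$ to conclude $l[y]=y^{[2r]}=0$ a.e.\ on $B_+$ and combining this with $\D\equiv 0$ off $B_+$. The only cosmetic difference is that the paper packages your key step as an intermediate statement (S) and phrases $\mul\Sma$ as $\pi_\D\cL''$ for the set $\cL''$ of right-hand sides, which is precisely your unwinding of $\wt\pi_\D\sma$.
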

\begin{proof}
Let for scalar equation \eqref{4.5} $B_0':=\cI\setminus B_+=\{t\in\cI: \D(t)=0\}$. Assume that  $y(\cd)\in\dom l$ and $\D(t)y(t)=0$ (a.e. on $\cI$). Then obviously $y(t)=0$ (a.e. on $B_+$) and by Lemma \ref{lem4.12}  the following statement is valid:

{\rm (S)} If $y(\cd)\in\dom l$ and $\D(t)y(t)=0$ (a.e. on $\cI$), then  $l[y]=0$ (a.e. on $B_+$).

Let $\cL''$ be the set of all functions $f(\cd)\in\cL_\D^2(\cI;\bC)$ such that there exists a solution $y(\cd)\in \dom l$ of \eqref{4.6} satisfying $\D(t)y(t)=0$ (a.e. on  $\cI$). In view of statement (S) for each $f(\cd)\in\cL''$ one has $\D(t)f(t)=0$ (a.e. on $B_+$). This and the equality $\D(t)f(t)=0, \; t\in B_0',$ imply that $\D(t)f(t)=0$ (a.e. on $\cI$) and hence
\begin{gather}\label{4.25}
\pi_\D f(\cd)=0, \quad f(\cd)\in \cL''.
\end{gather}
Since obviously $\mul\Sma=\pi_\D \cL''$, it follows from \eqref{4.25} that $\mul\Sma=\{0\}$. This yields the required statement.
\end{proof}
\begin{theorem}\label{th4.14}
Let for   scalar equation \eqref{4.5}  the weight function $\D(t)$  satisfies \eqref{4.24} and  let $U\in \B(\bC^{2r},\bC^r)$ be an operator satisfying \eqref{4.24.1}.  Then the set of spectral functions $\s(s)(\in \B(\bC^r))$ of this equation (with respect to the Fourier transform \eqref{4.18}) is not empty and  there exists a Nevanlinna operator-function \eqref{3.33} (with $p=r$) such that the equalities \eqref{3.36} and \eqref{3.37} give a bijective correspondence $\s(\cd)=\s_\tau (\cd)$ between all (arbitrary) functions $\tau=\tau(\cd)\in \wt R(\bC^{d-r})$ and all spectral functions $\s(\cd)$ of \eqref{4.5}. Moreover, a spectral function $\s_\tau(\cd)$ is orthogonal if an only if $\tau(\l)\equiv\t(=\t^*), \; \l\in\CR$.
\end{theorem}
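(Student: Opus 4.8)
The plan is to reduce the whole statement to the matrix parametrization theorem, Theorem \ref{th4.8}, which becomes available once its hypotheses (A1$'$) and (A2$'$) are verified, and then to exploit the scalar structure to show that the multivalued obstruction collapses. First I would check the hypotheses. Assumption (A1$'$) holds automatically for scalar equations, as observed just before Lemma \ref{lem4.12}, so $N_+=N_-=d$ with $r\leq d\leq 2r$. For (A2$'$), the nontriviality assumption \eqref{4.24} together with Proposition \ref{pr4.12.1} shows that equation \eqref{4.5} is $U$-definite for every $U$ satisfying \eqref{4.24.1}, in particular for the prescribed $U$. Hence Theorem \ref{th4.8} applies with $p=r$ and parameters $\tau\in\wt R(\bC^{d-r})$, furnishing the Nevanlinna function $M$ of the form \eqref{3.33} and the maps \eqref{3.36}, \eqref{3.37}.

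The crux is to prove that $\mul S=\mul S^*=\{0\}$; this single fact will remove both restrictions appearing in Theorem \ref{th4.8}. Here the scalar hypothesis enters decisively: by Theorem \ref{th4.13} one has $\mul\Sma=\{0\}$ (indeed, $\Smi$ is there shown to be a densely defined operator). Since $\Smi\subset S$ (compare \eqref{4.15}), passing to adjoints reverses the inclusion and gives $S^*\subset\Smi^*=\Sma$, so that $\mul S^*\subset\mul\Sma=\{0\}$; and since $S$ is symmetric, $\mul S\subset\mul S^*=\{0\}$. Thus $\mul S=\mul S^*=\{0\}$, as required.

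With this established I would invoke Theorem \ref{th4.8} twice. The equality $\mul S=\mul S^*$ implies, by the final assertion of part (i), that every $\tau\in\wt R(\bC^{d-r})$ satisfies the admissibility condition \eqref{3.37.1}; consequently \eqref{3.36} and \eqref{3.37} set up a bijection between all of $\wt R(\bC^{d-r})$ and all pseudospectral functions. The equality $\mul S=\{0\}$ then gives, by part (ii), that the set of spectral functions is nonempty and coincides with the set of pseudospectral functions, so the very same bijection describes all spectral functions $\s(\cd)=\s_\tau(\cd)$. The orthogonality criterion is inherited from the associated system \eqref{4.8}: by Assertion \ref{ass4.7} the spectral function of \eqref{4.5} and the (pseudo)spectral function of \eqref{4.8} attached to the same $\tau$ coincide as distribution functions, so one is orthogonal precisely when the other is, and Theorem \ref{th3.12}, (ii), characterizes orthogonality of the latter by $\tau(\l)\equiv\t(=\t^*)$. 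The main obstacle is thus concentrated entirely in the identity $\mul S=\mul S^*=\{0\}$: it is exactly the vanishing of this multivalued part, forced by the scalar structure through Lemma \ref{lem4.12} and Theorem \ref{th4.13}, that is unavailable in the general matrix setting and that makes the scalar case qualitatively stronger.
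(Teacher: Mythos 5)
Your proof is correct and takes essentially the same route as the paper: verify (A1$'$) and (A2$'$) via Proposition \ref{pr4.12.1}, deduce $\mul S=\mul S^*=\{0\}$ from Theorem \ref{th4.13}, and conclude by Theorem \ref{th4.8}. Your only deviations are cosmetic --- you obtain $\mul S^*=\{0\}$ from the adjoint inclusion $S^*\subset\Smi^*=\Sma$ rather than from density of $\dom S$ as a symmetric extension of the densely defined $\Smi$, and you make explicit the orthogonality criterion (via Assertion \ref{ass4.7} and Theorem \ref{th3.12}, (ii)), which the paper leaves implicit in its appeal to Theorem \ref{th4.8}.
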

\begin{proof}
First observe that by Proposition \ref{pr4.12.1} the equation \eqref{4.5} is $U$-definite and hence  the assumptions (${\rm A1^\prime}$) and (${\rm A2'}$) in Theorem \ref{th4.8} are satisfied. Next, the relation $S$ (see \eqref{4.15})  is a symmetric extension of $\Smi$ and  by Theorem \ref{th4.13} $\Smi$ is a densely defined operator.  Therefore  $S$ is a densely defined operator as well and  hence
\begin{gather}\label{4.29}
\mul S=\mul S^*=\{0\}.
\end{gather}
Now the required statement follows from Theorem \ref{th4.8}.
\end{proof}
In the following theorem we provide sufficient conditions  for the uniform convergence of integrals in \eqref{4.22} with a spectral function $\s(\cd)$ of the scalar equation.
\begin{theorem}\label{th4.15}
Let for scalar differential equation \eqref{4.5} the assumptions of Theorem \ref{th4.14} be satisfied and let the assumption ${\rm (A3')}$ in Theorem \ref{th4.11} be fulfilled.  Moreover, let $\tau=\tau(\cd)\in \wt R(\bC^{d-r})$, let $\s(\cd)=\s_\tau(\cd)$ be the corresponding spectral function of \eqref{4.5} (see Theorem \ref{th4.14}) and let $\eta_\tau\in \C (\bC^{d-r})$ be the linear relation defined in Theorem \ref{th2.9}. Denote by $\cF$ the set of all functions $y(\cd)\in\dom l\cap \cL_\D^2(\cI;\bC)$ satisfying the equality $l[y]=\D(t) f_y(t)$ (with some $f_y(\cd)\in\cL_\D^2(\cI;\bC)$) and the  boundary conditions
\begin{gather}\label{4.30}
U\bold y(a)=0, \qquad \{G_{0b}y(\cd),- G_{1b}y(\cd) \}\in\eta_\tau.
\end{gather}
Then for each function $y(\cd)\in\cF$ statements {\rm (i)} and  {\rm (ii)} of Theorem \ref{th4.11} hold.
\end{theorem}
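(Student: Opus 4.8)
The plan is to deduce the assertion directly from Theorem \ref{th4.11}, more precisely from its last (``spectral'') assertion, by checking that in the scalar setting all the required hypotheses are in force. First I would verify the assumptions. Assumption ${\rm (A1')}$ holds automatically for scalar equations, as recorded just before Lemma \ref{lem4.12}. Assumption ${\rm (A2')}$ follows from Proposition \ref{pr4.12.1}: the nontriviality condition \eqref{4.24} assumed here is equivalent to $U$-definiteness of \eqref{4.5} for any $U$ satisfying \eqref{4.24.1}, which is exactly what ${\rm (A2')}$ requires. Assumption ${\rm (A3')}$ is part of the hypotheses. Hence the entire apparatus of Theorem \ref{th4.11} is available with $m=1$, $p=mr=r$, and the parameter $\tau\in\wt R(\bC^{d-r})$.

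The key step is to certify that one may pass to the spectral (rather than merely pseudospectral) version of Theorem \ref{th4.11}, i.e. that $\cK'\subset\ker\pi_\D$, equivalently $\mul S=0$. This is supplied by Theorem \ref{th4.14} together with its proof: Theorem \ref{th4.13} shows that the minimal relation $\Smi$ is a densely defined operator, so its symmetric extension $S$ from \eqref{4.15} is densely defined as well, giving $\mul S=\mul S^*=\{0\}$ (see \eqref{4.29}). Consequently, by Theorem \ref{th4.8}{\rm (i)} every $\tau\in\wt R(\bC^{d-r})$ satisfies the admissibility condition \eqref{3.37.1}, and by Theorem \ref{th4.14} the associated $\s(\cd)=\s_\tau(\cd)$ is indeed a spectral function. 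Thus the final clause of Theorem \ref{th4.11} applies: statements {\rm (i)} and {\rm (ii)} hold for every $y(\cd)\in\dom\cS_*$ satisfying the boundary condition $\{G_{0b}y(\cd),-G_{1b}y(\cd)\}\in\eta_\tau$.

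It then remains to identify $\cF$ with this class. Since $m=1$ we have $\lIW=\cL_\D^2(\cI;\bC)$, and comparing the definition of $\cF$ with the description \eqref{4.15.0} of $\dom\cS_*$ shows that $\cF$ consists precisely of those $y(\cd)\in\dom\cS_*$ (namely $y(\cd)\in\dom l\cap\cL_\D^2(\cI;\bC)$ with $l[y]=\D f_y$ for some $f_y(\cd)\in\cL_\D^2(\cI;\bC)$ and $U\bold y(a)=0$) that in addition satisfy $\{G_{0b}y(\cd),-G_{1b}y(\cd)\}\in\eta_\tau$; this is the first boundary relation in \eqref{4.30}. Hence every $y(\cd)\in\cF$ belongs to the class covered by the spectral part of Theorem \ref{th4.11}, and statements {\rm (i)} and {\rm (ii)} follow at once. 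I do not anticipate a genuine obstacle, since the argument is a reduction; the only points demanding care are the verification, via Theorems \ref{th4.13} and \ref{th4.14}, that the scalar hypotheses force $\mul S=0$ (so that the spectral version is legitimately invoked) and the exact matching of $\cF$ with $\dom\cS_*$ through \eqref{4.15.0}.
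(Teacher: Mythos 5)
Your proposal is correct and takes essentially the same route as the paper's proof: establish $U$-definiteness via Proposition \ref{pr4.12.1} so that assumptions ${\rm (A1')}$--${\rm (A3')}$ hold, use $\mul S=\mul S^*=\{0\}$ (equality \eqref{4.29}, coming from Theorem \ref{th4.13} through the proof of Theorem \ref{th4.14}) to invoke the final spectral clause of Theorem \ref{th4.11}, and identify $\cF$ with the set of $y(\cd)\in\dom\cS_*$ satisfying $\{G_{0b}y(\cd),-G_{1b}y(\cd)\}\in\eta_\tau$. Only a trivial wording slip: the $\eta_\tau$-condition is the second, not the first, relation in \eqref{4.30} (the first, $U\bold y(a)=0$, is already absorbed into $\dom\cS_*$ via \eqref{4.15.0}), which is in substance exactly what you argue.
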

\begin{proof}
First observe that by Proposition \ref{pr4.12.1} the equation \eqref{4.5} is $U$-definite and hence the assumptions (${\rm A1'}$) -- (${\rm A3'}$) in Theorems \ref{th4.8} and \ref{th4.11} are satisfied.

Assume that $y(\cd)\in\cF$. Then $y(\cd)\in\dom S_*$ (see \eqref{4.15.0}) and $\{G_{0b}y(\cd),- G_{1b}y(\cd) \}\in\eta_\tau$. Moreover, by \eqref{4.29} $\mul S=\{0\}$. This and the last statement in Theorem \ref{th4.11} yield the required statement.
\end{proof}
Next consider scalar regular equation \eqref{4.5} on an interval $\cI=[a,b]$ (see Definition \ref{def4.0}).  Clearly for such equation one has $d(=N_\pm)=2r$.

Let $U\in\B (\bC^{2r},\bC^r)$ be an operator satisfying \eqref{4.24.1}. Then there exists an operator $U'\in\B (\bC^{2r},\bC^r)$  such that the operator $\wt U=(U', U)^\top \in \B (\bC^{2r})$ satisfies $\wt U^* \cJ \wt U =\cJ$. Let as before $\f_U(\cd,\l)(\in \B (\bC^r,\bC))$ be an operator solution of \eqref{4.5} satisfying $\pmb\f_U(a,\l)=-\cJ U^*$ and let $\psi (\cd, \l)$ be similar solution with $\pmb\psi(a,\l)=\cJ (U')^*$. Clearly, $\f_U(\cd,\l)$ and $\psi (\cd, \l)$ are components of the solution $Y(t,\l)= (\f_U(t,\l), \,\psi(t,\l) )(\in\B(\bC^r\oplus \bC^r, \bC))$ of \eqref{4.5} satisfying $\wt U \bold Y(a,\l)=I_{2r}$.

Below with  a function $\tau(\cd)\in \wt R(\bC^r)$ represented in the ''canonical'' form \eqref{2.9} we associate a pair of operator functions $C_{j\tau}(\cd):\CR\to \B (\bC^r), \; j\in \{0,1\},$ given by
\begin{gather}\label{4.33.1}
C_{0\tau}(\l)={\rm diag}\, (-\tau_0(\l), I_\cK), \qquad C_{1\tau}(\l)={\rm diag}\, (I_{\cH_0}, 0), \quad \l\in\CR.
\end{gather}
It is easy to see that
\begin{gather*}
\tau(\l)=\{\{h,h'\}\in\bC^r\oplus \bC^r: C_{0\tau}(\l)h+C_{1\tau}(\l)h'=0 \}, \quad \l\in\CR.
\end{gather*}
In the case of a regular equation \eqref{4.5} Theorem \ref{th4.14} can be reformulated in the form of the following theorem.
\begin{theorem}\label{th4.18.2}
Let for regular scalar equation \eqref{4.5} the assumptions of Theorem \ref{th4.14} be satisfied and let $w_j(\l)(\in \B(\bC^r))$ be the operator functions given by
\begin{gather}
w_1(\l)= (\f_U(b,\l), \,\f_U^{[1]}(b,\l),\, \dots,\,  \f_U^{[r-1]}(b,\l) )^\top  \label{4.33.2}\\
w_2(\l)= (\psi(b,\l), \,\psi^{[1]}(b,\l),\, \dots,\,  \psi^{[r-1]}(b,\l) )^\top  \\
w_3(\l)= (\f_U^{[2r-1]}(b,\l), \,\f_U^{[2r-2]}(b,\l), \,\dots, \, \f_U^{[r]}(b,\l) )^\top  \\
w_4(\l)= (\psi^{[2r-1]}(b,\l), \,\psi^{[2r-2]}(b,\l), \,\dots, \,  \psi^{[r]}(b,\l) )^\top . \label{4.33.3}
\end{gather}
 Then the equality
\begin{gather}\label{4.33.5}
m_\tau(\l)=(C_{0\tau}(\l)w_1(\l)+C_{1\tau}(\l)w_3(\l))^{-1}(C_{0\tau} (\l)w_2(\l)+C_{1\tau}(\l)w_4(\l))
\end{gather}
together with \eqref{3.37} gives a bijective correspondence $\s(\cd)=\s_\tau (\cd)$ between all  functions $\tau=\tau(\cd)\in \wt R(\bC^{r})$ and all spectral functions $\s(\cd)$ of \eqref{4.5} (with respect to the Fourier transform \eqref{4.18}).
\end{theorem}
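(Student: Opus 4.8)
The plan is to derive Theorem~\ref{th4.18.2} from Theorem~\ref{th4.14} by recognizing that the linear--fractional expression \eqref{4.33.5} is nothing but an explicit evaluation, in terms of boundary values at $b$, of the very Weyl function $m_\tau(\cd)$ that already drives the parametrization in Theorem~\ref{th4.14} through \eqref{3.36}. In the regular scalar case one has $d=2r$, hence $d-r=r$, and by \eqref{4.29} $\mul S=\mul S^*=\{0\}$; thus by Theorem~\ref{th4.8}, (i) every $\tau\in\wt R(\bC^r)$ is admissible and Theorem~\ref{th4.14} already gives the bijection $\s(\cd)=\s_\tau(\cd)$ onto all spectral functions via \eqref{3.36} and \eqref{3.37}. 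Consequently it suffices to prove, for each $\tau\in\wt R(\bC^r)$ and each $\l\in\CR$, that the right-hand side of \eqref{4.33.5} coincides with $m_\tau(\l)$ as given by \eqref{3.36}; once this equality of $m$-functions is established, the Perron--Stieltjes formula \eqref{3.37} applied to the common $m_\tau(\cd)$ produces the same $\s_\tau(\cd)$ and the whole correspondence of Theorem~\ref{th4.14} transfers verbatim.

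By Remark~\ref{rem3.12.2} the function defined by \eqref{3.36} is exactly the $m$-function (Weyl function) of the equation associated with $\tau$, so I would prove \eqref{4.33.5} by recomputing this Weyl function directly from its characterizing boundary condition at $b$. Recall from Proposition~\ref{pr3.9} and Remark~\ref{rem4.11.1}, (i) that in the regular case the boundary triplet for $S^*$ is $\{\bC^r,\G_0,\G_1\}$ with $\G_0=G_{0b}$, $\G_1=-G_{1b}$, where $G_{0b}y=y(b)\oplus y^{[1]}(b)\oplus\dots\oplus y^{[r-1]}(b)$ and $G_{1b}y=y^{[2r-1]}(b)\oplus\dots\oplus y^{[r]}(b)$. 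Comparing these expressions with the definitions \eqref{4.33.2}--\eqref{4.33.3} of the blocks $w_j(\l)$ gives immediately $G_{0b}\f_U=w_1$, $G_{0b}\psi=w_2$, $G_{1b}\f_U=w_3$ and $G_{1b}\psi=w_4$. Here $\f_U$ is the solution carrying the homogeneous boundary condition at $a$ built into $S$, and $\psi$ is the complementary solution; the Weyl function $m_\tau$ is then characterized (via \cite[Proposition 4.9]{Mog15}) by the requirement that the Weyl solution $v(\cd,\l)=\f_U(\cd,\l)m_\tau(\l)-\psi(\cd,\l)$ satisfy the $b$-boundary condition prescribed by $\tau(\l)$.

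The remaining step is a short matricial computation. Using the identity displayed just after \eqref{4.33.1}, namely $\tau(\l)=\{\{h,h'\}: C_{0\tau}(\l)h+C_{1\tau}(\l)h'=0\}$, the $b$-boundary condition for $v(\cd,\l)$ takes the form $C_{0\tau}(\l)G_{0b}v+C_{1\tau}(\l)G_{1b}v=0$. Substituting $v=\f_U m_\tau-\psi$ and inserting $G_{0b}\f_U=w_1$, $G_{1b}\f_U=w_3$, $G_{0b}\psi=w_2$, $G_{1b}\psi=w_4$ collapses this to the single equation
\begin{gather*}
(C_{0\tau}(\l)w_1(\l)+C_{1\tau}(\l)w_3(\l))\,m_\tau(\l)=C_{0\tau}(\l)w_2(\l)+C_{1\tau}(\l)w_4(\l).
\end{gather*}
Existence and uniqueness of the Weyl solution for $\l\in\CR$ (equivalently, invertibility of the coefficient $C_{0\tau}(\l)w_1(\l)+C_{1\tau}(\l)w_3(\l)$, which is part of the well-posedness in \cite{Mog15}) then lets me solve for $m_\tau(\l)$, yielding precisely \eqref{4.33.5}. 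As a sanity check, for $r=1$ one has $C_{0\tau}=-\tau$, $C_{1\tau}=1$, $w_1=\f_U(b)$, $w_2=\psi(b)$, $w_3=\f_U^{[1]}(b)$, $w_4=\psi^{[1]}(b)$, and \eqref{4.33.5} reduces exactly to the classical Kats--Krein expression \eqref{1.10}.

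I expect the main obstacle to be the sign and convention bookkeeping in the Weyl-solution characterization. One has to combine three competing sign sources---the choice $\G_1=-G_{1b}$ in the boundary triplet of Proposition~\ref{pr3.9}, the passage to $A_{-\tau(\l)}$ in Theorem~\ref{th2.6}, (ii), and the normalization of $\psi$ at $a$---and verify that together they produce the combination $C_{0\tau}w_1+C_{1\tau}w_3$ with exactly the signs written in \eqref{4.33.5} rather than a reflected variant. This amounts to matching the present conventions against the construction of $M(\l)$ in \cite[Proposition 4.9]{Mog15}; the $r=1$ reduction to \eqref{1.10} serves as a final confirmation that the signs are correct. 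A secondary point worth emphasizing is that the $C_{0\tau},C_{1\tau}$ encoding handles relation-valued $\tau$ (with nontrivial multivalued part $\cK$) uniformly, since $C_{0\tau}(\l)w_1(\l)+C_{1\tau}(\l)w_3(\l)$ stays meaningful even where the resolvent-type inverse $(\tau(\l)+M_4(\l))^{-1}$ in \eqref{3.36} degenerates.
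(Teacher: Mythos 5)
Your proposal is correct, and its skeleton coincides with the paper's: in both, regularity gives $d=2r$, and \eqref{4.29} ($\mul S=\mul S^*=\{0\}$) makes every $\tau\in\wt R(\bC^r)$ admissible, so the bijection of Theorem \ref{th4.14} is already in place and everything reduces to identifying the right-hand side of \eqref{4.33.5} with $m_\tau(\l)$ from \eqref{3.36}. Where you genuinely differ is in how that identification is obtained. The paper does not compute: it passes to the Hamiltonian system \eqref{4.8}, quotes \cite{Sah13,Mog15} for the ready-made fact that \eqref{3.36} can be rewritten as \eqref{4.33.5} with $w_j$ the boundary values at $b$ of the system solutions $\bm\f_U,\bm\psi$, converts these into the quasi-derivative blocks \eqref{4.33.2}--\eqref{4.33.3} via Proposition \ref{pr4.1}, (i), and transfers the spectral-function correspondence from system to equation by Assertion \ref{ass4.7}. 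You instead re-derive the formula from the Weyl-solution characterization $v=\f_U m_\tau-\psi$ with the $b$-condition $C_{0\tau}(\l)G_{0b}v+C_{1\tau}(\l)G_{1b}v=0$; your sign bookkeeping is in fact consistent (with $\G_0=G_{0b}$, $\G_1=-G_{1b}$ from \eqref{3.32} and the $A_{-\tau(\l)}$ convention of Theorem \ref{th2.6}, (ii), the condition $\{G_{0b}v,-G_{1b}v\}\in-\tau(\l)$ is exactly the displayed one), and your $r=1$ reduction does recover \eqref{1.10}, confirming the conventions. What each route buys: yours is more self-contained algebraically and handles relation-valued $\tau$ transparently through the pair $C_{0\tau},C_{1\tau}$, but the two steps you still take on faith from \cite{Mog15} --- that the $m_\tau$ of \eqref{3.36} admits this Weyl-solution characterization, and that the coefficient block $C_{0\tau}(\l)w_1(\l)+C_{1\tau}(\l)w_3(\l)$ is invertible for $\l\in\CR$ --- are precisely the content the paper imports wholesale by its citation, so the two arguments rest on the same external foundation and yours is no less rigorous than the paper's own. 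Your shortcut of invoking Theorem \ref{th4.14} directly, instead of re-running Theorem \ref{th3.12} and Proposition \ref{pr3.13} at the system level and then applying Assertion \ref{ass4.7}, is also legitimate, since Theorem \ref{th4.14} already encapsulates that transfer.
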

\begin{proof}
Consider the Hamiltonian system \eqref{4.8} corresponding to the equation \eqref{4.5} (see Proposition \ref{pr4.1}). Let $T$ be  symmetric relation \eqref{3.22.1} for system \eqref{4.8} and let $S$ be symmetric relation \eqref{4.15} for equation \eqref{4.5}. Then by \eqref{4.29} and Proposition \ref{pr4.1}  $\mul T=\mul T^*=\{0\}$ and  by Theorem \ref{th3.12} and Proposition \ref{pr3.13} the equalities \eqref{3.36} and \eqref{3.37} give a parametrization of all spectral functions $\s(\cd)$ of \eqref{4.8} in terms of functions $\tau(\cd)\in \wt R (\bC^r)$.

Let $\bm\f_U(t,\l)=(\bm\f_{0U}(t,\l),\, \bm\f_{1U}(t,\l))^\top$ and $\bm\psi(t,\l)=(\bm\psi_{0}(t,\l),\, \bm\psi_{1}(t,\l))^\top$ be $\B (\bC^r,\bC^r\oplus \bC^r)$-valued operator solutions of \eqref{4.8} with the initial values $\bm\f_U(a,\l)=-\cJ U^* $ and $\bm\psi(a,\l)=\cJ (U')^* $. Then according to \cite{Sah13,Mog15} the equality \eqref{3.36} can be written in the form \eqref{4.33.5} with $w_1(\l)=\bm\f_{0U}(b,\l)$, $w_2(\l)=\bm\psi_{0}(b,\l)$, $w_3(\l)=\bm\f_{1U}(b,\l)$ and $w_4(\l)=\bm\psi_{1}(b,\l)$. Moreover, by Proposition \ref{pr4.1}, (i) $w_j(\l)$ admit the representation \eqref{4.33.2} -- \eqref{4.33.3} and Assertion \ref{ass4.7} yields the required statement.
\end{proof}
\subsection{Scalar Sturm - Liouville equations}\label{sub5.4}
The results of this section take an  especially simple form in the case $m=1$ and  $r=1$, i.e., in the case of the scalar Sturm -Liouville equation \eqref{1.7}. Below we give the proof of Theorem \ref{th1.2} concerning this equation.
\begin{proof}
(i) It is clear that the operators $U = (-\cos a, -\sin \a)$ and $U'=(-\sin \a, \cos a)$ satisfy the assumptions before Theorem \ref{th4.18.2} and the corresponding solutions  $\f_u(\cd,\l)=\f(\cd,\l)$ and $\psi (\cd,\l)$  of \eqref{1.7} are defined by initial values specified in the theorem. This and Theorem \ref{th4.18.2} give statement (i).

(ii) In view of \eqref{2.13} the linear relation $\eta_\tau$ in $\bC$ is defined as follows:

(1) if $\lim\limits_{y\to\infty} \frac {\tau(iy)} {iy}\neq 0$, then $\eta_\tau=\{0\}\oplus\bC$;

(2) if \eqref{1.19} holds, then $\eta_\tau=h\oplus(-D_\tau h), \; h\in\bC,$ with $D_\tau=\lim\limits_{y\to\infty}\tau(iy)$;

(3) if $\lim\limits_{y\to\infty} \frac {\tau(iy)} {iy}= 0$ and $ \lim\limits_{y\to\infty} y\im \tau(iy)=\infty$, then $\eta_\tau=\{0\}$.

\noindent Note also that according to Remark \ref{rem4.11.1} one can put in \eqref{4.30} $G_{0b} y=y(b)$ and $G_{1b} y=y^{[1]}(b) $. Now statement (ii) follows from Theorem \ref{th4.15}.
\end{proof}
For given $\a,\b\in\bR$ consider the eigenvalue problem \eqref{1.7}, \eqref{1.13} (cf.  Theorem \ref{th1.1}).   We assume that $p,q$ and $\D$ in \eqref{1.7}  are real-valued functions on a compact interval $I=[a,b]$ such that $\tfrac 1 p, q$ and $\D$ are integrable on $\cI$ and $\D(t)\geq 0, \; t\in\cI$ (we do not assume that $\D(t)>0,\; t\in\cI$). A function $y\in \dom l$ is called a solution of the problem \eqref{1.7}, \eqref{1.13} if $l[y]=\l \D(t)y$ (a.e. on $\cI$) and \eqref{1.13} is satisfied. The set of all solutions of this problem will be denoted by $L_\l$ (it is clear that $L_\l$ is a finite-dimensional subspace in $\cL_\D^2(\cI;\bC)$). Denote also  by $EV$ the set of all eigenvalues of the problem \eqref{1.7}, \eqref{1.13}, i.e., the set of all $\l\in\bC$ such that $L_\l\neq\{0\}$. For each $\l\in EV$ the subspace $L_\l\subset \cL_\D^2(\cI;\bC)$ is called an eigenspace and a function $y\in L_\l$ is called an eigenfunction.
\begin{corollary}\label{cor4.18.5}
Let the weight function $\D(\cd)$ in \eqref{1.7} satisfies $\mu(\D>0)\neq 0$. Then:

{\rm (i)} $EV$ is an infinite countable subset  in $\bR$ without finite limit points and $\dim L_\l=1, \; \l\in EV$.

{\rm (ii)} If in addition $p(t)\geq 0, \,t\in\cI, $ then the set  $EV$ has properties from statement {\rm (i)}  and, moreover, it  is bounded from below (the latter means that  there exists $\l_0\in EV$ such that $\l_0\leq \l, \; \l\in EV$).

{\rm (iii)} Let $\{\l_k\}_1^\infty$ be a sequence of all   eigenvalues $\l_k\in EV$ and let $v_k\in L_{\l_k}$ be an eigenfunction with $ ||v_k||_{\cL_\D^2(\cI;\bC)}=1, \; k\in \bN$. Denote by $\cF'$ the set of all functions $y\in\dom l $ such that $l[y]= \D f_y$ (a.e. on $\cI$) with some $f_y\in\cL_\D^2(\cI;\bC)$ and the boundary conditions \eqref{1.13} are satisfied. Then each function $y\in\cF'$ admits an eigenfunction expansion \eqref{1.14}, which converges  absolutely and uniformly on $\cI$.
\end{corollary}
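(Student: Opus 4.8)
The plan is to realize the self-adjoint boundary value problem \eqref{1.7}, \eqref{1.13} as the self-adjoint operator in $\gH'=L_\D^2(\cI;\bC)$ attached to a \emph{constant} self-adjoint boundary parameter, and then to read off (i)--(iii) from the spectral theorem together with the uniform-convergence Theorem \ref{th4.15}. First I would take $U=(-\cos\a,-\sin\a)$ as in the proof of Theorem \ref{th1.2}, so that $U\bold y(a)=0$ encodes the condition at $a$, and rewrite the condition at $b$ as $\{G_{0b}y,-G_{1b}y\}\in\eta_\tau$ with $G_{0b}y=y(b),\ G_{1b}y=y^{[1]}(b)$ (Remark \ref{rem4.11.1}), for the constant parameter $\tau(\l)\equiv\t$ with $\t\in\bR$ determined by $\b$ (and $\tau\equiv\infty$ when $\sin\b=0$). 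By the computation of $\eta_\tau$ in the proof of Theorem \ref{th1.2} (based on Theorem \ref{th2.9}) this is precisely the condition in \eqref{1.13}. Since $\tau$ is constant and self-adjoint, Theorem \ref{th4.14} makes $\s=\s_\t$ an \emph{orthogonal} spectral function, and by Theorem \ref{th2.6} the corresponding extension $\wt T=S_{-\t}$ is a self-adjoint \emph{operator} in $\gH'$ (here $\mul S=\{0\}$ by \eqref{4.29}). The problem \eqref{1.7}, \eqref{1.13} is then exactly the eigenvalue problem for $\wt T$: $\l\in EV$ iff $\l$ is an eigenvalue of $\wt T$, and $\pi_\D$ maps $L_\l$ onto the corresponding eigenspace injectively, since $\D y=0$ a.e. for $y\in L_\l$ would force $y=0$ by definiteness (Proposition \ref{pr4.12.1}).

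For (i) I would show that $\wt T$ has compact resolvent. Because the equation is regular (Definition \ref{def4.0}), for nonreal $\l$ the resolvent $(\wt T-\l)^{-1}$ is the integral operator with the classical Green kernel $G(t,u,\l)$ built from $\f(\cd,\l)$ and $\psi(\cd,\l)$; this kernel is continuous on the compact square $\cI\tm\cI$, and since $\int_\cI||\D||\,dt<\infty$ the operator $f\mapsto\int_\cI G(\cd,u,\l)\D(u)f(u)\,du$ is Hilbert--Schmidt on $\gH'$ (its Hilbert--Schmidt norm is bounded by $(\sup|G|)\int_\cI||\D||\,dt$), hence compact. Thus $\wt T$ is self-adjoint with discrete real spectrum, no finite accumulation point, and finite multiplicities. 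As $\mu(\D>0)\neq0$ the space $\gH'$ is infinite dimensional (it is isometric to $L^2$ of the nonatomic finite measure $\D\,dt$ on $\{\D>0\}$), so $\wt T$ has infinitely many eigenvalues; this gives that $EV$ is infinite, countable, real, without finite limit point. Finally $\dim L_\l=1$ for $\l\in EV$ follows from a Wronskian count: the condition at $a$ reduces the two-dimensional solution space of \eqref{1.7} to the span of $\f(\cd,\l)$, and the condition at $b$ then holds or fails.

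For (ii) I would pass to the quadratic form of $\wt T$. Integrating $\int_\cI l[y]\,\ov y\,dt$ by parts yields, for $y$ in the operator domain, $(\wt T\,\pi_\D y,\pi_\D y)_\D=\int_\cI\bigl(p|y'|^2+q|y|^2\bigr)\,dt+y^{[1]}(a)\ov{y(a)}-y^{[1]}(b)\ov{y(b)}$, where the boundary terms are controlled through \eqref{1.13}; with $p(t)\geq0$ the term $\int_\cI p|y'|^2\,dt$ is nonnegative. \textbf{The semiboundedness estimate for a genuinely semi-definite weight is the main obstacle}: the norm of $\gH'$ only sees $y$ on $\{\D>0\}$, so $\int_\cI q|y|^2\,dt$ cannot be dominated by $||y||_\D^2$ directly. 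The route I would take is to exploit that every domain function satisfies $l[y]=0$ a.e. on $\{\D=0\}$ (because $l[y]=\D f_y$), which pins $y$ down on the degeneracy set and lets one absorb the contribution of $q|y|^2$ over $\{\D=0\}$ after integration by parts. Once $\wt T\geq -C$ is established, the discrete spectrum is bounded below and $EV$ has a least element.

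For (iii) the spectral theorem for the self-adjoint $\wt T$ with compact resolvent shows that $\{\pi_\D v_k\}_1^\infty$ is an orthonormal basis of $\gH'$, whence $\pi_\D y=\sum_k(y,v_k)_\D\,\pi_\D v_k$ converges in $L_\D^2(\cI;\bC)$ for every $y\in\cF'$. To upgrade this to the pointwise identity \eqref{1.14} with absolute and uniform convergence I would identify the series with the inverse Fourier transform \eqref{1.22}: for the orthogonal $\s_\t$ the measure $d\s$ is purely atomic, supported on $\{\l_k\}$ with atom $||\f(\cd,\l_k)||_\D^{-2}$ at $\l_k$, and a direct evaluation turns $\int_\bR\f(t,s)\,d\s(s)\wh y(s)$ into $\sum_k(y,v_k)_\D v_k(t)$. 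Since any $y\in\dom l$ on the compact interval is bounded and hence lies in $\cL_\D^2(\cI;\bC)$, the set $\cF'$ coincides with the class $\cF$ of Theorem \ref{th4.15} for this $\tau\equiv\t$ (its conditions being $l[y]=\D f_y$ with $f_y\in\cL_\D^2(\cI;\bC)$ together with \eqref{1.13}); Theorem \ref{th4.15}, via Theorem \ref{th4.11}, then yields both the pointwise identity and the absolute and uniform convergence of the integral on $\cI=[a,b]$, which is exactly \eqref{1.14}.
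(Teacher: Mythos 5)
Your parts (i) and (iii) are correct, but you reach (i) by a genuinely different mechanism than the paper. The paper never touches compactness of the resolvent: it takes the constant parameter $\tau(\l)\equiv\t$, observes from \eqref{1.10} that $m_\tau$ is a quotient of entire functions, hence meromorphic with a real pole set $\cP=\{\l_k\}$ having no finite limit points, so that $\s_\tau$ is a pure jump function; it then uses the unitary equivalence of $\wt S_\tau$ with the multiplication operator $\L_\s$ in $L^2(\s;\bC)$ (the Fourier transform $V_\s$ is unitary here) to identify $\Si(\wt S_\tau)$ with $\cP$, all eigenvalues simple, and finally gets $EV=\Si(\wt S_\tau)$ through $\ker(\wt S_\tau-\l)=\pi_\D L_\l$ and the injectivity \eqref{4.33.20} supplied by $U$-definiteness. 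Your Green-kernel route is a sound classical substitute: the kernel is continuous on the compact square, $\int_\cI\D\,dt<\infty$, so the Green operator is Hilbert--Schmidt on $L^2$ of the measure $\D\,dt$; just add the routine check that this integral operator is well defined on $\D$-equivalence classes and really represents $(\wt T-\l)^{-1}$. What the paper's route buys in exchange is an explicit handle on the atoms of $\s_\tau$, which it reuses in (iii). Your (iii) is essentially the paper's proof: both funnel through Theorem \ref{th4.15} (the paper via Theorem \ref{th1.2}(ii) with boundary condition {\rm (bc2)}, $D_\tau=\t$, resp.\ $\tau\equiv\{0\}\oplus\bC$ when $\sin\b=0$) and then identify the resulting atomic sum with \eqref{1.14}; you do the identification via $v_k\propto\f(\cd,\l_k)$ and Parseval (giving $\s_k=\|\f(\cd,\l_k)\|_\D^{-2}$), while the paper verifies $\mathcal V_k=\wh y(\l_k)\s_k\f(\cd,\l_k)\in L_{\l_k}$ by computing $V_\s^*\bigl(\wh y(\l_k)\chi_{\{\l_k\}}\bigr)$ and appealing again to \eqref{4.33.20}; both are fine, and your reduction of $\cF'$ to the class $\cF$ (boundedness of $y$ on the compact $\cI$ plus the $\eta_\tau$ computation) matches the paper.

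The genuine gap is (ii). You correctly isolate the obstruction --- $\|y\|_\D$ carries no information about $y$ on $\{\D=0\}$, so $\int_\cI q|y|^2\,dt$ is not dominated by $\|y\|_\D^2$ --- but your proposed cure (``$l[y]=0$ a.e.\ on $\{\D=0\}$ \dots\ absorb after integration by parts'') is a heuristic, not an argument: from $l[y]=\l\D y$ one only learns that an eigenfunction solves the $\l$-independent equation on $\{\D=0\}$, and you never say how that absorbs the $q$-term there. For comparison, the paper also gives no estimate: it states only that (ii) ``can be proved in the same way as Theorem 5 in \cite[\S 19]{Nai}'' and notes in Remark \ref{rem4.18.6} that \cite{EKZ83} proved it by other methods. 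A completion along your lines is the classical oscillation/partition estimate: since $1/p\in L^1(\cI)$ and $p\geq 0$, one has $\bigl(\sup_{[c,d]}|y|-\inf_{[c,d]}|y|\bigr)^2\leq \|1/p\|_{L^1(c,d)}\int_c^d p|y'|^2\,dt$, and one bounds $\sup_\cI|y|^2$ by $\varepsilon\int_\cI p|y'|^2\,dt+C_\varepsilon\|y\|_\D^2$ using an anchor point $t_0$ with $|y(t_0)|^2\leq\|y\|_\D^2/\int_\cI\D\,dt$ and chaining oscillations across a partition; the point your sketch is silent on is exactly the semi-definite one --- $\D$ may vanish identically on whole subintervals, so anchor points exist only in pieces carrying $\D$-mass ($\mu(\D>0)\neq 0$ guarantees at least one), and the chaining must be organized around them. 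The boundary terms (e.g.\ $-\t|y(b)|^2$ after inserting \eqref{1.13}) are absorbed the same way. Either carry this out or, as the paper does, cite the classical argument explicitly; as written, your (ii) does not yet establish that $EV$ is bounded from below.
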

\begin{proof}
First we give the proof for the case $\sin\b\neq 0$. In this case \eqref{1.13} is equivalent to
\begin{gather}\label{4.33.17}
\cos\a\cd y(a) + \sin \a \cd y^{[1]}(a)=0, \qquad y^{[1]}(b)=\t y(b),
\end{gather}
where $y^{[1]}(t)$ is the same as in Theorem \ref{th1.2} and $\t=- {\rm ctg}\, \b$.

(i)  Let $U=(-\cos\a, -\sin\a)$, let $\f(\cd,\l)$ and $\psi(\cd,\l)$ be solutions of \eqref{1.7} from Theorem \ref{th1.2}  and let $\tau\in R[\bC]$ be given by $\tau (\l)\equiv \t(=\ov \t), \, \l\in\bC$. Then $\f(\cd,\l)=\f_U(\cd,\l)$ and by Theorem \ref{th1.2}, (i) the equality \eqref{1.10} with $\tau(\l)\equiv \t$  defines a function $m(\cd)=m_\tau(\cd)\in R[\bC]$ such that formula \eqref{1.11} gives a spectral function $\s (\cd)=\s_\tau(\cd)$ of the equation \eqref{1.7}. Since the function $m(\cd)$ is a quotient of two entire functions, it follows that $m(\cd)$ is a meromorphic function with the finite or countable set $\cP=\{\l_k\}_1^n\; (n\leq\infty)$ of poles, which lies in $\bR$ and has no finite limit points. Hence $\s(\cd)$ is a jump function with jumps $\s_k>0$ at points $\l_k\in\cP$.

Next assume that $S$ is a symmetric relation \eqref{4.15}. Then by \eqref{4.29} $S$ is a densely defined operator in $L_\D^2(\cI;\bC)$. Put
\begin{multline*}
\cL_*=\{y\in\dom l: \cos\a\cd y(a) + \sin \a \cd y^{[1]}(a)=0 \\
\text{ and $\;l[y]= \D f_y$ (a.e. on $\cI$) with some $f_y\in\cL_\D^2(\cI;\bC)$}\}
\end{multline*}
Then the adjoint $S^*$ of $S$ is given by
\begin{gather*}
\dom S^*=\{\pi_\D y:\, y\in \cL_*\}, \qquad S^*(\pi_\D y) =\pi_\D f_y, \; \; y\in\cL_*.
\end{gather*}
It follows from Proposition \ref{pr4.12.1}  that  equation \eqref{1.7} is $U$-definite. Therefore
\begin{gather}\label{4.33.20}
\ker (\pi_\D\up \cL_*)=\{0\}
\end{gather}
and combining of Proposition \ref{pr4.1} with Proposition \ref{pr3.9} and Remark \ref{rem4.11.1} implies that the equalities $\G_0(\pi_\D y)=y(b), \; \G_1(\pi_\D y)=-y^{[1]}(b) , \; y\in \cL_*,$
define a boundary triplet $\Pi=\{\bC,\G_0,\G_1\}$ for $S^*$. Let $\wt S_\tau$ be a self-adjoint extension of $S$ corresponding to $\tau (\l)\equiv \t$ (in the triplet $\Pi$) and let
\begin{gather}\label{4.33.21}
\cL_\tau=\{y\in\cL_*: y^{[1]}(b)=\t y(b)\}.
\end{gather}
Then by Theorem \ref{th2.6}, (ii) $\wt S_\tau$ is an operator in $L_\D^2(\cI;\bC)$ given by
\begin{gather}\label{4.33.22}
\dom \wt S_\tau=\{\pi_\D y:\, y\in \cL_\tau\}, \qquad \wt S_\tau (\pi_\D y) =\pi_\D f_y, \; \; y\in\cL_\tau.
\end{gather}
In the following we denote by $\Si(\wt S_\tau)$  spectrum of $\wt S_\tau$.

According to \cite{Mog15} the Fourier transform \eqref{4.18} defines a unitary operator $V_\s(\pi_\D y)= \wh y,\; y\in \cL_\D^2(\cI;\bC),$ acting  from $L_\D^2(\cI;\bC)$ onto $L_2(\s;\bC)$; moreover,
\begin{gather}\label{4.33.25}
V_\s^*  g=\pi_\D \left( \int_{\bR} \f(\cd,s) g(s)\, d\s(s)  \right), \quad g\in L_2(\s;\bC)
\end{gather}
and the operator $\wt S_\tau$ is unitarily equivalent to the multiplication operator $\L_\s$ in  $L_2(\s;\bC)$ by means of $V_\s$. Therefore $\Si (\wt S_\tau)=\cP=\{\l_k\}_1^n,\; n\leq\infty$, which implies that $\Si (\wt S_\tau)$ coincides with the set of all eigenvalues $\l_k$ of $\wt S_\tau$ and $\dim \ker (\wt S_\tau-\l_k)=1, \; \l_k\in \Si (\wt S_\tau)$. Moreover, it follows from \eqref{4.24} that $\dim L_\D^2(\cI;\bC)=\infty$ and hence the set $\Si (\wt S_\tau)$ is infinite (that is $n=\infty$).  Next, in view of \eqref{4.33.22} and \eqref{4.33.21} $\ker (\wt S_\tau-\l)=\pi_\D L_\l, \; \l\in\bC,$ and \eqref{4.33.20} implies that $\ker (\pi_\D\up L_\l)=\{0\}$. Hence $EV=\Si (\wt S_\tau)$ and $\dim L_\l=\dim\ker (\wt S_\tau -\l)=1, \; \l\in EV$. This proves statement (i).

Statement (ii) can be proved in the same way as Theorem 5 in \cite[\S 19]{Nai}.

(iii) Let $y\in\cF'$, so that \eqref{4.33.17} is satisfied with $\t=\ov\t$. Let as before $\tau(\cd)\in\ R[\bC]$ be given by $\tau(\l)\equiv \t$. Then \eqref{1.19} is satisfied, $D_\tau=\t$ and hence $y$ satisfies boundary conditions (bc2) in Theorem \ref{th1.2}, (ii). Let $\mathcal V_k(t)=\wh y(\l_k)\s_k \f(t,\l_k)$. Then by Theorem \ref{th1.2}, (ii)
\begin{gather*}
y(t)=\int_\bR \f(t,s)\wh y(s) \, d\s(s) =\sum_{k=1}^\infty \mathcal V_k(t),
\end{gather*}
where the series converges absolutely and uniformly on $\cI$. Now it remains to show that $\mathcal V_k\in L_{\l_k}$.

Since $\wt S_\tau$ and $\L_\s$ are unitarily equivalent by means of $V_\s$, it follows that $V_\s^*\, \dom \L_\s =\dom \wt S_\tau$. Moreover, $\wh y(\l_k)\chi_{\{\l_k\}}(\cd)\in  \dom \L_\s$ and by \eqref{4.33.25} $V_\s^* (\wh y(\l_k)\chi_{\{\l_k\}}(\cd))=\pi_\D \mathcal V_k$. Hence $\pi_\D \mathcal V_k \in \dom \wt S_\tau$ and by \eqref{4.33.22} $\pi_\D \mathcal V_k =\pi_\D y$ with some $y\in\cL_\tau (\subset \cL_*)$. On the other hand $\mathcal V_k \in \cL_*$ and \eqref{4.33.20} implies that $\mathcal V_k=y$. Thus $\mathcal V_k\in \cL_\tau$ and, consequently, $\mathcal V_k\in L_{\l_k}$.

In the case $\sin \b =0$ one proves the required statements in the same way by setting $\tau(\l)\equiv \{0\}\oplus\bC, \; \l\in\bC$.
\end{proof}
\begin{remark}\label{rem4.18.6}
Statement (ii) of Corollary \ref{cor4.18.5} was proved by other methods in \cite{EKZ83}. \end{remark}
\subsection{Example} Consider the scalar regular Sturm -Liouville  equation
\begin{gather}\label{4.35}
-y''=\l y,\qquad t\in \cI= [0,1], \quad \l\in\bC,
\end{gather}
on an interval $\cI=[0,1]$. Let
\begin{gather*}
\f(t,\l)=\cos (\sqrt \l \,t), \qquad \psi (t,\l)=\tfrac 1 {\sqrt \l} \sin (\sqrt \l\, t).
\end{gather*}
The immediate checking shows that $\f(\cd,\l)$ and $\psi (\cd,\l)$ are solutions of \eqref{4.35} with   $\f(0,\l)=1, \; \f '(0,\l)=0$ and $\psi (0,\l)=0, \; \psi'(0,\l)=1$. Hence  $\f(\cd,\l)$ and $\psi (\cd,\l)$ satisfy \eqref{1.8} with $\a=-\tfrac \pi 2$ and
\begin{gather*}
\f(1,\l)=\cos \sqrt \l, \quad \f'(1,\l)=-\sqrt \l \sin\sqrt\l, \quad \psi(1,\l)=\tfrac {\sin \sqrt \l} {\sqrt \l},\quad \psi'(1,\l)=\cos \sqrt \l.
\end{gather*}
Therefore by Theorem  \ref{th1.2}, (i) the equality
\begin{gather}\label{4.37}
m_\tau(\l)=(\tfrac {\sin \sqrt \l} {\sqrt \l}\cd\tau (\l)-\cos \sqrt \l)(\cos \sqrt \l\cd\tau (\l)+\sqrt \l \sin\sqrt\l)^{-1}.
\end{gather}
together with \eqref{1.11} describes in terms of the parameter $\tau\in \wh R[\bC]$ all spectral functions of the equation \eqref{4.35} with respect to the Fourier transform
\begin{gather}\label{4.39}
\wh y(s)=\int_{[0,1]} \cos (\sqrt s \,t)y(t)\, dt, \quad y(\cd)\in L^2 [0,1],\;\; s\in\bR.
\end{gather}
Let $\tau=\tau(\l)=\sqrt \l$ and let $\s(\cd)=\s_\tau(\cd)$ be the corresponding spectral function of \eqref{4.35}. Then by \eqref{4.37}
\begin{gather}\label{4.40}
m_\tau(\l)=\frac{\sin \sqrt \l-\cos \sqrt \l}{\sqrt\l(\cos \sqrt \l+\sin\sqrt\l)}, \quad \l\in\CR
\end{gather}
and \eqref{1.11} implies that $\s(\cd)\in AC((-\infty,0);\bR)$  and
\begin{gather}\label{4.41}
\s'(s)=\frac 1 \pi \im m_\tau(s)=\frac 2 {\pi \sqrt {-s}(e^{2\sqrt {-s}}+ e^{-2\sqrt {-s}} )}, \quad s\in (-\infty, 0).
\end{gather}
Moreover,  $m_\tau(\cd)$ is meromorphic on $\bC\setminus (-\infty,0)$ with poles $a_k\in (0,\infty)$ given by
\begin{gather}\label{4.42}
a_k=\pi^2 (k- \tfrac 1 4)^2,\quad k\in\bN .
\end{gather}
Hence $\s(s)$ is  constant on intervals $(0,a_1)$ and $(a_k,a_{k+1}), \; k\in\bN,$ with jumps $\s_k$ in $a_k$ given by
\begin{gather}\label{4.42.1}
\s_k=-\frac{(\sin \sqrt s-\cos \sqrt s)_{s=a_k}}{(\sqrt s(\cos \sqrt s+\sin\sqrt s))_{s=a_k}'}=- \frac{\sin \sqrt a_k-\cos \sqrt a_k}{\frac 1 2 (\cos \sqrt a_k-\sin \sqrt a_k)}=2.
\end{gather}
Note also that by \eqref{4.39}
\begin{gather}
\wh y(s)=\tfrac 1 2 \int_{[0,1]}\left(e^{\sqrt {-s}\,t}+e^{-\sqrt {-s}\,t}\right)y(t)\, dt, \quad s\in (-\infty,0)\label{4.43}\\
\wh y(a_k) =\int_{[0,1]} \cos (\pi (k- \tfrac 1 4)t)y(t)dt, \quad k\in\bN.\label{4.44}
\end{gather}
Now we are ready to prove the following assertion.
\begin{assertion}\label{ass4.18}
Let $y$ be a complex-valued function on $\cI=[0,1]$ such that  $y'$ is absolutely continuous on $\cI$, $y''\in L^2 (\cI)$ and $y'(0)=0, \; y(1)=y'(1)=0$. Then the function $y$ admits the representations
\begin{gather}
y(t)=\frac 1 \pi \int\limits_{(-\infty,0)} \frac {e^{\sqrt {-s}\,t}+e^{-\sqrt {-s}\,t}}{\sqrt{-s}(e^{2\sqrt {-s}}+ e^{-2\sqrt {-s}})} \wh y(s)\, ds +2\sum_{k=1}^\infty \a_k \cos (\pi (k- \tfrac 1 4) t)\label{4.45},
\end{gather}
where $\wh y(s) $ is given by \eqref{4.43} and
\begin{gather}\label{4.47}
\a_k=\int_{[0,1]} y(t) \cos (\pi (k- \tfrac 1 4) t), \quad k\in\bN.
\end{gather}
The integral and series in \eqref{4.45} converge absolutely for each $t\in \cI$ and uniformly on $\cI$.
\end{assertion}
\begin{proof}
Let a function $y(\cd)$ satisfies the assumption of the assertion. Since $\lim\limits_{y\to +\infty} \tfrac {\tau (iy)}{iy} =0$ and $\lim\limits_{y\to +\infty}  y\cd \im  \tau (iy)=\infty $, it follows that $y$ belongs to the set $\cF$ from Theorem \ref{th1.2}. Moreover,  the equality \eqref{1.22} takes the form
\begin{gather}\label{4.48}
y(t)=\int_{(-\infty,0)} \f_U(t,s) \s'(s)\wh y(s)\, ds +\sum_{k=1}^\infty \f_U (t,a_k)\s_k \a_k,
\end{gather}
where $\wh y(s)$ and $\a_k=\wh y(a_k)$ are given by \eqref{4.43} and \eqref{4.47}, $\s'(s)$ is given by \eqref{4.41},
\begin{gather*}
\f_U(t,s)=\cos (i\sqrt{-s}\,t)=\tfrac 1 2 \left(e^{\sqrt {-s}\,t}+e^{-\sqrt {-s}\,t}\right), \quad s\in (-\infty, 0), \;\; t\in [0,1]\\
\f_U(t,a_k)=\cos (\sqrt {a_k}\,t)=\cos (\pi (k- \tfrac 1 4) t)
\end{gather*}
and in view of  \eqref{4.42.1} $\s_k=2$. Now the required statement follows from Theorem \ref{th1.2}.
\end{proof}


\begin{thebibliography}{DHS}
\bibitem{AD12}
D.Z. Arov, H. Dym, \textit{Bitangential direct and inverse problems for
systems of integral and differential equations}, Encyclopedia of mathematics and its applications, Cambridge University Press, Cambridge, 2012.

\bibitem{Atk}
F.V. Atkinson, \textit{Discrete and continuous boundary problems}, Academic
Press, New York, 1963.


\bibitem{Bin02}
P. Binding and B. \'Curgus, \textit{Form domains and eigenfunction
expansions for differential equations with eigenparameter dependent boundary Conditions}, Canad. J. Math. \textbf{54} (2002), 1142--1164.

\bibitem{BinVol13}
P.Binding and H.Volkmer, \textit{A Pr\"ufer angle approach to semidefinite Sturm – Liouville problems with coupling boundary conditions}, J. Differential Equations \textbf{255} (2013),761--778.


\bibitem{Col}
L. Collatz, \textit{Eigenwertaufgaben mit technischen Anwendungen}, Akademische Verlagsgesellschaft Geest \& Portig, Leipzig, 1963.

\bibitem{DM06}
V. A. Derkach, S. Hassi, M. Malamud, and H. S.V. de Snoo,\textit{Boundary Relations and their Weyl families}, Trans. Amer. Math. Soc. \textbf{358} (2006), no.~12, 5351--5400.



\bibitem{DM91}
V.A.~Derkach and M.M.~Malamud, \textit {Generalized resolvents and
the boundary value problems for Hermitian operators with gaps}, J.
Funct. Anal. \textbf{95} (1991),1--95.

\bibitem{DajLan18}
A. Dijksma and H. Langer,\textit{Compressions of self-adjoint extensions of a symmetric operator and M.G. Krein’s resolvent formula}, Integr. Equ. Oper. Theory \textbf{90:41} (2018).

\bibitem{DunSch}
N. Dunford and J.T. Schwartz, \textit{Linear operators. Part2. Spectral
theory}, Interscience Publishers, New York-London, 1963.

\bibitem{EKZ83}
W. N. Everitt, M. K. Kwong and  A. Zettl, \textit {Oscillation of eigenfunctions of weighted regular Sturm-Liouville problems}, J. London Math. Soc.  \textbf{27} (1983), 106-120.

\bibitem{Ful77}
C. T. Fulton, \textit {Two-point boundary value problems with eigenvalue parameter contained in the
boundary conditions},  Proc. Roy. Soc. Edinburgh Sect. A \textbf{77}(1977), 293--308.

\bibitem{GorGor}
V.I.~Gorbachuk and M.L.~Gorbachuk, \textit{Boundary problems for
differential-operator equations}, Kluver Acad. Publ.,
Dordrecht-Boston-London, 1991. (Russian edition: Naukova Dumka,
Kiev, 1984).

\bibitem{Hin79}
D. B. Hinton, \textit {An expansion theorem for an eigenvalue problem with eigenvalue parameter in the
boundary condition}, Quart. J. Math. Oxford Ser. (2) \textbf{30} (1979), 33--42.

\bibitem{HinSch93}
D.B. Hinton and A. Schneider, \textit{On the Titchmarsh-Weyl coefficients for singular S-Hermitian systems}, Math. Nachr. \textbf{163}(1993), 323--342.

\bibitem{HinSha81}
D.B. Hinton and J.K. Shaw, \textit{On  Titchmarsh-Weyl $m(\l)$-functions for linear Hamiltonian systems}, J. Differ. Equations \textbf{40} (1981), 316 -- 342.



\bibitem{Kac69}
I. S. Kac, \textit{Compatibility of the coefficients of a generalized second order linear differential equation}, Math. USSR-Sb., \textbf{8} (1969), 345--356.

\bibitem{Kac71}
I. S. Kac,  \textit{Integral characteristics of the growth of spectral functions for generalized second order boundary problems with boundary conditions at a  regular end} Math. USSR-Izv., \textbf {5} (1971), 161--191.

\bibitem{Kac03}
I.S. Kats, \textit{Linear relations generated by the canonical differential equation of phase dimension 2, and eigenfunction expansion}, St. Petersburg Math. J.
\textbf{14}(2003), 429--452.

\bibitem{KacKre}
I. S. Kac and M. G. Krein, On spectral functions of a string. In: F. V. Atkinson, Discrete
and continuous boundary problems, Mir, Moscow, 1968; English transl., Amer. Math. Soc. Transl. (2) 103 (1974), 19--102.

\bibitem{KogRof75}
 V.I.~Kogan and F.S.~Rofe-Beketov, On square-integrable solutions of
 symmetric systems of  differential equations of arbitrary order, Proc. Roy.
Soc. Edinburgh Sect. A \textbf{74}, 5--40 (1974/75).



\bibitem{LanTex77}
H.~Langer and B.~Textorious, \textit{On generalized resolvents and
$Q$-functions of symmetric linear relations (subspaces) in Hilbert
space}, Pacif. J. Math.  \textbf{72}(1977), no.~1 , 135--165.

\bibitem{LesMal03}
M. Lesch, M.M. Malamud, \textit{On the deficiency indices and self-adjointness
of symmetric Hamiltonian systems}, J. Differential Equations \textbf{189}
(2003), 556--615.

\bibitem {Mal92}
 M. M. ~Malamud, \textit{On the formula of generalized resolvents of a nondensely defined Hermitian operator}, Ukr. Math. Zh.
\textbf{44}(1992), no.~ 12, 1658--1688.

\bibitem {MalMal03}
M.M. Malamud, S.M. Malamud, \textit{Spectral theory of operator measures in Hilbert space},
\newblock {St. Petersbg. Math. J.}, \textbf{15}(2003), no. 3, 323--373.

\bibitem{Mog12}
V.I.Mogilevskii, \textit{Boundary pairs and boundary conditions for general (not necessarily definite)
first-order symmetric systems with arbitrary deficiency indices}, Math. Nachr. \textbf{285} (2012), no 14--15, 1895--1931.

\bibitem{Mog15}
V.I.Mogilevskii, \textit{Spectral and pseudospectral  functions of  Hamiltonian systems: de\-ve\-lop\-ment of the results by Arov-Dym and Sakhnovich},  Methods  Funct. Anal.  Topology
\textbf{21} (2015), no. 4, 370--402.

\bibitem{Mog17}
V.I.Mogilevskii, \textit{Spectral and pseudospectral  functions of various dimensions for symmetric systems},  J. Math. Sci. \textbf{221} (2017), no. 5, 679--711.

\bibitem{Mog19}
V.I.Mogilevskii,\, \textit{ On compressions of self-adjoint extensions of a symmetric linear relation}, Integr. Equ. Oper. Theory (2019) 91:9.

\bibitem{Nai}
M.A.~Naimark, \textit{Linear differential operators, vol. 1 and 2}, Harrap, London, 1968.



\bibitem{Sah13}
A.L. Sakhnovich, L.A. Sakhnovich, and I.Ya. Roitberg, \textit{Inverse
problems and nonlinear evolution equations. Solutions, Darboux
matrices and Weyl-Titchmarsh functions}, De Gruyter Studies in
Mathematics 47. De Gruyter, Berlin, 2013.

\bibitem{Sht57}
A.V. \u{S}traus, \textit{On generalized resolvents and spectral functions of
differential operators of an even order}, Izv. Akad. Nauk. SSSR, Ser.Mat.,
\textbf{21}, (1957), 785--808.

\bibitem{Wei}
J. Weidmann, \textit{Spectral theory of ordinary differential
operators}, Lecture notes in mathematics, \textbf{1258},
Springer-Verlag,  Berlin, 1987.

\end{thebibliography}
\end{document}